\thanks{\copyright 2018 Texas State University.}
\begin{document}
\title[\hfilneg EJDE-2018/29\hfil
   properties of fractional differentiation operators]
{Spectral properties of fractional differentiation operators}

\author[M. V. Kukushkin
  \hfil EJDE-2018/29\hfilneg]
{Maksim V. Kukushkin}

\address{Maksim V. Kukushkin \newline
International Committee "Continental",
Geleznovodsk 357401, Russia}
\email{kukushkinmv@rambler.ru}

\dedicatory{Communicated by Ludmila S. Pulkina}

\thanks{Submitted October 10, 2017. Published January 29, 2018.}
\subjclass[2010]{47F05, 47F99, 46C05}
\keywords{Fractional derivative; fractional integral;    energetic space;
\hfill\break\indent sectorial operator;
strong accretive operator;}

\begin{abstract}
We consider   fractional differentiation operators in various senses and  show that the strictly accretive property
   is the common property  of fractional differentiation operators. Also we prove that the sectorial property holds for differential operators  second order with a fractional derivative in the  final term, we explore a location of the spectrum and  resolvent sets   and show that the  spectrum  is  discrete. We prove that there exists  a two-sided estimate for eigenvalues of the real component of operators second order with the fractional derivative in  the final term.
\end{abstract}

\maketitle
\numberwithin{equation}{section}
\newtheorem{theorem}{Theorem}[section]
\newtheorem{lemma}[theorem]{Lemma}
\newtheorem{remark}[theorem]{Remark}
\allowdisplaybreaks

\section{Introduction}
It is remarkable that the term accretive,  which applicable to a linear operator $T$ acting in   Hilbert space $H,$ is introduced by Friedrichs in the paper \cite{firstab_lit:fridrichs1958}, and means that the operator $T$ has the following property:   the numerical range $\Theta(T)$ (see \cite[p.335]{firstab_lit:kato1966}) is a subset of the right half-plane i.e.
    $$
    {\rm Re}\left( Tu,u\right)_{H}\geq0,\;u\in \mathfrak{D}(T).
    $$
Accepting  the   notation  of the paper  \cite{firstab_lit:kipriyanov1960} we assume that $\Omega$ is a  convex domain of the  $n$ -  dimensional Euclidean space $\mathbb{E}^{n}$, $P$ is a fixed point of the boundary $\partial\Omega,$
$Q(r,\mathbf{e})$ is an arbitrary point of $\Omega;$ we denote by $\mathbf{e}$   a unit vector having the direction from  $P$ to $Q,$ denote by $r=|P-Q|$   an Euclidean distance between the points $P$ and $Q.$ We use the shorthand notation    $T:=P+\mathbf{e}t,\,t\in \mathbb{R}.$
We   consider the Lebesgue  classes   $L_{p}(\Omega),\;1\leq p<\infty $ of  complex valued functions.  For the function $f\in L_{p}(\Omega),$    we have
\begin{equation}\label{1}
\int\limits_{\Omega}|f(Q)|^{p}dQ=\int\limits_{\omega}d\chi\int\limits_{0}^{d(\mathbf{e})}|f(Q)|^{p}r^{n-1}dr<\infty,
\end{equation}
where $d\chi$   is the element of the solid angle of
the unit sphere  surface (the unit sphere belongs to $\mathbb{E}^{n}$)  and $\omega$ is a  surface of this sphere,   $d:=d(\mathbf{e})$  is the length of the  segment of the  ray going from the point $P$ in the direction
$\mathbf{e}$ within the domain $\Omega.$
Without lose of   generality, we consider only those directions of $\mathbf{e}$ for which the inner integral on the right side of equality \eqref{1} exists and is finite. It is  the well-known fact that  these are almost all directions.
We denote by   ${\rm Lip}\, \mu,\;(0<\mu\leq1) $  the set of functions satisfying the Holder-Lipschitz condition
$$
{\rm Lip}\, \lambda:=\left\{\rho(Q):\;|\rho(Q)-\rho(P)|\leq M r^{\lambda},\,P,Q\in \bar{\Omega}\right\}.
$$
Consider the  Kipriyanov  fractional differential operator     defined in  the paper \cite{firstab_lit:1kipriyanov1960}  by  the formal expression
\begin{equation*}
\mathfrak{D}^{\alpha}(Q)=\frac{\alpha}{\Gamma(1-\alpha)}\int\limits_{0}^{r} \frac{[f(Q)-f(T)]}{(r - t)^{\alpha+1}} \left(\frac{t}{r} \right) ^{n-1} dt+
C^{(\alpha)}_{n} f(Q) r ^{ -\alpha}\!,\, P\in\partial\Omega,
\end{equation*}
where
$
C^{(\alpha)}_{n} = (n-1)!/\Gamma(n-\alpha).
$
In accordance with Theorem 2   \cite{firstab_lit:1kipriyanov1960}, under the assumptions
\begin{equation}\label{2}
 lp\leq n,\;0<\alpha<l- \frac{n}{p} +\frac{n}{q}, \,q>p,
\end{equation}
we have that
     for sufficiently small $\delta>0$ the following inequality holds
\begin{equation}\label{3}
\|\mathfrak{D}^{\alpha}f\|_{L_{q}(\Omega)}\leq \frac{K}{\delta^{\nu}}\|f\|_{L_{p}(\Omega)}+\delta^{1-\nu}\|f\|_{L^{l}_{p}(\Omega)},\, f\in\stackrel{\circ}{W_p ^l} (\Omega),
\end{equation}
where
\begin{equation}\label{4}
 \nu=\frac{n}{l}\left(\frac{1}{p}-\frac{1}{q} \right)+\frac{\alpha+\beta}{l}.
\end{equation}
The constant  $K$ does not depend on $\delta,\,f;$   the point $P\in\partial\Omega ;\;\beta$ is an arbitrarily small fixed positive number.
Further, we assume that $\alpha \in (0,1).$
Using the notation   of the paper  \cite{firstab_lit:samko1987}, we   denote  by $I_{a+}^{\alpha}(L_{p} ),\;I_{b-}^{\alpha}(L_{p} ),\;1\leq p\leq\infty$ the left-side, right-side  classes of functions representable by the fractional integral on the segment $[a,b]$ respectively.  Let $\mathfrak{d}:={\rm diam}\,\Omega ;\;C,C_{i}={\rm const},\,i\in \mathbb{N}_{0}.$ We use a shorthand  notation  $P\cdot Q=P^{i}Q_{i}=\sum^{n}_{i=1}P_{i}Q_{i}$ for the inner product of the points $P=(P_{1},P_{2},...,P_{n}),\,Q=(Q_{1},Q_{2},...,Q_{n})$ which     belong to  $\mathbb{E}^{n}.$
     Denote by  $D_{i}u$  the week  derivative of the function $u$ with respect to a coordinate variable with index   $1\leq i\leq n.$
We  assume that all functions have  a zero extension outside  of $\bar{\Omega}.$  Denote by  $  \mathrm{D} (L), \mathrm{R} (L)$    the domain of definition, range of values of the operator $L$ respectively.
Everywhere further,   unless  otherwise  stated,  we   use the notations of the papers    \cite{firstab_lit:kipriyanov1960}, \cite{firstab_lit:1kipriyanov1960},
\cite{firstab_lit:samko1987}.
Let us  define the operators
$$
 (\mathfrak{I}^{\alpha}_{0+}g)(Q  ):=\frac{1}{\Gamma(\alpha)} \int\limits^{r}_{0}\frac{g (T)}{( r-t)^{1-\alpha}}\left(\frac{t}{r}\right)^{n-1}dt,\,(\mathfrak{I}^{\alpha}_{d-}g)(Q  ):=\frac{1}{\Gamma(\alpha)} \int\limits_{r}^{d }\frac{g (T)}{(t-r)^{1-\alpha}}dt,
$$
$$
\;g\in L_{p}(\Omega),\;1\leq p\leq\infty.
$$
These  operators we call respectively   the left-side, right-side directional fractional integral.
We introduce   the classes of functions representable by the directional fractional integrals.
 \begin{equation}\label{5}
  \mathfrak{I}^{\alpha}_{0+}(L_{p}  ):=\left\{ u:\,u(Q)=(\mathfrak{I}^{\alpha}_{0+}g)(Q  ),\, g\in L_{p}(\Omega),\,1\leq p\leq\infty \right\},
 \end{equation}
\begin{equation}\label{6}
  \mathfrak{I }  ^{\alpha}_{ d   -} (L_{p}  ) =\left\{ u:\,u(Q)=(\mathfrak{I}^{\alpha}_{d-}g)(Q  ),\;g\in L_{p}(\Omega),\;1\leq p\leq\infty  \right\}.
 \end{equation}
Define the operators  $\psi^{+}_{\varepsilon },\; \psi^{-}_{\varepsilon }$ depended on the parameter $\varepsilon>0.$  In the left-side case
 \begin{equation}\label{7}
(\psi^{+}_{  \varepsilon }f)(Q)=  \left\{ \begin{aligned}
 \int\limits_{0}^{r-\varepsilon }\frac{ f (Q)r^{n-1}- f(T)t^{n-1}}{(  r-t)^{\alpha +1}r^{n-1}}  dt,\;\varepsilon\leq r\leq d  ,\\
   \frac{ f(Q)}{\alpha} \left(\frac{1}{\varepsilon^{\alpha}}-\frac{1}{ r ^{\alpha} }    \right),\;\;\;\;\;\;\;\;\;\;\;\;\;\;\; 0\leq r <\varepsilon .\\
\end{aligned}
 \right.
\end{equation}
In the right-side case
\begin{equation*}
 (\psi^{-}_{  \varepsilon }f)(Q)=  \left\{ \begin{aligned}
 \int\limits_{r+\varepsilon }^{d }\frac{ f (Q)- f(T)}{( t-r)^{\alpha +1}} dt,\;0\leq r\leq d -\varepsilon,\\
   \frac{ f(Q)}{\alpha} \left(\frac{1}{\varepsilon^{\alpha}}-\frac{1}{(d -r)^{\alpha} }    \right),\;\;\;d -\varepsilon <r \leq d ,\\
\end{aligned}
 \right.
 \end{equation*}
 where $\mathrm{D}(\psi^{+}_{  \varepsilon }),\mathrm{D}(\psi^{-}_{  \varepsilon })\subset L_{p}(\Omega).$
Using the definitions of the monograph  \cite[p.181]{firstab_lit:samko1987}  we consider the following operators.  In the left-side case
 \begin{equation}\label{8}
 ( \mathfrak{D} ^{\alpha}_{0+\!,\,\varepsilon}f)(Q)=\frac{1}{\Gamma(1-\alpha)}f(Q) r ^{-\alpha}+\frac{\alpha}{\Gamma(1-\alpha)}(\psi^{+}_{  \varepsilon }f)(Q).
 \end{equation}
 In the right-side case
 \begin{equation*}
 ( \mathfrak{D }^{\alpha}_{d-\!,\,\varepsilon}f)(Q)=\frac{1}{\Gamma(1-\alpha)}f(Q)(d-r)^{-\alpha}+\frac{\alpha}{\Gamma(1-\alpha)}(\psi^{-}_{  \varepsilon }f)(Q).
 \end{equation*}
 The left-side  and  right-side fractional derivatives  are  understood  respectively  as the  following  limits with respect to the norm  $L_{p}(\Omega),\,(1\leq p<\infty)$
 \begin{equation}\label{8.1}
 \mathfrak{D }^{\alpha}_{0+}f=\lim\limits_{\stackrel{\varepsilon\rightarrow 0}{ (L_{p}) }} \mathfrak{D }^{\alpha}_{0+\!,\,\varepsilon} f  ,\; \mathfrak{D }^{\alpha}_{d-}f=\lim\limits_{\stackrel{\varepsilon\rightarrow 0}{ (L_{p}) }} \mathfrak{D }^{\alpha}_{d-\!,\,\varepsilon} f .
\end{equation}
 We   need   auxiliary propositions, which  are  presented  in the next section.

 \section{     Some     lemmas and theorems}

We have the following theorem on boundedness of the directional fractional integral operators.
 \begin{theorem}\label{T1} The directional fractional integral operators are bounded in $L_{p}(\Omega),$ $1\leq p<\infty,$ the following estimates holds
  \begin{equation}\label{9}
 \| \mathfrak{I}^{\alpha}_{0 +}u\|_{L_{p}(\Omega)}\leq C\|u \|_{L_{p}(\Omega)},\;\|   \mathfrak{I} ^{\alpha}_{d -}u\|_{L_{p}(\Omega)}\leq C\|u \|_{L_{p}(\Omega)},\;C=  \mathfrak{d} ^{\alpha}/ \Gamma(\alpha+1)  .
 \end{equation}
 \end{theorem}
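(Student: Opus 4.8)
The plan is to reduce the multidimensional estimate to the classical one-dimensional bound for the Riemann--Liouville fractional integral, exploiting representation \eqref{1}, which turns $\|\cdot\|_{L_{p}(\Omega)}$ into an integration over the solid angle $\omega$ of a weighted integral along each ray. First I would fix an admissible direction $\mathbf{e}$ (one for which the inner integral in \eqref{1} is finite), abbreviate $d:=d(\mathbf{e})\le\mathfrak{d}$, and introduce the auxiliary function $\phi(t):=t^{(n-1)/p}\,g(P+\mathbf{e}t)$ on $(0,d)$, chosen so that $\int_{0}^{d}|\phi(t)|^{p}dt=\int_{0}^{d}|g(P+\mathbf{e}t)|^{p}t^{n-1}dt$, which is exactly the contribution of the ray to $\|g\|^{p}_{L_{p}(\Omega)}$. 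Rewriting $(\mathfrak{I}^{\alpha}_{0+}g)(Q)$ along the ray, the weight $(t/r)^{n-1}$ combines with the passage to $\phi$ to leave the factor $t^{(n-1)(1-1/p)}$ under the integral sign; since $t\le r$ and $(n-1)(1-1/p)\ge 0$, this factor is dominated by $r^{(n-1)(1-1/p)}$, and upon raising to the power $p$ and multiplying by the Jacobian weight $r^{n-1}$ from \eqref{1} the powers of $r$ cancel exactly. This yields the pointwise inequality $r^{n-1}\bigl|(\mathfrak{I}^{\alpha}_{0+}g)(Q)\bigr|^{p}\le\bigl|(I^{\alpha}_{0+}|\phi|)(r)\bigr|^{p}$, where $I^{\alpha}_{0+}$ denotes the ordinary one-dimensional left-sided Riemann--Liouville integral on $(0,d)$. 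For the right-sided operator the same substitution works, now using the elementary estimate $t^{-(n-1)/p}\le r^{-(n-1)/p}$ valid for $t\ge r$, and one obtains $r^{n-1}\bigl|(\mathfrak{I}^{\alpha}_{d-}g)(Q)\bigr|^{p}\le\bigl|(I^{\alpha}_{d-}|\phi|)(r)\bigr|^{p}$.

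Next I would invoke the classical bound $\|I^{\alpha}_{0+}\psi\|_{L_{p}(0,d)}\le\dfrac{d^{\alpha}}{\Gamma(\alpha+1)}\|\psi\|_{L_{p}(0,d)}$ together with its right-sided analogue; this follows at once from Young's convolution inequality, since, extending $\psi$ by zero, $I^{\alpha}_{0+}\psi$ is the convolution of $\psi$ with the kernel $s^{\alpha-1}\chi_{(0,d)}(s)/\Gamma(\alpha)$, whose $L_{1}$-norm equals $d^{\alpha}/\Gamma(\alpha+1)$. Integrating the one-dimensional inequality in $r$ over $(0,d)$, then over $\mathbf{e}\in\omega$, and using $d(\mathbf{e})\le\mathfrak{d}$ throughout, returns precisely $\|\mathfrak{I}^{\alpha}_{0+}g\|_{L_{p}(\Omega)}\le C\|g\|_{L_{p}(\Omega)}$ with $C=\mathfrak{d}^{\alpha}/\Gamma(\alpha+1)$, and in the same way for $\mathfrak{I}^{\alpha}_{d-}$; an application of Fubini's theorem along the way also justifies that the defining integrals converge for almost every $Q\in\Omega$.

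The computation is otherwise routine; the only delicate point is the bookkeeping of exponents — verifying that the power of $r$ gained by dominating $t^{(n-1)(1-1/p)}$ by $r^{(n-1)(1-1/p)}$ cancels against the Jacobian $r^{n-1}$ after the $p$-th power is taken — and keeping the monotonicity inequalities pointing the right way in the left-sided as opposed to the right-sided case, where the sign of the leftover power of $t$ is reversed.
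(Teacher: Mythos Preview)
Your argument is correct and delivers the stated constant $C=\mathfrak{d}^{\alpha}/\Gamma(\alpha+1)$. The paper follows a related but structurally different route: rather than slicing $\Omega$ into rays and reducing to the one-dimensional bound $\|I^{\alpha}_{0+}\psi\|_{L_{p}(0,d)}\le d^{\alpha}\Gamma(\alpha+1)^{-1}\|\psi\|_{L_{p}(0,d)}$ on each ray, the paper works globally on $\Omega$. After the substitution $\tau=r-t$ it discards the factor $\bigl((r-\tau)/r\bigr)^{n-1}\le 1$, extends the $\tau$-integral to $[0,\mathfrak{d}]$, and applies the generalized Minkowski inequality to interchange the $L_{p}(\Omega)$ norm with the $\tau$-integration, finishing with the estimate $\|g(\cdot-\tau\mathbf{e})\|_{L_{p}(\Omega)}\le\|g\|_{L_{p}(\Omega)}$ coming from the zero-extension convention. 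Your ray-wise reduction via $\phi(t)=t^{(n-1)/p}g(P+\mathbf{e}t)$ is more explicit about how the Jacobian weight $r^{n-1}$ is absorbed, and the monotonicity of $t\mapsto t^{(n-1)(1-1/p)}$ (resp.\ $t\mapsto t^{-(n-1)/p}$) handles the left- and right-sided cases in a pleasingly symmetric way; it also sidesteps any question about how the $L_{p}(\Omega)$ norm behaves under the radial shift $Q\mapsto Q-\tau\mathbf{e}$. In the end both arguments rest on the same $L_{1}$--$L_{p}$ convolution (Young/Minkowski) bound for the kernel $s^{\alpha-1}\chi_{(0,\mathfrak{d})}(s)/\Gamma(\alpha)$, which is why the constants coincide.
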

 \begin{proof}
Let us prove   first estimate  \eqref{9}, the proof of the second one   is absolutely analogous. Using the generalized Minkowski inequality, we have
$$
 \| \mathfrak{I}^{\alpha}_{0 +}u\|_{L_{p}(\Omega)}=\frac{1}{\Gamma(\alpha)}\left( \int\limits_{\Omega}\left|  \int\limits^{r}_{0}\frac{g (T)}{( r-t)^{1-\alpha}}\left(\frac{t}{r}\right)^{n-1}\!\!\!dt  \right|^{p}dQ\right)^{1/p} 
$$
$$
 =\frac{1}{\Gamma(\alpha)}\left( \int\limits_{\Omega}\left|  \int\limits^{r}_{0}\frac{g (Q-\tau  \mathbf{e})}{\tau^{1-\alpha}}\left(\frac{r-\tau}{r}\right)^{n-1}\!\!\!d\tau  \right|^{p}dQ\right)^{1/p} 
$$
$$
  \leq\frac{1}{\Gamma(\alpha)}\left( \int\limits_{\Omega}\left(  \int\limits^{\mathfrak{d}}_{0}\frac{|g (Q-\tau  \mathbf{e})|}{\tau^{1-\alpha}}d\tau  \right)^{p}dQ\right)^{1/p} 
  $$
  $$
\leq \frac{1}{\Gamma(\alpha)}  \int\limits^{\mathfrak{d}}_{0}\tau^{\alpha-1} d\tau \left( \int\limits_{\Omega}  |g (Q-\tau  \mathbf{e})|^{p} dQ  \right)^{1/p}\!\!\leq
 \frac{ \mathfrak{d} ^{\alpha}}{\Gamma(\alpha+1)}\,  \| u\|_{L_{p}(\Omega)}.
$$\end{proof}

\begin{theorem}\label{T2}
Suppose $f\in L_{p}(\Omega),$  there  exists   $\lim\limits_{\varepsilon\rightarrow  0}\psi^{+}_{  \varepsilon }f$ or $\lim\limits_{\varepsilon\rightarrow  0}\psi^{-}_{  \varepsilon }f$ with respect to the norm $L_{p}(\Omega),\,(1\leq p<\infty);$ then   $f\in \mathfrak{I} ^{\alpha}_{0 +}(L_{p}) $ or $f\in \mathfrak{I }^{\alpha}_{d -}(L_{p})$ respectively.
\end{theorem}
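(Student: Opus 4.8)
The plan is to reduce the assertion to its one-dimensional prototype along rays and then invoke the corresponding theory of \cite{firstab_lit:samko1987}; we treat the left-side case, the right-side one being obtained by the symmetric computation with $d-r$ in place of $r$. For a function $h$ on $\Omega$ set $\hat h(s):=h(P+\mathbf e s)\,s^{n-1}$, viewed as a function of $s\in(0,d(\mathbf e))$. A direct computation from the definitions \eqref{5}, \eqref{7}, \eqref{8} gives the identities
\begin{equation*}
(\mathfrak I^{\alpha}_{0+}g)(Q)=r^{1-n}(I^{\alpha}_{0+}\hat g)(r),\qquad
(\mathfrak D^{\alpha}_{0+,\varepsilon}f)(Q)=r^{1-n}(\mathbf D^{\alpha}_{0+,\varepsilon}\hat f)(r),
\end{equation*}
where $I^{\alpha}_{0+}$ and $\mathbf D^{\alpha}_{0+,\varepsilon}$ are the one-dimensional left Riemann--Liouville integral and truncated Marchaud derivative on $(0,d(\mathbf e))$ in the sense of \cite[p.181]{firstab_lit:samko1987}: the weight $(t/r)^{n-1}$ is absorbed exactly by the power $s^{n-1}$ carried by $\hat h$. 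In particular $\widehat{\mathfrak D^{\alpha}_{0+,\varepsilon}f}=\mathbf D^{\alpha}_{0+,\varepsilon}\hat f$, hence $(\mathfrak I^{\alpha}_{0+}\mathfrak D^{\alpha}_{0+,\varepsilon}f)(Q)=r^{1-n}(I^{\alpha}_{0+}\mathbf D^{\alpha}_{0+,\varepsilon}\hat f)(r)$; moreover, by \eqref{1}, the $L_p(\Omega)$-norm of $h$ equals the weighted norm $\|\hat h\|_{L_p((0,d(\mathbf e));\,r^{(n-1)(1-p)}dr)}$ integrated over $\omega$, and since $\Omega$ is bounded one has $f\in L_p(\Omega)\subset L_1(\Omega)$, so $\hat f\in L_1(0,d(\mathbf e))$ for almost every $\mathbf e$.

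Next I would upgrade the hypothesis. Let $\varphi=\lim_{\varepsilon\to0}\psi^{+}_{\varepsilon}f$ in $L_p(\Omega)$ and fix a small $\varepsilon_0$ with $\psi^{+}_{\varepsilon_0}f\in L_p(\Omega)$. On $\{r<\varepsilon_0\}$ the lower branch of \eqref{7} gives $f(Q)r^{-\alpha}=f(Q)\varepsilon_0^{-\alpha}-\alpha(\psi^{+}_{\varepsilon_0}f)(Q)$, while on $\{r\ge\varepsilon_0\}$ trivially $|f(Q)r^{-\alpha}|\le\varepsilon_0^{-\alpha}|f(Q)|$; since $f$ and $\psi^{+}_{\varepsilon_0}f$ lie in $L_p(\Omega)$, this shows $f(Q)r^{-\alpha}\in L_p(\Omega)$. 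By \eqref{8} the family $\mathfrak D^{\alpha}_{0+,\varepsilon}f$ then converges in $L_p(\Omega)$ to $g:=\tfrac{1}{\Gamma(1-\alpha)}f(Q)r^{-\alpha}+\tfrac{\alpha}{\Gamma(1-\alpha)}\varphi\in L_p(\Omega)$, and Theorem~\ref{T1} yields $\mathfrak I^{\alpha}_{0+}\mathfrak D^{\alpha}_{0+,\varepsilon}f\to\mathfrak I^{\alpha}_{0+}g$ in $L_p(\Omega)$. The decisive step is to prove independently that $\mathfrak I^{\alpha}_{0+}\mathfrak D^{\alpha}_{0+,\varepsilon}f(Q)\to f(Q)$ for a.e.\ $Q\in\Omega$; comparing with the previous convergence then forces $f=\mathfrak I^{\alpha}_{0+}g$ with $g\in L_p(\Omega)$, that is $f\in\mathfrak I^{\alpha}_{0+}(L_p)$ by \eqref{5}, which is the claim. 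By the reduction above this pointwise convergence is precisely the one-dimensional fact $(I^{\alpha}_{0+}\mathbf D^{\alpha}_{0+,\varepsilon}\hat f)(r)\to\hat f(r)$ a.e., valid for every $\hat f\in L_1(0,d(\mathbf e))$: after interchanging the order of integration (legitimate because the $\varepsilon$-truncation removes the singularity of the Marchaud kernel), $I^{\alpha}_{0+}\mathbf D^{\alpha}_{0+,\varepsilon}\hat f$ is the average of $\hat f$ against a nonnegative kernel that concentrates at the origin with total mass tending to $1$ as $\varepsilon\to0$, so the conclusion follows from the approximate-identity/Lebesgue differentiation theorem; this representation goes back to \cite{firstab_lit:samko1987}.

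I expect this decisive step to be the main obstacle: explicitly performing the Fubini interchange in the composition $I^{\alpha}_{0+}\mathbf D^{\alpha}_{0+,\varepsilon}$, extracting the averaging kernel in closed form, and checking its concentration and normalisation — this is the only part of the proof that genuinely uses the particular shape of the truncations \eqref{7}, \eqref{8} and does not reduce to an application of Theorem~\ref{T1}. A secondary technical point is keeping the $n$-dimensional and one-dimensional pictures consistent, in particular verifying via \eqref{1} that the ray-by-ray a.e.\ statement assembles into an a.e.\ statement on $\Omega$ and that the limits in \eqref{8.1} are interpreted compatibly. One may also bypass the reduction and run the kernel computation directly on $\Omega$; then the only new feature is the power weight $r^{(n-1)(1-p)}$ coming from \eqref{1} (trivial when $p=1$ or $n=1$), against which the averaging operator remains uniformly bounded because it commutes with dilations along rays exactly as in the unweighted setting.
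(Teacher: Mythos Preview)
Your approach is correct and coincides with the paper's in all essential respects. Both arguments rest on the same identity: after a Fubini interchange, $\mathfrak{I}^{\alpha}_{0+}\mathfrak{D}^{\alpha}_{0+,\varepsilon}f-f$ is a convolution along rays with the Samko kernel $\mathcal{K}(t)=\frac{\sin\alpha\pi}{\pi}\cdot\frac{t_{+}^{\alpha}-(t-1)_{+}^{\alpha}}{t}$, which is nonnegative with total mass one. Your ``decisive step'' is exactly what the paper carries out explicitly in \eqref{10}--\eqref{15}, so the obstacle you anticipate is indeed the heart of the proof in the paper as well.

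The packaging differs in two minor ways. First, the paper does the kernel computation directly in $\Omega$ rather than after your reduction $\hat h(s)=h(P+\mathbf e s)s^{n-1}$; the factor $(1-\varepsilon t/r)_{+}^{n-1}$ in \eqref{15} is precisely what your weight $s^{n-1}$ absorbs. Second, the paper proves $\mathfrak{I}^{\alpha}_{0+}\varphi^{+}_{\varepsilon}f\to f$ in $L_{p}(\Omega)$ (splitting into the regions of \eqref{18} and using mean continuity plus dominated convergence), whereas you prove almost-everywhere convergence via the approximate-identity/Lebesgue differentiation theorem and then identify limits by passing to a subsequence. Either mode suffices to conclude $f=\mathfrak{I}^{\alpha}_{0+}g$. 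Your explicit verification that $f(Q)r^{-\alpha}\in L_{p}(\Omega)$ from the lower branch of \eqref{7} is a detail the paper handles more tersely.
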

\begin{proof}
 Let $f\in L_{p}(\Omega)$ and $\lim\limits_{\stackrel{\varepsilon\rightarrow 0}{ (L_{p}) }}\psi^{+}_{  \varepsilon }f=\psi.$ Consider the function
$$
(\varphi^{+}_{ \varepsilon}f)(Q)=\frac{1}{\Gamma(1-\alpha)}\left\{\frac{f(Q)}{ r ^{\alpha}}+\alpha (\psi^{+}_{ \varepsilon }f)(Q) \right\}.
 $$
 Taking into account     \eqref{7}, we can easily prove  that $\varphi^{+}_{  \varepsilon }f\in L_{p}(\Omega).$ Obviously,    there  exists the limit
 $\varphi^{+}_{  \varepsilon }f\rightarrow \varphi\in L_{p}(\Omega),\,\varepsilon\downarrow 0.$ Taking into account Theorem \ref{T1}, we can  complete the proof,  if we  show that
 \begin{equation}\label{10.01}
 \mathfrak{I}^{\alpha}_{0+}\varphi^{+}_{ \varepsilon }f  \stackrel{L_{p}}{\rightarrow} f,\,\varepsilon\downarrow0.
 \end{equation}
 In the case  $(\varepsilon\leq r\leq d),$ we have
$$
(\mathfrak{I}^{\alpha}_{0 +}\varphi^{+}_{ \varepsilon }f)(Q)\frac{\pi r^{n-1}}{\sin\alpha\pi}=
 \int\limits_{\varepsilon}^{r}\frac{f (P+y\mathbf{e})y ^{n-1-\alpha}}{( r-y)^{1-\alpha} } dy 
$$
$$
  +\alpha\int\limits_{\varepsilon}^{r}( r-y)^{\alpha-1}  dy\int\limits_{0 }^{y-\varepsilon }\frac{ f (P+y\mathbf{e})y^{n-1}- f(T)t^{n-1}}{( y-t )^{\alpha +1}} dt 
$$
$$
 +\frac{1}{\varepsilon^{\alpha}}\int\limits_{0}^{\varepsilon  }f (P+y\mathbf{e})( r-y)^{\alpha-1} y^{n-1}  dy =I.
$$
By direct calculation, we obtain
 \begin{equation}\label{10}
I=   \frac{1}{\varepsilon^{\alpha}}\int\limits_{0}^{r  }f (P+y\mathbf{e})( r-y)^{\alpha-1}y^{n-1}   dy  -
 \alpha\int\limits_{\varepsilon}^{r}( r-y)^{\alpha-1} dy\int\limits_{0 }^{y-\varepsilon }\frac{  f(T)}{(  y-t)^{\alpha +1}}t^{n-1} dt .
\end{equation}
 Changing the variable of integration in  the second  integral,   we have
$$
 \alpha\int\limits_{\varepsilon}^{r}( r-y)^{\alpha-1} dy\int\limits_{0 }^{y-\varepsilon }\frac{  f(T)}{(  y-t)^{\alpha +1}}t^{n-1} dt 
$$
$$
 =\alpha\int\limits_{0}^{r-\varepsilon}( r-y-\varepsilon)^{\alpha-1} dy\int\limits_{0 }^{y  }\frac{  f(T)}{(  y+\varepsilon-t)^{\alpha +1}}t^{n-1} dt  
$$
\begin{equation}\label{11}
=\alpha\int\limits_{0}^{r-\varepsilon}f(T)t^{n-1} dt\int\limits_{t }^{r-\varepsilon  }\frac{( r-y-\varepsilon)^{\alpha-1}   }{(  y+\varepsilon-t)^{\alpha +1}}dy  
$$
$$
=\alpha\int\limits_{0}^{r-\varepsilon}f(T)t^{n-1} dt\int\limits_{t +\varepsilon}^{r  } ( r-y )^{\alpha-1}   (  y -t)^{-\alpha -1} dy .
\end{equation}
Applying formula   (13.18) \cite[p.184]{firstab_lit:samko1987}, we get
\begin{equation}\label{12}
 \int\limits_{t +\varepsilon}^{r  } ( r-y )^{\alpha-1}   (  y -t)^{-\alpha -1} dy=
  \frac{1}{\alpha \varepsilon^{\alpha}}\cdot\frac{(r-t-\varepsilon)^{\alpha}}{ r-t }.
\end{equation}
 Combining    relations \eqref{10},\eqref{11},\eqref{12}, using the change of the variable     $t=r-\varepsilon\tau, $ we get
$$
(\mathfrak{I}^{\alpha}_{0 +}\varphi^{+}_{ \varepsilon }f)(Q)\frac{\pi r^{n-1}}{\sin\alpha\pi}  
$$
$$
=\frac{1}{\varepsilon^{\alpha}}\left\{ \int\limits_{0}^{r  }f (P+y\mathbf{e})(r-y)^{\alpha-1}y^{n-1}   dy- \int\limits_{0}^{r-\varepsilon  } \frac{f(T)(r-t-\varepsilon)^{\alpha}}{ r-t } t^{n-1}dt \right\} 
$$
\begin{equation}\label{13}
=\frac{1}{ \varepsilon^{\alpha}} \int\limits_{0 }^{r  } \frac{f(T)\left[(r -t)^{\alpha}-(r-t-\varepsilon)_{+}^{\alpha}\right]}{ r-t }t^{n-1}dt 
$$
$$
=\int\limits_{0 }^{r/\varepsilon   }\frac{\tau^{\alpha}-(\tau-1)_{+}^{\alpha}}{ \tau } f(P+[r-\varepsilon \tau ]\mathbf{e})(r-\varepsilon \tau)^{n-1} d\tau,\;\tau_{+}=\left\{\begin{array}{cc}\tau,\;\tau\geq0;\\[0,25cm] 0,\;\tau<0\,.\end{array}\right. .
 \end{equation}
Consider the auxiliary function $\mathcal{K}$ defined in the paper   \cite[p.105]{firstab_lit:samko1987}
 \begin{equation}\label{14}
 \mathcal{K}(t)= \frac{\sin\alpha\pi}{\pi }\cdot\frac{ t_{+}^{\alpha}-(t-1)_{+}^{\alpha}}{ t }\in L_{p}(\mathbb{R}^{1});\; \int\limits_{0 }^{\infty  }\mathcal{K}(t)dt=1;\;\mathcal{K}(t)>0.
\end{equation}
  Combining  \eqref{13},\eqref{14} and taking into account that    $f$ has the zero extension outside of  $\bar{\Omega},$ we obtain
 \begin{equation}\label{15}
 (\mathfrak{I}^{\alpha}_{0+}\varphi^{+}_{ \varepsilon }f)(Q)-f(Q) =  \int\limits_{0 }^{\infty  }\mathcal{K}(t) \left\{f(P+[r-\varepsilon t]\mathbf{e})(1-\varepsilon t/r)_{+}^{n-1}-f(P+ r \mathbf{e})  \right\}dt.
\end{equation}
Consider the case  $(0\leq  r <\varepsilon).$ Taking into account \eqref{7}, we get
 \begin{equation}\label{16}
(\mathfrak{I}^{\alpha}_{0+}\varphi^{+}_{ \varepsilon }f)(Q)-f (Q) =
\frac{\sin\alpha\pi}{\pi\varepsilon^{\alpha}} \int\limits_{0}^{r }\frac{f (T)}{(r-t)^{1-\alpha} } \left(\frac{t}{r} \right)^{n-1} dt-f (Q) 
$$
$$
=\frac{\sin\alpha\pi}{\pi\varepsilon^{\alpha}} \int\limits_{0}^{r }\frac{f (P+[r-t]\mathbf{e})}{t^{1-\alpha} }\left(\frac{r-t}{r} \right)^{n-1} dt-f (Q).
\end{equation}
Consider the domains
 \begin{equation}\label{17}
 \Omega_{\varepsilon} :=\{Q\in\Omega,\,d(\mathbf{e})\geq\varepsilon \},\;\tilde{\Omega}_{  \varepsilon }=\Omega\setminus \Omega_{\varepsilon}.
 \end{equation}
In accordance with    this definition  we can  divide the surface $\omega$ into two  parts $ \omega_{\varepsilon}$ and  $\tilde{\omega}_{ \varepsilon },$ where $ \omega_{\varepsilon}$ is  the subset of  $\omega$ such that   $d(\mathbf{e})\geq\varepsilon$ and $\tilde{\omega}_{ \varepsilon }$ is  the subset of  $\omega$ such that  $d(\mathbf{e}) <\varepsilon.$
Using  \eqref{15},\eqref{16}, we get
\begin{equation}\label{18}
  \|(\mathfrak{I}^{\alpha}_{0+}\varphi^{+}_{ \varepsilon }f) -f\|^{p}_{L_{p}(\Omega)}  
$$
$$
=   \int\limits_{\omega_{\varepsilon}}d\chi\int\limits_{\varepsilon}^{ d}
  \left|\int\limits_{0 }^{\infty  }\mathcal{K}(t)[f(Q- \varepsilon t \mathbf{e})(1-\varepsilon t/r)_{+}^{n-1}-f(Q)]dt\right|^{p}r^{n-1}dr    
$$
$$
+ \int\limits_{\omega_{\varepsilon}}d\chi\int\limits_{0}^{\varepsilon }
\left|  \frac{\sin\alpha\pi}{\pi\varepsilon^{\alpha}}\int\limits_{0}^{r }
\frac{f (P+[r-t]\mathbf{e})}{t^{1-\alpha} }\left(\frac{r-t}{r} \right)^{n-1} dt-f (Q)\right|^{p}r^{n-1}dr 
$$
$$
+  \int\limits_{\tilde{\omega}_{\varepsilon} }d\chi\int\limits_{0}^{d }
\left| \frac{\sin\alpha\pi}{\pi\varepsilon^{\alpha}}\int\limits_{0}^{r }\frac{f (P+[r-t]\mathbf{e})}{t^{1-\alpha} }\left(\frac{r-t}{r} \right)^{n-1} dt-f (Q) \right|^{p}r^{n-1}dr =I_{1}+I_{2}+I_{3}.
 \end{equation}
Consider $ I_{1},$ using the generalized Minkovski  inequality,  we get
$$
  I^{\frac{1 }{p}}_{1} \leq  \int\limits_{0 }^{\infty  }\mathcal{K}(t)
  \left(\int\limits_{\omega_{\varepsilon} }d\chi\int\limits_{\varepsilon}^{ d}
  |f(Q- \varepsilon t \mathbf{e})(1-\varepsilon t/r)_{+}^{n-1}-f(Q)|^{p}r^{n-1}dr \right)^{\frac{1 }{p}} dt.
$$
We use the following  notation
$$
 h(\varepsilon,t):= \mathcal{K}(t)\left(\int\limits_{\omega_{\varepsilon} }d\chi\int\limits_{\varepsilon}^{ d}
  |f(Q- \varepsilon t \mathbf{e})(1-\varepsilon t/r)_{+}^{n-1}-f(Q)|^{p}r^{n-1}dr \right)^{\frac{1 }{p}} dt.
$$
It can easily be checked that
\begin{equation}\label{19}
  |h(\varepsilon,t)|\leq 2\mathcal{K}(t) \| f\|_{L_{p}(\Omega)},\;\forall\varepsilon>0;
\end{equation}
  \begin{equation*}
  |h(\varepsilon,t)|\leq  \left(\int\limits_{\omega_{\varepsilon} }d\chi\int\limits_{\varepsilon}^{ d}
 \left |(1-\varepsilon t/r)_{+}^{n-1}[f(Q- \varepsilon t \mathbf{e})-f(Q)]\right|^{p}r^{n-1}dr \right)^{\frac{1 }{p}} dt 
 $$
 $$+\left(\int\limits_{\omega_{\varepsilon} }d\chi\int\limits_{0}^{ d}
  \left|f(Q)  [1-(1-\varepsilon t/r)_{+}^{n-1}]\right|^{p}r^{n-1}dr \right)^{\frac{1 }{p}} dt=I_{11}+I_{12}.
\end{equation*}
By virtue of the average continuity property  in  $L_{p}(\Omega),$  we have $\forall t>0:\, I_{11}\rightarrow 0,\;\varepsilon\downarrow 0.$
Consider $I_{12}$ and  let us define the function
$$
h_{1}(\varepsilon,t,r):=\left|f(Q)  [1-(1-\varepsilon t/r)_{+}^{n-1}]\right|.
$$ Obviously, the following relations hold  almost everywhere  in $\Omega$
$$
\forall t>0,\,h_{1}(\varepsilon,t,r) \leq |f(Q)|,\;h_{1}(\varepsilon,t,r)\rightarrow 0,\;\varepsilon \downarrow0.
$$
Applying the Lebesgue  dominated convergence theorem, we get $I_{12}\rightarrow 0,\;\varepsilon\downarrow 0.$
It implies that
\begin{equation}\label{20}
\forall t>0,\,\lim\limits_{\varepsilon\rightarrow 0} h(\varepsilon,t)=0.
\end{equation}
Taking into account  \eqref{19}, \eqref{20} and    applying  the Lebesgue  dominated convergence theorem again, we obtain
$$
 I_{1}\rightarrow 0,\;\;\varepsilon\downarrow0  .
$$
Consider $I_{2},$ using the Mincovski  inequality, we  get
$$
I^{\frac{1 }{p}}_{2} \leq \frac{\sin\alpha\pi}{\pi\varepsilon^{\alpha}}\left( \int\limits_{\omega_{\varepsilon} }d\chi\int\limits_{0}^{\varepsilon }
\left| \int\limits_{0}^{r }\frac{f (Q-t\mathbf{e})}{t^{1-\alpha} }\left(\frac{r-t}{r} \right)^{n-1} dt\right|^{p}r^{n-1}dr\right)^{\frac{1 }{p}}
$$
$$
+\left(\int\limits_{\omega_{\varepsilon} }d\chi\int\limits_{0}^{\varepsilon}
\left| f (Q) \right|^{p}r^{n-1}dr \right)^{\frac{1 }{p}}=I_{21} +I_{22}.
$$
Applying the generalized  Mincovski  inequality, we obtain
$$
I_{21}\frac{\pi}{\sin\alpha\pi}=\frac{1}{\varepsilon^{\alpha}}\left(\int\limits_{\omega_{\varepsilon} }d\chi
\int\limits_{0}^{\varepsilon }
\left|  \int\limits_{0}^{r }\frac{f (Q-t\mathbf{e})}{t^{1-\alpha} }\left(\frac{r-t}{r} \right)^{n-1} \!\!\! dt\right|^{p}r^{n-1}\! dr \right)^{\frac{1 }{p}} 
$$
$$
\leq\frac{1}{\varepsilon^{\alpha}}\left\{\int\limits_{\omega_{\varepsilon} }\!\!\left[\int\limits_{0}^{\varepsilon }\!\!t ^{\alpha-1 }\!\!
\left( \int\limits_{t}^{\varepsilon }\!\!|f (Q -t \mathbf{e})|^{p}\!\left(\frac{r-t}{r} \right)^{\!\!\!(p-1)(n-1)}\!\!\!(r-t)^{n-1} \!dr  \right)^{\frac{1 }{p}}\!\!dt\right]^{p}\!\!d\chi \right\}^{\frac{1 }{p}} 
 $$
 $$
\leq\frac{1}{\varepsilon^{\alpha}}\left\{\int\limits_{\omega_{\varepsilon} }\left[\int\limits_{0}^{\varepsilon } t ^{\alpha-1 }
\left( \int\limits_{t}^{\varepsilon } \left|f (P+[r-t]\mathbf{e})\right|^{p}  (r-t)^{n-1}dr  \right)^{\frac{1 }{p}}\!\!dt\right]^{p}\!\!d\chi \right\}^{\frac{1 }{p}} 
 $$
  $$
\leq\frac{1}{\varepsilon^{\alpha}}\left\{\int\limits_{\omega_{\varepsilon} }\left[\int\limits_{0}^{\varepsilon } t ^{\alpha-1 }
\left( \int\limits_{0}^{\varepsilon } |f (P +r \mathbf{e})|^{p}  r^{n-1}dr  \right)^{\frac{1 }{p}}\!\!dt\right]^{p}\!\!d\chi \right\}^{\frac{1 }{p}}\!\!=\frac{1}{\alpha}\| f\| _{L_{p}( \Delta_{\varepsilon})},\;
$$
$$
\Delta_{\varepsilon} :=\{Q\in\Omega_{\varepsilon},\,r<\varepsilon \} .
 $$
Note that   ${\rm mess}\, \Delta_{\varepsilon}\rightarrow 0,\,\varepsilon\downarrow0,$
hence  $I_{21},I_{22}\rightarrow 0,\,\varepsilon\downarrow0 .$ It follows  that $I_{2 }\rightarrow 0,\,\varepsilon\downarrow0.$ In the same way, we obtain   $I_{3 }\rightarrow 0,\,\varepsilon\downarrow 0.$ Since we proved that $I_{1},I_{2},I_{3}\rightarrow 0,\,\varepsilon\downarrow0,$ then relation \eqref{10.01} holds.
 This completes the proof corresponding to the left-side case.
 The proof corresponding to the right-side case is absolutely analogous.
\end{proof}
\begin{theorem}\label{T3}
Suppose  $f=\mathfrak{I}^{\alpha}_{0+} \psi$ or $f= \mathfrak{I} ^{\alpha}_{d -} \psi ,\;\psi\in L_{p}(\Omega),\;1\leq p<\infty;$ then
$
\, \mathfrak{D }^{\alpha}_{0+}f =\psi
$
or
$
 \mathfrak{D} ^{\alpha}_{d-}f =\psi
$
respectively.
 \end{theorem}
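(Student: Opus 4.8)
The plan is to proceed by direct computation: substitute $f=\mathfrak{I}^{\alpha}_{0+}\psi$ into the truncated operator $\mathfrak{D}^{\alpha}_{0+,\varepsilon}$ of \eqref{8}, show that the result is a mollification of $\psi$ that converges to $\psi$ in $L_{p}(\Omega)$ as $\varepsilon\downarrow 0$, and invoke \eqref{8.1}; the right-side case is entirely analogous. Throughout I use the one-dimensional reading of the directional integral along almost every ray,
$$
(\mathfrak{I}^{\alpha}_{0+}\psi)(Q)=\frac{1}{\Gamma(\alpha)\,r^{n-1}}\int\limits_{0}^{r}\psi(P+y\mathbf{e})\,y^{n-1}(r-y)^{\alpha-1}dy ,
$$
so that the weight $y^{n-1}$ amounts to a redefinition of the density along the ray.

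First I would fix a direction $\mathbf{e}$ with $d(\mathbf{e})\geq\varepsilon$ and work in the range $\varepsilon\leq r\leq d$. Writing $\mathfrak{D}^{\alpha}_{0+,\varepsilon}f=\frac{1}{\Gamma(1-\alpha)}f(Q)r^{-\alpha}+\frac{\alpha}{\Gamma(1-\alpha)}(\psi^{+}_{\varepsilon}f)(Q)$ and inserting the integral representation of $f$, I interchange the order of integration (Fubini's theorem, legitimate since $\psi\in L_{p}(\Omega)$ and the truncation at $\varepsilon$ keeps all singularities off the diagonal), and collapse the inner integration via the Euler-type beta integral already used in \eqref{12}, i.e. formula (13.18) of \cite[p.184]{firstab_lit:samko1987}. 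Collecting terms and performing the substitution $y=r-\varepsilon\tau$ as in the passage leading to \eqref{13}, I expect a representation
$$
(\mathfrak{D}^{\alpha}_{0+,\varepsilon}f)(Q)=\int\limits_{0}^{r/\varepsilon}\mathcal{L}(\tau)\,\psi\bigl(P+[r-\varepsilon\tau]\mathbf{e}\bigr)\Bigl(\frac{r-\varepsilon\tau}{r}\Bigr)^{n-1}d\tau ,
$$
with $\mathcal{L}$ the analogue of the kernel $\mathcal{K}$ of \eqref{14}, namely $\mathcal{L}\geq 0$, $\mathcal{L}\in L_{1}(0,\infty)$, $\int_{0}^{\infty}\mathcal{L}(\tau)d\tau=1$. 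Using the zero extension of $\psi$ outside $\bar\Omega$ this gives, in full parallel to \eqref{15},
$$
(\mathfrak{D}^{\alpha}_{0+,\varepsilon}f)(Q)-\psi(Q)=\int\limits_{0}^{\infty}\mathcal{L}(\tau)\Bigl\{\psi\bigl(P+[r-\varepsilon\tau]\mathbf{e}\bigr)(1-\varepsilon\tau/r)_{+}^{n-1}-\psi(P+r\mathbf{e})\Bigr\}d\tau .
$$

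From here the convergence argument copies the one in Theorem \ref{T2}. I split $\|\mathfrak{D}^{\alpha}_{0+,\varepsilon}f-\psi\|^{p}_{L_{p}(\Omega)}$ into the contributions of $\{d(\mathbf{e})\geq\varepsilon,\ r\geq\varepsilon\}$, of $\{d(\mathbf{e})\geq\varepsilon,\ r<\varepsilon\}$, and of $\{d(\mathbf{e})<\varepsilon\}$, exactly as in \eqref{18}. For the first piece, the generalized Minkowski inequality, the pointwise bound $\leq 2\mathcal{L}(\tau)\|\psi\|_{L_{p}(\Omega)}$, the average continuity of translation in $L_{p}$, and the Lebesgue dominated convergence theorem force it to $0$, just as for $I_{1}$. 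The remaining two pieces come from the boundary branches of \eqref{7}, \eqref{8}, which are supported on sets of measure tending to $0$; the very estimates used to send $I_{2},I_{3}\to 0$ in Theorem \ref{T2} apply verbatim. Thus $\mathfrak{D}^{\alpha}_{0+,\varepsilon}f\to\psi$ in $L_{p}(\Omega)$, i.e. $\mathfrak{D}^{\alpha}_{0+}f=\psi$.

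The main obstacle is the computational step: carrying out the Fubini interchange cleanly in the presence of the radial weight $(t/r)^{n-1}$, which is absent in the classical identity $\mathfrak{D}^{\alpha}_{0+}\mathfrak{I}^{\alpha}_{0+}=\mathrm{Id}$, and verifying that the resulting kernel $\mathcal{L}$ has the three approximate-identity properties that drive the limit. Once the mollifier representation is established, the rest is bookkeeping identical to the proof of Theorem \ref{T2}.
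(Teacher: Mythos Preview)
Your plan is correct and matches the paper's proof almost exactly: substitute $f=\mathfrak{I}^{\alpha}_{0+}\psi$ into $\mathfrak{D}^{\alpha}_{0+,\varepsilon}$, reduce by Fubini to a mollifier acting on $\psi$, and then repeat the three-piece $I_{1},I_{2},I_{3}$ limit argument from Theorem~\ref{T2}. Two small remarks: the kernel you call $\mathcal{L}$ turns out to be \emph{the same} $\mathcal{K}$ of \eqref{14}, and in the paper the inner integral is collapsed via formula (6.12) of \cite[p.106]{firstab_lit:samko1987} (after first rewriting $r^{n-1}f(Q)-(r-\tau)^{n-1}f(Q-\tau\mathbf{e})$ through a kernel $k$) rather than the beta identity (13.18) you cite, but either computation lands on the same mollifier representation.
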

\begin{proof}
Consider
$$
r^{n-1}f(Q)-(r-\tau)^{n-1}f(Q-\tau\mathbf{e}) 
$$
$$
=\frac{1}{\Gamma(\alpha)} \int\limits_{0}^{ r}\frac{\psi (Q-t\mathbf{e})}{t^{1-\alpha}}\left( r-t \right)^{n-1}dt-\frac{1}{\Gamma(\alpha)} \int\limits_{\tau}^{ r}\frac{\psi (Q-t\mathbf{e})}{(t-\tau)^{1-\alpha}}\left( r-t \right)^{n-1}dt 
$$
$$
 =\tau^{\alpha-1}  \int\limits_{0}^{ r} \psi (Q-t\mathbf{e}) k\left(\frac{t}{\tau}\right)(r-t)^{n-1}dt,\;k(t)= \frac{1}{\Gamma(\alpha)}\left\{\begin{array}{cc}t^{\alpha-1},\;0<t<1;\\[0,25cm] t^{\alpha-1}-(t-1)^{\alpha-1},\;t>1.\end{array}\right.
$$
Hence in the case    $(\varepsilon\leq r\leq d),$ we have
$$
(\psi^{+}_{  \varepsilon }f)(Q)=\int\limits_{ \varepsilon }^{ r }\frac{r^{n-1} f(Q)-(r-\tau)^{n-1}f(Q-\tau\mathbf{e})}{ r^{n-1}\tau ^{\alpha +1}} d\tau 
$$
$$
=\int\limits_{ \varepsilon }^{ r } \tau^{-2} d\tau\int\limits_{0}^{ r} \psi (Q-t\mathbf{e}) k\left(\frac{t}{\tau}\right)\left( 1-t/r \right)^{n-1}dt  
$$
$$
=\int\limits_{ 0 }^{ r }\psi (Q-t\mathbf{e}) \left( 1-t/r \right)^{n-1}  dt\int\limits_{\varepsilon}^{ r}  k\left(\frac{t}{\tau}\right) \tau^{-2}d \tau   
$$
$$
=\int\limits_{ 0 }^{ r }\psi (Q-t\mathbf{e}) \left( 1-t/r \right)^{n-1}t^{-1} dt\int\limits_{ t/ r  }^{t/\varepsilon}  k (s )ds.
$$
Applying   formula    (6.12) \cite[p.106]{firstab_lit:samko1987}, we get
$$
(\psi^{+}_{  \varepsilon }f)(Q)\cdot\frac{\alpha}{\Gamma(1-\alpha)}=\int\limits_{ 0 }^{ r }\psi (Q-t\mathbf{e})
 \left( 1-t/r \right)^{n-1}\left[ \frac{1}{\varepsilon}\mathcal{K}\left(\frac{t}{\varepsilon}\right) - \frac{1}{ r}\mathcal{K}\left(\frac{t}{ r}\right) \right]dt.
$$
Since in accordance with \eqref{14},  we have
 $$
\mathcal{K}\left(\frac{t}{ r}\right)=\left[\Gamma(1-\alpha)\Gamma(\alpha) \right]^{-1} \left(\frac{t}{ r}\right)^{\alpha-1}\!\!\!,
$$
then
$$
(\psi^{+}_{  \varepsilon }f)(Q)\cdot\frac{\alpha}{\Gamma(1-\alpha)}=\int\limits_{ 0 }^{ r /\varepsilon }\mathcal{K} ( t   ) \psi (Q-\varepsilon t\mathbf{e})\left( 1-\varepsilon t/r \right)^{n-1}
  dt-\frac{f(Q)}{\Gamma(1-\alpha) r ^{ \alpha}}.
$$
 Taking into account \eqref{8},\eqref{14}, and that the function     $\psi(Q)$ has the zero  extension   outside of   $\bar{\Omega},$   we obtain
$$
 ( \mathfrak{D} ^{\alpha}_{0+,\varepsilon}f)(Q)-\psi(Q) =\int\limits_{ 0 }^{\infty}
\mathcal{K} ( t   ) \left[\psi (Q - \varepsilon t \mathbf{e})(1-\varepsilon t/r)_{+}^{n-1}-\psi (Q) \right]dt,\;\varepsilon\leq r\leq d  .
$$
Consider the case    $ ( 0\leq r<\varepsilon).$ In accordance with \eqref{7},  we have
$$
( \mathfrak{D} ^{\alpha}_{0+,\varepsilon}f)(Q)-\psi(Q)=\frac{f(Q)}{\varepsilon^{\alpha}\Gamma(1-\alpha)}-\psi(Q).
$$
Using the generalized Mincovski  inequality, we   get
$$
\|( \mathfrak{D} ^{\alpha}_{0+,\varepsilon}f)(Q)-\psi(Q)\|_{L_{p}(\Omega)}\leq\!\! \int\limits_{ 0 }^{\infty}
\!\! \mathcal{K}(t)   \| \psi(Q - \varepsilon t \mathbf{e})(1-\varepsilon t/r)_{+}^{n-1}-\psi (Q)\|_{L_{p}(\Omega)}dt 
$$
$$
+\frac{1}{ \Gamma(1-\alpha)\varepsilon^{\alpha}}\|f\|_{L_{p}(\Delta'_{\varepsilon})}+\|\psi\|_{L_{p}(\Delta'_{\varepsilon})},\;\Delta'_{\varepsilon}=\Delta_{\varepsilon}\cup \tilde{\Omega}_{\varepsilon},
$$
here we use the denotations that were used in Theorem \ref{T2}. Arguing as above (see Theorem \ref{T2}), we see that
    all three  summands of the right side  of the previous  inequality tend to zero, when  $\varepsilon\downarrow 0.$
\end{proof}

\begin{theorem} \label{T4}  Suppose $\rho\in {\rm Lip}\,\lambda,\;\alpha<\lambda\leq 1,\;f\in H_{0}^{1}(\Omega) ;$ then $\rho f\in  \mathfrak{I} ^{\alpha}_{\,0 +}(L_{2} ) \cap \mathfrak{I} ^{\alpha}_{d -}(L_{2} ).$ \end{theorem}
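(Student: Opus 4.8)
The plan is to apply Theorem~\ref{T2} to $\rho f$. Since $\rho\in{\rm Lip}\,\lambda$ is bounded on the compact set $\bar\Omega$ and $f\in L_{2}(\Omega)$, we have $\rho f\in L_{2}(\Omega)$, so it is enough to show that $\psi^{+}_{\varepsilon}(\rho f)$ and $\psi^{-}_{\varepsilon}(\rho f)$ converge in $L_{2}(\Omega)$ as $\varepsilon\downarrow0$; I treat the left-side case, the right-side one being word for word the same. Starting from \eqref{7} and using, for $\varepsilon\le r\le d$, the identity $\rho(Q)f(Q)-\rho(T)f(T)(t/r)^{n-1}=\rho(Q)[f(Q)-f(T)(t/r)^{n-1}]+[\rho(Q)-\rho(T)]f(T)(t/r)^{n-1}$, I split
\begin{equation*}
(\psi^{+}_{\varepsilon}(\rho f))(Q)=\rho(Q)\,(\psi^{+}_{\varepsilon}f)(Q)+\int\limits_{0}^{r-\varepsilon}\frac{[\rho(Q)-\rho(T)]\,f(T)}{(r-t)^{\alpha+1}}\Bigl(\frac{t}{r}\Bigr)^{n-1}dt=:\rho(Q)\,(\psi^{+}_{\varepsilon}f)(Q)+R_{\varepsilon}(Q)
\end{equation*}
(with an analogous decomposition on $0\le r<\varepsilon$), and deal with the two summands separately.

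For the first summand, $\rho$ being bounded, it suffices to show that $\psi^{+}_{\varepsilon}f$ converges in $L_{2}(\Omega)$, i.e.\ (by Theorem~\ref{T2}) that $f\in\mathfrak{I}^{\alpha}_{0+}(L_{2})$; this is exactly the case $\rho\equiv1$. For it I would invoke Kipriyanov's estimate \eqref{3} with $l=1$, $p=2$ and an exponent $q>2$ chosen so small that \eqref{2} holds — such $q$ exists for every $\alpha\in(0,1)$. Then \eqref{3} shows that $\mathfrak{D}^{\alpha}_{0+}f$ (the Kipriyanov derivative, which for $f\in H^{1}_{0}(\Omega)$ agrees with the limit \eqref{8.1}) exists and belongs to $L_{q}(\Omega)\subset L_{2}(\Omega)$, and by the obvious right-side analogue so does $\mathfrak{D}^{\alpha}_{d-}f$; since \eqref{8.1} makes sense as an $L_{2}$-limit only when $fr^{-\alpha}\in L_{2}(\Omega)$, this term too is in $L_{2}(\Omega)$, and therefore $\psi^{+}_{\varepsilon}f=\tfrac{\Gamma(1-\alpha)}{\alpha}\bigl(\mathfrak{D}^{\alpha}_{0+,\varepsilon}f-\tfrac{1}{\Gamma(1-\alpha)}fr^{-\alpha}\bigr)$ converges in $L_{2}(\Omega)$. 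By Theorem~\ref{T2} (and its right-side version), $f\in\mathfrak{I}^{\alpha}_{0+}(L_{2})\cap\mathfrak{I}^{\alpha}_{d-}(L_{2})$, and in particular $\rho(Q)\,(\psi^{+}_{\varepsilon}f)(Q)$ converges in $L_{2}(\Omega)$.

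For the remainder $R_{\varepsilon}$: because $Q$ and $T=P+t\mathbf{e}$ lie on the same ray at Euclidean distance $r-t$, the Hölder–Lipschitz condition gives $|\rho(Q)-\rho(T)|\le M(r-t)^{\lambda}$, so
\begin{equation*}
|R_{\varepsilon}(Q)|\le M\int\limits_{0}^{r-\varepsilon}\frac{|f(T)|}{(r-t)^{1+\alpha-\lambda}}\Bigl(\frac{t}{r}\Bigr)^{n-1}dt\le M\,\Gamma(\lambda-\alpha)\,\bigl(\mathfrak{I}^{\lambda-\alpha}_{0+}|f|\bigr)(Q).
\end{equation*}
Since $0<\lambda-\alpha\le1-\alpha<1$ and $|f|\in L_{2}(\Omega)$, Theorem~\ref{T1} gives $\mathfrak{I}^{\lambda-\alpha}_{0+}|f|\in L_{2}(\Omega)$, so $R_{\varepsilon}$ has an $\varepsilon$-independent $L_{2}(\Omega)$-majorant. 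For a.e.\ $Q$ the integrand in $R_{\varepsilon}(Q)$ is dominated in $t$ by the integrable function $M|f(T)|(r-t)^{\lambda-\alpha-1}(t/r)^{n-1}$ and converges pointwise as $\varepsilon\downarrow0$, hence $R_{\varepsilon}(Q)$ converges pointwise a.e.; the dominated convergence theorem, together with the $L_{2}$-majorant, then yields convergence of $R_{\varepsilon}$ in $L_{2}(\Omega)$. The contribution of $\{r<\varepsilon\}$ tends to $0$ exactly as the terms $I_{2},I_{3}$ in the proof of Theorem~\ref{T2}. Adding the pieces, $\psi^{+}_{\varepsilon}(\rho f)$ converges in $L_{2}(\Omega)$, whence $\rho f\in\mathfrak{I}^{\alpha}_{0+}(L_{2})$ by Theorem~\ref{T2}; the right-side inclusion follows verbatim with $\psi^{-}_{\varepsilon}$ and $\mathfrak{I}^{\lambda-\alpha}_{d-}$.

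I expect the only real obstacle to be the base case $\rho\equiv1$, i.e.\ the embedding $H^{1}_{0}(\Omega)\subset\mathfrak{I}^{\alpha}_{0+}(L_{2})\cap\mathfrak{I}^{\alpha}_{d-}(L_{2})$; everything else is soft, resting only on the observation that the Hölder bound converts the commutator term into a directional fractional integral of the strictly positive order $\lambda-\alpha$, handled by Theorem~\ref{T1}. One could instead try to prove the base case directly by writing $r^{n-1}f(P+r\mathbf{e})=\int_{0}^{r}w(P+t\mathbf{e})t^{n-1}dt$ with $w=\partial_{r}f+\tfrac{n-1}{r}f$, so that $f=\mathfrak{I}^{1}_{0+}w=\mathfrak{I}^{\alpha}_{0+}\bigl(\mathfrak{I}^{1-\alpha}_{0+}w\bigr)$ by the semigroup property of the directional integrals (which reduces, after absorbing the weight $r^{n-1}$, to $I^{\alpha}_{0+}I^{1-\alpha}_{0+}=I^{1}_{0+}$), with $\mathfrak{I}^{1-\alpha}_{0+}w\in L_{2}(\Omega)$ by Theorem~\ref{T1}; but $\tfrac{n-1}{r}f$ need not lie in $L_{2}(\Omega)$ when $n=2$ (Hardy's inequality about a single boundary point fails there), which is precisely why routing the base case through Kipriyanov's embedding \eqref{3} is the safe choice.
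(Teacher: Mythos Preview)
Your commutator decomposition $\psi^{+}_{\varepsilon}(\rho f)=\rho\,\psi^{+}_{\varepsilon}f+R_{\varepsilon}$ is a genuinely different organization from the paper's, and your treatment of $R_{\varepsilon}$ via $|\rho(Q)-\rho(T)|\le M(r-t)^{\lambda}$ together with Theorem~\ref{T1} is correct and cleaner. The paper never isolates the commutator: it works directly with $\rho f$, first proving that $\varepsilon\mapsto\psi^{+}_{\varepsilon}(\rho f)$ is Cauchy for $f\in C_{0}^{\infty}(\Omega)$ (compact support kills the boundary pieces and a one-line Lipschitz estimate handles the remaining integral), and then passes to $H^{1}_{0}$ by density, showing through four explicit estimates in terms of $\|f_{n+m}-f_{n}\|_{H^{1}_{0}}$ that $\varphi_{n}=\mathfrak{D}^{\alpha}_{0+}(\rho f_{n})$ is Cauchy in $L_{2}$.

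The gap is in your base case. Estimate \eqref{3} bounds the Kipriyanov \emph{pointwise formula} $\mathfrak{D}^{\alpha}f$ in $L_{q}$; it does not assert that the truncations $\mathfrak{D}^{\alpha}_{0+,\varepsilon}f$ converge in any norm, and that convergence is precisely what ``$\psi^{+}_{\varepsilon}f$ converges in $L_{2}$'' means. The parenthetical ``which for $f\in H^{1}_{0}(\Omega)$ agrees with the limit \eqref{8.1}'' and the sentence ``since \eqref{8.1} makes sense as an $L_{2}$-limit only when $fr^{-\alpha}\in L_{2}(\Omega)$, this term too is in $L_{2}(\Omega)$'' both assume the very convergence you are trying to establish. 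The repair does not bypass the paper's two-step scheme: one still has to settle the $C_{0}^{\infty}$ case first (easy, exactly as in the paper), and only then can \eqref{3} be used --- now legitimately --- to run the density step more economically than the paper does, since boundedness $H^{1}_{0}\to L_{2}$ from \eqref{3} makes $\mathfrak{D}^{\alpha}f_{n}$ Cauchy and Theorem~\ref{T1} closes with $f=\lim\mathfrak{I}^{\alpha}_{0+}(\mathfrak{D}^{\alpha}f_{n})=\mathfrak{I}^{\alpha}_{0+}(\lim\mathfrak{D}^{\alpha}f_{n})$. Note finally that no right-side analogue of \eqref{3} is stated in the paper, so your ``obvious right-side analogue'' requires a separate reflection argument; the paper's direct Cauchy estimates are symmetric in the two sides and avoid this asymmetry.
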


\begin{proof}
We   provide a proof only for the left-side   case,   the proof corresponding to the right-side case is absolutely analogous.     First,  assume that  $f\in C_{0}^{\infty}(\Omega) .$
Using  the denotations that were used in Theorem \ref{T2},
  we have
\begin{equation}\label{21}
\|\psi^{+}_{ \varepsilon_1}f  - \psi^{+}_{ \varepsilon_2}f \|_{L_{2}(\Omega )}\leq\!\!\|\psi^{+}_{ \varepsilon_1}f  - \psi^{+}_{ \varepsilon_2}f \|_{L_{2}( \Omega_{\varepsilon_{1}})}+\|\psi^{+}_{ \varepsilon_1}f  - \psi^{+}_{ \varepsilon_2}f \|_{L_{2}(\tilde{\Omega}_{ \varepsilon_{1}})},
\end{equation}
where $\varepsilon_{1}>\varepsilon_{2}>0.$
   We have the following reasoning
$$\|\psi^{+}_{ \varepsilon_1}f  - \psi^{+}_{ \varepsilon_2}f \|_{L_{2}(\Omega_{\varepsilon_{1}})}\leq
 \left(\int\limits_{\omega_{\varepsilon_{1}}}d\chi\int\limits_{\varepsilon_{1}}^{d }\left|\int\limits_{r-\varepsilon_{1}}^{r-\varepsilon_{2}}\frac{(\rho f)(Q)r^{n-1}-
 (\rho f)(T)t^{n-1}}{r^{n-1}(r-t)^{\alpha +1}} dt \right|^{2}r^{n-1} dr    \right)^{\frac{1}{2}} 
$$
$$
  +\!\!\left(\int\limits_{\omega_{\varepsilon_{1}}}d\chi\int\limits_{ \varepsilon_{2}}^{ \varepsilon_{1}}\!\!\left|\int\limits_{0}^{r- \varepsilon_{1}}\frac{(\rho f)(Q)r^{n-1} }{r^{n-1}(r-t)^{\alpha +1}} dt-\!\!
 \int\limits_{ 0}^{r-\varepsilon_{2}}\frac{(\rho f)(Q)r^{n-1}-(\rho f)(T)t^{n-1}}{r^{n-1}(r-t)^{\alpha +1}} dt\right|^{2}r^{n-1} dr    \right)^{\frac{1}{2}} 
 $$
 $$
 +\left(\int\limits_{\omega_{\varepsilon_{1}}}d\chi\int\limits_{0}^{\varepsilon_{2} }\left|\int\limits_{ 0}^{r-\varepsilon_{1}}
 \frac{(\rho f)(Q)r^{n-1} }{r^{n-1}(r-t)^{\alpha +1}} dt-
 \int\limits_{0}^{r- \varepsilon_{2}}\frac{(\rho f)(Q)r^{n-1} }{r^{n-1}(r-t)^{\alpha +1}} dt\right|^{2}r^{n-1} dr    \right)^{\frac{1}{2}} 
$$
$$
 =I_{1}+I_{2}+I_{3}.
$$
  Since  $f\in C_{0}^{\infty}(\Omega),$ then for sufficiently  small   $\varepsilon_{1}>0$ we have  $f(Q)=0,\,r<\varepsilon_{1}.$ This implies that   $I_{2}=I_{3}=0$ and that the second summand of the right side of   inequality   \eqref{21} equals zero.
Making the change the variable in $I_{1},$ then using the generalized Minkowski  inequality, we get
$$
 I_{1}  =\left(\int\limits_{\omega_{\varepsilon_{1}}}d\chi\int\limits_{\varepsilon_{1}}^{d}\left|\int\limits_{ \varepsilon_{1}}^{ \varepsilon_{2}}
 \frac{(\rho f)(Q)r^{n-1}-
 (\rho  f )(Q-\mathbf{e} t )(r-t)^{n-1}}{ r^{n-1}t ^{\alpha +1}} dt\right|^{2}r^{n-1} dr    \right)^{\frac{1}{2}} 
$$
$$ \leq\int\limits_{ \varepsilon_{2}}^{ \varepsilon_{1}}t^{-\alpha -1}\!\!\!\left(\int\limits_{\omega_{\varepsilon_{1}}}d \chi\int\limits_{\varepsilon_{1}}^{d}\left| (\rho  f )(Q)-(1-t/r)^{n-1}(\rho  f )(Q-\mathbf{e}t) \right|^{2}  r^{n-1} dr    \right)^{\frac{1}{2}}\!\! dt  
$$
$$
  \leq\int\limits_{ \varepsilon_{2}}^{ \varepsilon_{1}}t^{-\alpha -1}\!\!\!\left(\int\limits_{\omega_{\varepsilon_{1}}}d \chi\int\limits_{\varepsilon_{1}}^{d}\left| (\rho f )(Q)- (\rho f )(Q-\mathbf{e}t) \right|^{2}  r^{n-1} dr    \right)^{\frac{1}{2}} \!\!dt 
$$
$$
+\int\limits_{ \varepsilon_{2}}^{ \varepsilon_{1}}t^{-\alpha -1}\left(\int\limits_{\omega_{\varepsilon_{1}}}d \chi\int\limits_{\varepsilon_{1}}^{d}\left[ 1-( 1-t/r )^{n-1}\right]\left| (\rho f )(Q-\mathbf{e}t) \right|^{2}  r^{n-1} dr    \right)^{\frac{1}{2}}\!\!dt  
$$
$$
\leq C_{1}\!\!\int\limits_{ \varepsilon_{2}}^{ \varepsilon_{1}}t^{\lambda-\alpha-1 }d t+\int\limits_{ \varepsilon_{2}}^{ \varepsilon_{1}}t^{-\alpha }\left(\int\limits_{\omega_{\varepsilon_{1}}}d \chi\int\limits_{\varepsilon_{1}}^{d}
\left|\frac{1}{r} \sum\limits_{i=0}^{n-2}\left( \frac{t}{r}\right)^{i}(\rho f )(Q-\mathbf{e}t) \right|^{2}  r^{n-1} dr    \right)^{\!\!\frac{1}{2}}\!\!dt.
$$
Using  the  function $f$    property, we see that there  exists a constant $\delta$ such that $  f  (Q-\mathbf{e}t )=0,\;r<\delta.$  In accordance with the above reasoning,      we have
 $$
 I_{1} \leq
   C_{1}\frac{  \varepsilon^{  \lambda-\alpha}_{1}-\varepsilon^{\lambda-\alpha}_{2}   }{\!\!\!\!\lambda-\alpha  }+ \|f\|_{L_{2}(\Omega)}\frac{  \varepsilon^{  1- \alpha}_{1}-\varepsilon^{1- \alpha}_{2}   }{\!\!\!\delta(1-\alpha)  } (n-1) .
$$
Applying Theorem \ref{T1}, we complete  the     proof for the case    $ ( f\in C_{0}^{\infty}(\Omega)).$
Now assume that  $f\in H^1_{0}(\Omega),$ then there   exists the sequence $\{f_{n}\} \subset C_{0}^{\infty}(\Omega),\;   f_{n}\stackrel{ H^1_{0}}{\longrightarrow} f.$ It is easy to prove that $   \rho f_{n}\stackrel{ L_{2}}{\longrightarrow} \rho f.$
In accordance with the proven  above fact, we have $\rho f_{n}= \mathfrak{I} ^{\alpha}_{0+}\varphi_{n},\;\{\varphi_{n}\}\in L_{2}(\Omega),$ therefore
\begin{equation}\label{22}
 \mathfrak{I} ^{\alpha}_{0+}\varphi_{n}\stackrel{L_{2} }{\longrightarrow} \rho f.
\end{equation}
To conclude the proof, it is sufficient to show that   $\varphi_{n}\stackrel{L_{2} }{\longrightarrow}\varphi\in L_{2}(\Omega).$  Note that by virtue of  Theorem  \ref{T2} we have  $\mathfrak{ D} ^{\alpha}_{0+}\rho f_{n}=\varphi_{n}.$
Let  $ c_{n,m}:=f_{n+m}-f_{n},$  we have
$$
\|\varphi_{n+m}-\varphi_{n}\|_{L_{2}(\Omega)}\leq\frac{\alpha}{\Gamma(1-\alpha)}\left(\int\limits_{\Omega} \left|\int\limits_{0 }^{r}\frac{(\rho c_{n,m})(Q)r^{n-1}-
(\rho c_{n,m})(T)t^{n-1}}{r^{n-1}( t-r)^{\alpha +1}} dt\right|^{2} dQ    \right)^{\frac{1}{2}} 
$$
$$
+\frac{1}{\Gamma(1-\alpha)}\left(\int\limits_{\Omega}\left| \frac{(\rho c_{n,m})(Q) }{ r ^{\alpha }} \right|^{2}dQ    \right)^{\frac{1}{2}}=I_3 +I_4.
$$
Consider $I_{3}.$ It can be shown in the usual way that
$$
\frac{\Gamma(1-\alpha)}{\alpha} I_3  \leq
  \left\{\int\limits_{\Omega}\left|\int\limits_{0 }^{ r}\frac{  (\rho c_{n,m}) (Q)-  (\rho c_{n,m}) (Q-\mathbf{e}t) }{ t^{\alpha +1}} dt\right|^{2} dQ    \right\}^{\frac{1}{2}} 
$$
$$
  + \left\{\int\limits_{\Omega} \left|\int\limits_{0 }^{ r}\frac{(\rho c_{n,m})(Q-\mathbf{e}t)[1-(1- t/r)^{n-1} ]  }{ t^{1+\alpha   }} dt\right|^{2} dQ    \right\}^{\frac{1}{2}}=
    I_{01}+I_{02};
 $$
 $$
I_{01}\leq \sup\limits_{Q\in \Omega}|\rho(Q)|\left\{\int\limits_{\Omega}\left(\int\limits_{0 }^{ r}\frac{  |  c_{n,m} (Q)-    c_{n,m} (Q-\mathbf{e}t) |}{ t^{\alpha +1}} dt\right)^{2} dQ    \right\}^{\frac{1}{2}} 
$$
$$
+\left\{\int\limits_{\Omega}\left|\int\limits_{0 }^{ r}\frac{ c_{n,m} (Q-\mathbf{e}t)  [  \rho (Q)-    \rho (Q-\mathbf{e}t)] }{ t^{\alpha +1}} dt\right|^{2} dQ    \right\}^{\frac{1}{2}}=
 I_{11}+I_{21} .
$$
Applying  the generalized Minkowski  inequality, then representing the function under the inner integral by the directional derivative,   we get
$$
I_{11}
 \leq C_{1}\int\limits_{ 0}^{ \mathfrak{d}}t^{-\alpha -1}\left(\int\limits_{\Omega} \left| c_{n,m}(Q)-c_{n,m}(Q-\mathbf{e}t) \right|^{2}  dQ   \right)^{\frac{1}{2}} dt 
$$
$$
=C_{1}\int\limits_{ 0}^{ \mathfrak{d}}t^{-\alpha -1}\left(\int\limits_{\Omega} \left| \int\limits_{0}^{t} c'_{n,m} (Q-\mathbf{e}\tau) d\tau\right|^{2}   dQ\right)^{\frac{1}{2}} dt.
$$
Using the Cauchy-Schwarz inequality, the Fubini  theorem, we have
$$
I_{11} \leq C_{1}\int\limits_{ 0}^{ \mathfrak{d}}t^{-\alpha -1}\left(\int\limits_{\Omega}dQ \int\limits_{0}^{t} \left|c'_{n,m} (Q-\mathbf{e}\tau)\right|^{2}d\tau \int\limits_{0}^{t}d\tau    \right)^{\frac{1}{2}} dt 
$$

$$
  =C_{1}\int\limits_{ 0}^{ \mathfrak{d}}t^{-\alpha -1/2 }\left( \int\limits_{0}^{t}d\tau \int\limits_{\Omega}\left|c'_{n,m} (Q-\mathbf{e}\tau)\right|^{2} dQ   \right)^{\frac{1}{2}} dt\leq C_{1}
 \frac{ \mathfrak{d}^{1-\alpha}  }{1-\alpha  } \,  \|c'_{n,m} \|_{L_{2}(\Omega)}.
$$
 Arguing as above, using the Holder  property of the function $\rho,$ we see that
$$
I_{21}\leq M \int\limits_{ 0}^{ \mathfrak{d}}t^{\lambda-\alpha -1}\left(\int\limits_{\Omega} \left|  c_{n,m}(Q-\mathbf{e}t) \right|^{2}  dQ    \right)^{\frac{1}{2}} dt\leq
M\frac{ \mathfrak{d}^{\lambda-\alpha}  }{\lambda-\alpha } \,  \|c _{n,m} \|_{L_{2}(\Omega)}.
$$
It can be shown in the usual way that
$$
I_{02}\leq C_{1}\left\{\int\limits_{\Omega} \left|\int\limits_{0 }^{ r}  | c_{n,m}(Q-\mathbf{e}t)| \sum\limits_{i=0}^{n-2}\left(\frac{t}{r}\right)^{i}  r^{-1} t^{ -\alpha   }  dt\right|^{2} dQ    \right\}^{\frac{1}{2}} 
$$
 $$
\leq C_{2}\left\{\int\limits_{\Omega} \left(\int\limits_{0 }^{ r} t^{ -\alpha   }dt \int\limits_{t}^{r}\left| c'_{n,m}(Q-\mathbf{e}\tau)\right| d\tau      \right)^{2}r^{-2}  dQ    \right\}^{\frac{1}{2}} 
$$
$$
= C_{2}\left\{\int\limits_{\Omega} \left(\int\limits_{0 }^{ r}\left| c'_{n,m}(Q-\mathbf{e}\tau)\right| d\tau  \int\limits_{0}^{\tau}t^{ -\alpha   }dt      \right)^{2}r^{-2}  dQ    \right\}^{\frac{1}{2}} 
$$
 $$
 \leq\frac{C_{2}}{1-\alpha}\left\{\int\limits_{\Omega} \left(\int\limits_{0 }^{ r}  \left| c'_{n,m}(Q-\mathbf{e}\tau)\right| \tau^{ -\alpha} d\tau      \right)^{2}   dQ    \right\}^{\frac{1}{2}}.
$$
Applying  the generalized Minkowski  inequality, we have
$$
I_{02}\leq C_{3}\int\limits_{0 }^{ \mathfrak{d}} \tau^{ -\alpha} d\tau  \left(\int\limits_{\Omega} \left| c'_{n,m}(Q-\mathbf{e}\tau)\right|^{2} dQ     \right)^{\frac{1}{2}}   \leq
C_{3}\frac{\mathfrak{d}^{1-\alpha}}{1-\alpha} \|c'_{n,m}\|_{L_{2}(\Omega)}.
$$
Consider   $I_{2},$  we have  
$$
I_2  \leq\frac{C_{ 1}}{\Gamma(1-\alpha)}\left(\int\limits_{\Omega} \left| c_{n,m}(Q)\right|^{2}   r ^{-2\alpha  } dQ\right)^{ \frac{1}{2}}  
$$
$$
=\frac{C_{ 1}}{\Gamma(1-\alpha)} \left(\int\limits_{\Omega}  r ^{-2\alpha  } \left|\int\limits_{0}^{r} c'_{n,m}(Q-\mathbf{e}t)dt\right|^{2}dQ     \right)^{\frac{1}{2}} 
 $$
 $$
 \leq\frac{C_{ 1}}{\Gamma(1-\alpha)} \left(\int\limits_{\Omega}    \left|\int\limits_{0}^{r} c'_{n,m}(Q-\mathbf{e}t)t^{-\alpha}dt\right|^{2}dQ     \right)^{\frac{1}{2}}.
$$
Using the generalized Minkowski inequality, then applying the  trivial estimates,  we get
$$
 I_2\leq  C_{ 4} \left\{\int\limits_{\omega}\left[\int\limits_{0}^{d} t^{-\alpha} dt  \left(\int\limits_{t}^{d}|c'_{n,m}(Q-\mathbf{e}t)|^{2}   r^{ n-1 }dr\right)^{\frac{1}{2}} \right]^{2}d\chi  \right\}^{\frac{1}{2}}  
$$
$$
  \leq  C_{ 4} \left\{\int\limits_{\omega}\left[\int\limits_{0}^{\mathfrak{d}} t^{-\alpha} dt  \left(\int\limits_{0}^{d}|c'_{n,m}(Q-\mathbf{e}t)|^{2}   r^{ n-1 }dr\right)^{\frac{1}{2}} \right]^{2}d\chi  \right\}^{\frac{1}{2}}  
$$
$$
  =  C_{ 4}\int\limits_{0}^{\mathfrak{d}} t^{-\alpha}   dt  \left(\int\limits_{\omega}d\chi\int\limits_{0}^{d}|c'_{n,m}(Q-\mathbf{e}t)|^{2}   r^{ n-1 }dr\right)^{\frac{1}{2}}       \leq  C_{ 4}\frac{\mathfrak{d}^{1-\alpha}}{1-\alpha}\|c'_{n,m}\|_{L_{2}(\Omega)}.
$$
  Taking into account that the sequences    $\{f_{n}\},\{f'_{n}\}$ are fundamental, we obtain       $I_{1},I_{2}\rightarrow 0.$ Hence the  sequence  $\{\varphi_{n}\} $ is fundamental and  $\varphi_{n}\stackrel{L_{2} }{\longrightarrow}\varphi\in L_{2}(\Omega).$  Note that by virtue  of  Theorem \ref{T1} the  directional fractional integral   operator  is bounded on the space $L_{2}(\Omega).$ Hence
$$
 \mathfrak{I }^{\alpha}_{0 +}\varphi_{n}\stackrel{L_{2} }{\longrightarrow}  \mathfrak{I} ^{\alpha}_{0+}\varphi.
$$
Combining this fact with   \eqref{22}, we have   $\rho f=  \mathfrak{I} ^{\alpha}_{0+}\varphi.$

\end{proof}
 \begin{lemma}\label{L1}
 The operator $\mathfrak{D}^{ \alpha }$ is a restriction  of the  operator $\mathfrak{D}^{ \alpha }_{0+}.$
\end{lemma}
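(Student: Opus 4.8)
The plan is to show that for every $f\in\stackrel{\circ}{W_{p}^{l}}(\Omega)$ (the domain of $\mathfrak{D}^{\alpha}$ under the hypotheses \eqref{2}) the regularizations $\mathfrak{D}^{\alpha}_{0+,\varepsilon}f$ converge in $L_{p}(\Omega)$, as $\varepsilon\downarrow 0$, to $\mathfrak{D}^{\alpha}f$; by \eqref{8.1} this is precisely the assertion that $f\in\mathrm{D}(\mathfrak{D}^{\alpha}_{0+})$ and $\mathfrak{D}^{\alpha}_{0+}f=\mathfrak{D}^{\alpha}f$, i.e. that $\mathfrak{D}^{\alpha}$ is a restriction of $\mathfrak{D}^{\alpha}_{0+}$. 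I would prove this first for $f\in C_{0}^{\infty}(\Omega)$ and then remove the restriction by density, using that $\mathfrak{D}^{\alpha}$ is bounded from $\stackrel{\circ}{W_{p}^{l}}(\Omega)$ into $L_{q}(\Omega)\hookrightarrow L_{p}(\Omega)$ (take $\delta=1$ in \eqref{3}) and that $\mathfrak{I}^{\alpha}_{0+}$ is bounded on $L_{p}(\Omega)$ (Theorem \ref{T1}).

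The heart of the matter is an exact pointwise identity between the two truncated operators. For $\varepsilon\le r\le d$, splitting the integrand in \eqref{7} gives $(\psi^{+}_{\varepsilon}f)(Q)=f(Q)\int_{0}^{r-\varepsilon}(r-t)^{-\alpha-1}\,dt-\int_{0}^{r-\varepsilon}f(T)(t/r)^{n-1}(r-t)^{-\alpha-1}\,dt$, hence by \eqref{8}
\[
\mathfrak{D}^{\alpha}_{0+,\varepsilon}f(Q)=\frac{f(Q)\,\varepsilon^{-\alpha}}{\Gamma(1-\alpha)}-\frac{\alpha}{\Gamma(1-\alpha)}\int_{0}^{r-\varepsilon}f(T)(t/r)^{n-1}(r-t)^{-\alpha-1}\,dt.
\]
On the Kipriyanov side one instead meets $\int_{0}^{r-\varepsilon}(t/r)^{n-1}(r-t)^{-\alpha-1}\,dt$; the substitution $t=ru$ and the binomial expansion $u^{n-1}=\sum_{k=0}^{n-1}\binom{n-1}{k}(-1)^{k}(1-u)^{k}$ turn it into an incomplete Beta integral which, after multiplication by $r^{-\alpha}$, equals $\frac{\varepsilon^{-\alpha}}{\alpha}+r^{-\alpha}B(-\alpha,n)-\sum_{k=1}^{n-1}\binom{n-1}{k}\frac{(-1)^{k}}{k-\alpha}\,\varepsilon^{k-\alpha}r^{-k}$, where $B(-\alpha,n)=\sum_{k=0}^{n-1}\binom{n-1}{k}\frac{(-1)^{k}}{k-\alpha}=\frac{\Gamma(-\alpha)(n-1)!}{\Gamma(n-\alpha)}=-\frac{\Gamma(1-\alpha)}{\alpha}\,C^{(\alpha)}_{n}$. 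The value of $C^{(\alpha)}_{n}$ in the definition of $\mathfrak{D}^{\alpha}$ is chosen exactly so that $\frac{\alpha}{\Gamma(1-\alpha)}f(Q)\,r^{-\alpha}B(-\alpha,n)$ cancels the summand $C^{(\alpha)}_{n}f(Q)r^{-\alpha}$. Collecting the terms one obtains, on $\{\varepsilon\le r\le d\}$,
\[
\mathfrak{D}^{\alpha}_{0+,\varepsilon}f(Q)=\mathfrak{D}^{\alpha}_{\varepsilon}f(Q)+\frac{\alpha}{\Gamma(1-\alpha)}\,f(Q)\sum_{k=1}^{n-1}\binom{n-1}{k}\frac{(-1)^{k}}{k-\alpha}\,\varepsilon^{k-\alpha}r^{-k},
\]
where $\mathfrak{D}^{\alpha}_{\varepsilon}f$ denotes the truncation of $\mathfrak{D}^{\alpha}f$ obtained by replacing $\int_{0}^{r}$ with $\int_{0}^{r-\varepsilon}$.

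For $f\in C_{0}^{\infty}(\Omega)$ the passage to the limit is then immediate: $f$ vanishes near $\partial\Omega$, so for all small $\varepsilon$ neither the set $\{r<\varepsilon\}$ nor the union of short rays $\tilde{\Omega}_{\varepsilon}$ (see \eqref{17}) meets $\mathrm{supp}\,f$, on which moreover $r\ge\delta_{0}>0$; together with the Lipschitz bound $|f(Q)-f(T)|\le L(r-t)$ (with $L$ a Lipschitz constant of $f$) this shows that the remainder term above is $O(\varepsilon^{1-\alpha})\|f\|_{L_{p}(\Omega)}$ and that $\|\mathfrak{D}^{\alpha}_{\varepsilon}f-\mathfrak{D}^{\alpha}f\|_{L_{p}(\Omega)}\le C\,|\Omega|^{1/p}\int_{0}^{\varepsilon}s^{-\alpha}\,ds=O(\varepsilon^{1-\alpha})$, whence $\mathfrak{D}^{\alpha}_{0+,\varepsilon}f\to\mathfrak{D}^{\alpha}f$ in $L_{p}(\Omega)$. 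For general $f\in\stackrel{\circ}{W_{p}^{l}}(\Omega)$ take $f_{k}\in C_{0}^{\infty}(\Omega)$ with $f_{k}\to f$ in $W_{p}^{l}(\Omega)$; by the case just treated together with Theorems \ref{T2} and \ref{T3} (and \eqref{8}) one has $f_{k}=\mathfrak{I}^{\alpha}_{0+}(\mathfrak{D}^{\alpha}f_{k})$, and letting $k\to\infty$, using the continuity of $\mathfrak{D}^{\alpha}\colon\stackrel{\circ}{W_{p}^{l}}(\Omega)\to L_{p}(\Omega)$ and the boundedness of $\mathfrak{I}^{\alpha}_{0+}$ (Theorem \ref{T1}), gives $f=\mathfrak{I}^{\alpha}_{0+}(\mathfrak{D}^{\alpha}f)$; Theorem \ref{T3} then yields $\mathfrak{D}^{\alpha}_{0+}f=\mathfrak{D}^{\alpha}f$, which completes the proof.

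The only genuine difficulty is the bookkeeping in the second step --- in particular recognizing $\sum_{k=0}^{n-1}\binom{n-1}{k}(-1)^{k}/(k-\alpha)$ as the Beta value $B(-\alpha,n)$ and matching it with $C^{(\alpha)}_{n}$, which is exactly why the two a priori different fractional derivatives coincide; the limit passages themselves run parallel to those of Theorems \ref{T2} and \ref{T3}. In the Hilbert-space situation $p=2$, $l=1$ the inclusion $\mathrm{D}(\mathfrak{D}^{\alpha})\subset\mathrm{D}(\mathfrak{D}^{\alpha}_{0+})$ can alternatively be read off from Theorem \ref{T4} taken with $\rho\equiv 1$.
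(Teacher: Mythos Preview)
Your argument is correct and, at its algebraic core, equivalent to the paper's: both rest on the same coefficient identity that makes $C_{n}^{(\alpha)}r^{-\alpha}$ absorb exactly the discrepancy between the kernels $(f(Q)-f(T))(t/r)^{n-1}$ and $(f(Q)r^{n-1}-f(T)t^{n-1})/r^{n-1}$. The presentations differ in two respects.

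First, the paper works directly at the pointwise level with the \emph{non-truncated} expressions: it rewrites $\mathfrak{D}^{\alpha}v$ as $\mathfrak{D}^{\alpha}_{0+}v-I_{1}+I_{2}-I_{3}$, computes $I_{1}$ via the fractional-integral-of-a-power formula $I^{1-\alpha}_{0+}t^{i}=\Gamma(i+1)r^{i+1-\alpha}/\Gamma(i+2-\alpha)$, and then verifies $I_{1}+I_{3}=I_{2}=C_{n}^{(\alpha)}v(Q)r^{-\alpha}$ by a short telescoping induction on the partial sums $\sum_{i}\alpha\,i!/\Gamma(2-\alpha+i)$. You instead keep the parameter $\varepsilon$, evaluate the incomplete Beta integral, and identify the constant piece as $B(-\alpha,n)=\Gamma(-\alpha)(n-1)!/\Gamma(n-\alpha)$; this is the same identity in closed form rather than telescoped. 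Your route has the advantage of making the existence of the $L_{p}$-limit in \eqref{8.1} explicit (first for $C_{0}^{\infty}$, then by the $\mathfrak{I}^{\alpha}_{0+}$--density step), whereas the paper tacitly treats $\mathfrak{D}^{\alpha}_{0+}$ as given by its formal integral on $\stackrel{\circ}{W_{p}^{l}}(\Omega)$.

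Second, the paper's proof contains one element yours does not: it shows the restriction is \emph{proper}, by exhibiting $f\in\mathfrak{I}^{\alpha}_{0+}(L_{p})\subset\mathrm{D}(\mathfrak{D}^{\alpha}_{0+})$ with nonzero trace on a set of positive measure in $\partial\Omega$, which cannot lie in $\stackrel{\circ}{W_{p}^{l}}(\Omega)$. If you read ``restriction'' as including strictness, you should add this sentence; otherwise your argument already establishes the inclusion $\mathfrak{D}^{\alpha}\subset\mathfrak{D}^{\alpha}_{0+}$, which is all that is used later (in Lemma~\ref{L5}).
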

\begin{proof}
We need to show that the  next equality holds
\begin{equation}\label{23}
 (\mathfrak{D}^{ \alpha }f)(Q)= \left(\mathfrak{D}^{ \alpha }_{0+} f\right)(Q), \,f\in \stackrel{\circ}{W_p ^l} (\Omega).
  \end{equation}
It can be shown in the usual way that
$$
 \mathfrak{D}^{ \alpha }v=\frac{\alpha}{\Gamma(1-\alpha)}\int\limits_{0}^{r} \frac{v(Q)-v(T)}{(r- t)^{\alpha+1}}  \left(\frac{t}{r}\right)  ^{n-1} dt+
  C_{n}^{(\alpha)}    v(Q)   r ^{ -\alpha} 
$$
$$
= \frac{\alpha}{\Gamma(1-\alpha)}\int\limits_{0}^{r} \frac{r^{n-1}v(Q)-t^{n-1}v(T)}{r^{n-1}(r- t)^{\alpha+1}}dt-\frac{\alpha\,v(Q)}{\Gamma(1-\alpha)}\int\limits_{0}^{r}
\frac{ r^{n-1} -t^{n-1}  }{r^{n-1}(r- t)^{\alpha+1}}   dt 
$$
$$
+C_{n}^{(\alpha)}   v(Q)   r ^{  -\alpha}=
  ( \mathfrak{D} ^{\alpha}_{0+}v)(Q)-
\frac{\alpha \, v(Q) }{\Gamma(1-\alpha)} \sum\limits_{i=0}^{n-2}  r ^{ -1-i}\int\limits_{0}^{r} \frac{t^{i}}{(r- t)^{\alpha }}   dt 
$$
\begin{equation}\label{24}
+C_{n}^{(\alpha)}  v(Q)   r ^{  -\alpha}-\frac{v(Q)   r ^{  -\alpha}}{\Gamma(1-\alpha)}   =  ( \mathfrak{D} ^{\alpha}_{0+}v)(Q)- I_1 +I_2 -I_3.
\end{equation}
  Using the formula of the fractional integral of a power  function (2.44) \cite[p.47]{firstab_lit:samko1987}, we have
$$
I_1 =
\frac{\alpha\, v(Q)\,r ^{-1 } }{\Gamma(1-\alpha)}\int\limits_{0}^{r} \frac{dt}{(r- t)^{\alpha }}      +\frac{\alpha\, v(Q) }{\Gamma(1-\alpha)} \sum\limits_{i=1}^{n-2}
r ^{-1-i}\int\limits_{0}^{r} \frac{t^{i}}{(r- t)^{\alpha }}   dt 
$$
$$
  =
 v(Q)\frac{\alpha }{ \Gamma(2-\alpha)}r ^{ -\alpha }    +  v(Q)  \alpha \sum\limits_{i=1}^{n-2}
r ^{-1-i} (I^{1-\alpha}_{0+}t^{i})(r)     
$$
$$
=v(Q)\frac{\alpha }{ \Gamma(2-\alpha)}r ^{ -\alpha }    +  v(Q)  \alpha \sum\limits_{i=1}^{n-2}
r ^{ -\alpha}\frac{i!}{\Gamma(2-\alpha+i)}.
$$
Hence
$$
  I_1 +I_3
=\frac{v(Q)r ^{ -\alpha }}{ \Gamma(2-\alpha)}    +   v(Q)r ^{ -\alpha }  \alpha \sum\limits_{i=1}^{n-2}
 \frac{i!}{\Gamma(2-\alpha+i)}=
 \frac{2 v(Q)r ^{ -\alpha }}{ \Gamma(3-\alpha)}      
 $$
 $$
 +   v(Q)r ^{ -\alpha } \alpha \sum\limits_{i=2}^{n-2}
 \frac{i!}{\Gamma(2-\alpha+i)}=
 \frac{3!v(Q)r ^{ -\alpha }}{ \Gamma(4-\alpha)}     +    v(Q)r ^{ -\alpha } \alpha \sum\limits_{i=3}^{n-2}
 \frac{i!}{\Gamma(2-\alpha+i)} 
$$
\begin{equation}\label{25}
 =\frac{(n-2)!v(Q)r ^{ -\alpha }}{ \Gamma(n-1-\alpha)}     +    v(Q)r ^{ -\alpha } \alpha
 \frac{(n-2)!}{\Gamma(n-\alpha )}=C_{n}^{(\alpha)}v(Q)r ^{ -\alpha }.
\end{equation}
Therefore $I_{2}-I_{1}-I_{3}=0$
and obtain   equality \eqref{23}. Let us prove that the considered operators do  not coincide with each other. For this purpose consider the function
  $f= \mathfrak{I}^{\alpha}_{0+}  \varphi,\;\varphi \in L_{p}(\Omega) ,$  then in accordance with  Theorem \ref{T2}, we have $ \mathfrak{D} _{0+} ^{ \alpha }  \mathfrak{I}^{\alpha}_{0+}   \varphi  =\varphi.$ Hence  $\mathfrak{I}^{\alpha}_{0+} \left( L_{p}\right)\subset\mathrm{D}\left(\mathfrak{D}^{ \alpha }_{0+}\right).$ Now  it is sufficient to notice  that
$$
 \exists f\in \mathfrak{I}^{\alpha}_{0+} \left( L_{p}\right) ,\;f(\Lambda)\neq 0,
$$
where $\Lambda \subset\partial\Omega,\;{\rm mess}\, \Lambda\neq 0.$
On the other hand, we know that
$$
f(\partial \Omega)= 0\;a.e.,\;\forall f\in \mathrm{D}\left(\mathfrak{D}^{ \alpha }  \right) .
$$

\end{proof}
\begin{lemma}\label{L2} The following identity  holds
\begin{equation*}
  (\mathfrak{D} _{0+} ^{ \alpha })^{*}  = \mathfrak{D }^{\alpha}_{d-},
\end{equation*}
where    limits \eqref{8.1} are understood as  the limits with respect to the $L_{2}(\Omega)$ norm.
 \end{lemma}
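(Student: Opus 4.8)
The plan is to reduce the statement to a duality property of the bounded directional fractional integral operators from Theorem \ref{T1}, combined with a standard fact about adjoints of inverse operators. The first step is to prove the ``integration by parts'' identity
$$
\left(\mathfrak{I}^{\alpha}_{0+}g,\,h\right)_{L_{2}(\Omega)}=\left(g,\,\mathfrak{I}^{\alpha}_{d-}h\right)_{L_{2}(\Omega)},\quad g,h\in L_{2}(\Omega),
$$
that is, $(\mathfrak{I}^{\alpha}_{0+})^{*}=\mathfrak{I}^{\alpha}_{d-}$. Writing both inner products in the coordinates of \eqref{1}, the weight $r^{n-1}$ coming from $dQ$ cancels the factor $(t/r)^{n-1}$ appearing in the left-side integral and leaves the weight $t^{n-1}$ which then accompanies $g$; on the other side the (unweighted) right-side integral together with the $t^{n-1}$ from $dQ$ produces exactly the same expression. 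Interchanging the order of integration over the triangle $\{0<t<r<d(\mathbf{e})\}$ by Fubini's theorem gives the identity; the interchange is legitimate since replacing $g,h$ by $|g|,|h|$ yields $\int_{\Omega}(\mathfrak{I}^{\alpha}_{0+}|g|)(Q)|h(Q)|\,dQ\le C\|g\|_{L_{2}(\Omega)}\|h\|_{L_{2}(\Omega)}<\infty$ by Theorem \ref{T1}.

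The second step records the structural facts about $\mathfrak{I}^{\alpha}_{0+}$ needed below. By Theorem \ref{T3} we have $\mathfrak{D}^{\alpha}_{0+}\mathfrak{I}^{\alpha}_{0+}g=g$ for all $g\in L_{2}(\Omega)$, so $\mathfrak{I}^{\alpha}_{0+}$ is injective; moreover, combining Theorems \ref{T2} and \ref{T3}, the operator $\mathfrak{D}^{\alpha}_{0+}$ is precisely the inverse of $\mathfrak{I}^{\alpha}_{0+}$, with $\mathrm{D}(\mathfrak{D}^{\alpha}_{0+})=\mathfrak{I}^{\alpha}_{0+}(L_{2})$. Taking $\rho\equiv 1\in{\rm Lip}\,\lambda$ in Theorem \ref{T4} gives $H_{0}^{1}(\Omega)\subset\mathfrak{I}^{\alpha}_{0+}(L_{2})$, so $\mathfrak{I}^{\alpha}_{0+}$ has dense range, and $\mathfrak{D}^{\alpha}_{0+}$ is densely defined. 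The same properties hold, via the right-side versions of these theorems, for $\mathfrak{I}^{\alpha}_{d-}$ and $\mathfrak{D}^{\alpha}_{d-}$.

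The third step is the abstract fact: if $A$ is a bounded injective operator on a Hilbert space with dense range, then $A^{*}$ is bounded, injective (because $\ker A^{*}=\mathrm{R}(A)^{\perp}=\{0\}$) with dense range (because $\overline{\mathrm{R}(A^{*})}=(\ker A)^{\perp}$), and $(A^{-1})^{*}=(A^{*})^{-1}$. Indeed, $u\in\mathrm{D}((A^{-1})^{*})$ with $(A^{-1})^{*}u=w$ means $(A^{-1}x,u)=(x,w)$ for every $x\in\mathrm{R}(A)$; substituting $x=Ay$ with $y$ arbitrary, this becomes $(y,u)=(Ay,w)=(y,A^{*}w)$, hence $u=A^{*}w$, i.e.\ $u\in\mathrm{R}(A^{*})$ and $w=(A^{*})^{-1}u$; the converse implication is identical. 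Applying this with $A=\mathfrak{I}^{\alpha}_{0+}$, so that $A^{-1}=\mathfrak{D}^{\alpha}_{0+}$ and, by the first step, $A^{*}=\mathfrak{I}^{\alpha}_{d-}$ and therefore $(A^{*})^{-1}=\mathfrak{D}^{\alpha}_{d-}$, we obtain $(\mathfrak{D}^{\alpha}_{0+})^{*}=\mathfrak{D}^{\alpha}_{d-}$. I expect the main obstacle to be the careful bookkeeping of the weights $r^{n-1}$ and $(t/r)^{n-1}$ in the first step, so that the adjoint of $\mathfrak{I}^{\alpha}_{0+}$ comes out exactly as the \emph{unweighted} operator $\mathfrak{I}^{\alpha}_{d-}$ rather than a weighted variant, and in justifying the application of Fubini's theorem; the identification $\mathrm{D}(\mathfrak{D}^{\alpha}_{0+})=\mathrm{R}(\mathfrak{I}^{\alpha}_{0+})$ is essentially already contained in Theorems \ref{T2} and \ref{T3}.
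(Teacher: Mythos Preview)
Your argument is correct and is essentially the paper's own: both hinge on the Fubini computation showing $(\mathfrak{I}_{0+}^{\alpha})^{*}=\mathfrak{I}_{d-}^{\alpha}$ (the paper's display \eqref{27}) together with the identification $\mathfrak{D}_{0+}^{\alpha}=(\mathfrak{I}_{0+}^{\alpha})^{-1}$, $\mathfrak{D}_{d-}^{\alpha}=(\mathfrak{I}_{d-}^{\alpha})^{-1}$ from Theorems \ref{T2}--\ref{T3}. You package the final step as the abstract identity $(A^{-1})^{*}=(A^{*})^{-1}$ and make the density of $\mathrm{D}(\mathfrak{D}_{0+}^{\alpha})$ explicit via Theorem \ref{T4}, whereas the paper carries out the two domain inclusions $\mathrm{D}(\mathfrak{D}_{d-}^{\alpha})\subset\mathrm{D}((\mathfrak{D}_{0+}^{\alpha})^{*})$ and its reverse by hand and leaves density implicit.
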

\begin{proof}
Let us  show that the next relation is true
\begin{equation}\label{26}
 (\mathfrak{D}^{ \alpha }_{0+} f ,  g  )_{L_{2}(\Omega)}
=(f , \mathfrak{D }^{\alpha}_{d-} g )_{L_{2}(\Omega)},
\end{equation}
$$
 f\in \mathfrak{I}^{\alpha}_{0+} \left( L_{2}\right),\,g\in  \mathfrak{I } ^{\alpha}_{d-}\left(  L_{2}\right) .
$$
Note that by virtue of Theorem  \ref{T3}, we have    $\,\mathfrak{D}^{ \alpha }_{0+} \mathfrak{I}^{\alpha}_{0+} \varphi   =\varphi ,\,
\mathfrak{D }^{\alpha}_{d-}  \mathfrak{I } ^{\alpha}_{d-}   \psi  =\psi  ,$ where $\psi,\psi\in L_{2}(\Omega).$
Hence, using  Theorem \ref{T1}, we have  that the  left and right side of \eqref{26} are finite. Therefore
using the Fubini  theorem, we have
$$
(\mathfrak{D}^{ \alpha }_{0+} f ,  g  )_{L_{2}(\Omega)}=\int\limits_{\omega}d\chi \int\limits_{0}^{d}\varphi(Q)
\overline{\left(\mathfrak{I } ^{\alpha}_{d-}\psi\right)(Q)}r^{n-1}dr 
$$
$$
 =\frac{1}{\Gamma(\alpha)}\int\limits_{\omega}d\chi\int\limits_{0}^{d}\varphi(Q)r^{n-1}dr\int\limits_{r}^{d}\frac{\overline{\psi(T)}}{(t-r)^{1-\alpha}}\,dt 
$$
\begin{equation}\label{27}
=\frac{1}{\Gamma(\alpha)}\int\limits_{\omega}d\chi\int\limits_{0}^{d}\overline{\psi(T)}t^{n-1}dt\int\limits_{0}^{t}\frac{\varphi(Q)}{(t-r)^{1-\alpha}}\left( \frac{r}{t}\right)^{n-1}dr 
$$
$$
=\int\limits_{\Omega} \left(\mathfrak{I} ^{\alpha}_{0+} \varphi\right)(Q)\,\overline{\psi(Q)} \, dQ=(f , \mathfrak{D }^{\alpha}_{d-} g )_{L_{2}(\Omega)}.
\end{equation}
Thus inequality \eqref{26} is proved. It follows that   $\mathrm{D}(\mathfrak{D }^{\alpha}_{d-})\subset\mathrm{D}\left( [\mathfrak{D}_{0+}^{ \alpha}]^{*}\right).$    Let us prove  that  $\mathrm{D}\left( [\mathfrak{D}_{0+}^{ \alpha}]^{*}\right)\subset
\mathrm{D}(\mathfrak{D }^{\alpha}_{d-}).$
In accordance with the definition of  adjoint  operator, we have
\begin{equation*}
\left(\mathfrak{D}^{ \alpha }_{0+} f,g \right)_{L_{2}(\Omega)}=\left(   f, [\mathfrak{D}_{0+}^{ \alpha} g]^{*}  \right)_{L_{2}(\Omega)},\,f\in \mathrm{D}(\mathfrak{D}^{ \alpha }_{0+}),\,g\in \mathrm{D}\left( [\mathfrak{D}_{0+}^{ \alpha}]^{*}\right).
\end{equation*}
 Note that since  $\mathrm{R}(\mathfrak{D }^{\alpha}_{d-})=L_{2}(\Omega),$ then  $\mathrm{R}
\left([\mathfrak{D}_{0+}^{ \alpha}]^{*}\right)=L_{2}(\Omega).$ Using the Fubini  theorem,  it can be  easily shown that
$$
\left( \mathfrak{D}^{ \alpha }_{0+} f,g- \mathfrak{I } ^{\alpha}_{d-}[\mathfrak{D}_{0+}^{ \alpha}]^{*}g    \right)_{L_{2}(\Omega)}=0.
$$
By virtue  of Theorem \ref{T3}, we have  $\mathrm{R}(\mathfrak{D}^{ \alpha }_{0+})=L_{2}(\Omega).$    Hence    $ g= \mathfrak{I } ^{\alpha}_{d-}[\mathfrak{D}_{0+}^{ \alpha}]^{*}g $ a.e.
It implies that $\mathrm{D}\left([\mathfrak{D}_{0+}^{ \alpha}]^{*}\right)\subset\mathrm{D}\left(\mathfrak{D }^{\alpha}_{d-}\right).$
\end{proof}

\section{ Strictly accretive  property}
 The  following theorem establishes the strictly accretive  property  (see \cite[p. 352]{firstab_lit:kato1966}) of the    Kipriyanov  fractional differential operator.\\\\

\begin{theorem}\label{T5}
  Suppose  $\rho(Q)$ is a real non-negative function,   $\rho\in{\rm Lip}\, \lambda,\; \lambda>\alpha;$
  then the following inequality    holds
\begin{equation}\label{28}
 {\rm Re} ( f,\mathfrak{D}^{\alpha}f)_{L_2(\Omega,\rho)}\geq\mu\|f\|^{2}_{L_2(\Omega,\rho)},\;f\in H^{1}_{0} (\Omega),
\end{equation}
where
$$
\mu=\frac{1}{2}\mathfrak{d}^{-\alpha} \left(  \Gamma^{-1}(1-\alpha) +C_{n}^{(\alpha)}\right)-
\frac{\alpha M  \mathfrak{d}^{\lambda-\alpha}  }{2\Gamma(1-\alpha)(\lambda-\alpha)\inf  \rho}\,.
$$
Moreover, if  we have in additional that for any fixed direction $\mathbf{e}$ the function  $\rho$ is   monotonically  non-increasing,   then
$$
\mu=\frac{1}{2}\mathfrak{d}^{-\alpha} \left(  \Gamma^{-1}(1-\alpha) +C_{n}^{(\alpha)}\right).
$$

\end{theorem}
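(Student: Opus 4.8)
The plan is to reduce \eqref{28} to a one–dimensional computation along the rays issuing from $P$, rewrite the weighted form as half the weighted scalar product of $\rho$ with the Kipriyanov derivative of $|f|^{2}$, and then transpose $\mathfrak{D}^{\alpha}$ onto $\rho$ to estimate it pointwise. Concretely, I would first write $(f,\mathfrak{D}^{\alpha}f)_{L_{2}(\Omega,\rho)}=\int_{\Omega}f\,\overline{\mathfrak{D}^{\alpha}f}\,\rho\,dQ$, take real parts, and — with $w:=|f|^{2}$ — use the elementary identity $\mathrm{Re}\big[f(Q)(\overline{f(Q)}-\overline{f(T)})\big]=\tfrac12(w(Q)-w(T))+\tfrac12|f(Q)-f(T)|^{2}$ together with the definition of $\mathfrak{D}^{\alpha}$ to obtain, along each ray,
\begin{equation*}
\mathrm{Re}\big[f(Q)\,\overline{\mathfrak{D}^{\alpha}f(Q)}\big]=\tfrac12\,\mathfrak{D}^{\alpha}w(Q)+\tfrac12 C_{n}^{(\alpha)}w(Q)\,r^{-\alpha}+\tfrac{\alpha}{2\Gamma(1-\alpha)}\int_{0}^{r}\frac{|f(Q)-f(T)|^{2}}{(r-t)^{\alpha+1}}\Big(\frac{t}{r}\Big)^{n-1}dt .
\end{equation*}
Since $\rho\ge0$, integrating over $\Omega$ by means of \eqref{1} and discarding the nonnegative last term gives
\begin{equation*}
\mathrm{Re}(f,\mathfrak{D}^{\alpha}f)_{L_{2}(\Omega,\rho)}\ \ge\ \tfrac12\int_{\Omega}\rho\,\mathfrak{D}^{\alpha}w\,dQ\ +\ \tfrac12 C_{n}^{(\alpha)}\int_{\Omega}\rho\,w\,r^{-\alpha}\,dQ\ \ge\ \tfrac12\int_{\Omega}\rho\,\mathfrak{D}^{\alpha}w\,dQ\ +\ \tfrac12 C_{n}^{(\alpha)}\mathfrak d^{-\alpha}\|f\|^{2}_{L_{2}(\Omega,\rho)} ,
\end{equation*}
using $0<r\le\mathfrak d$ and $\rho w\ge 0$ in the last step.

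So the whole matter reduces to bounding $\int_{\Omega}\rho\,\mathfrak{D}^{\alpha}w\,dQ$ from below, where $w=|f|^{2}\ge0$ and $w|_{\partial\Omega}=0$. By density I would assume $f\in C_{0}^{\infty}(\Omega)$ (both sides of \eqref{28} are continuous in $f\in H^{1}_{0}(\Omega)$, the right-hand side because $\mathfrak{D}^{\alpha}\colon H^{1}_{0}(\Omega)\to L_{2}(\Omega)$ is bounded — Theorem \ref{T4} with $\rho\equiv1$ and Lemma \ref{L1}); then $w\in C_{0}^{\infty}(\Omega)$ has support at positive distance from $\partial\Omega$, $\mathfrak{D}^{\alpha}w=\mathfrak{D}^{\alpha}_{0+}w$ by Lemma \ref{L1}, and $w\in\mathfrak{I}^{\alpha}_{0+}(L_{2})$ by Theorem \ref{T4}. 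Repeating the Fubini computation \eqref{27} of Lemma \ref{L2} — which remains legitimate because $w$ has compact support inside $\Omega$, even though $\rho\notin\mathfrak{I}^{\alpha}_{d-}(L_{2})$ in general — transposes the operator onto $\rho$:
\begin{equation*}
\int_{\Omega}\rho\,\mathfrak{D}^{\alpha}w\,dQ=\int_{\Omega}w\,\mathfrak{D}^{\alpha}_{d-}\rho\,dQ ,\qquad
\mathfrak{D}^{\alpha}_{d-}\rho(Q)=\frac{\rho(Q)}{\Gamma(1-\alpha)(d(\mathbf e)-r)^{\alpha}}+\frac{\alpha}{\Gamma(1-\alpha)}\int_{r}^{d(\mathbf e)}\frac{\rho(Q)-\rho(P+\mathbf e t)}{(t-r)^{\alpha+1}}\,dt ,
\end{equation*}
the last integral converging since $|\rho(Q)-\rho(P+\mathbf e t)|\le M(t-r)^{\lambda}$ and $\lambda>\alpha$.

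Using $(d(\mathbf e)-r)^{-\alpha}\ge\mathfrak d^{-\alpha}$ and $\int_{r}^{d(\mathbf e)}(t-r)^{\lambda-\alpha-1}dt=(d(\mathbf e)-r)^{\lambda-\alpha}/(\lambda-\alpha)\le\mathfrak d^{\lambda-\alpha}/(\lambda-\alpha)$, one gets
\begin{equation*}
\mathfrak{D}^{\alpha}_{d-}\rho(Q)\ \ge\ \frac{\mathfrak d^{-\alpha}}{\Gamma(1-\alpha)}\,\rho(Q)-\frac{\alpha M\,\mathfrak d^{\lambda-\alpha}}{\Gamma(1-\alpha)(\lambda-\alpha)} ;
\end{equation*}
multiplying by $w\ge0$, integrating, and using $\int_{\Omega}w\,dQ\le(\inf\rho)^{-1}\|f\|^{2}_{L_{2}(\Omega,\rho)}$ together with $\int_{\Omega}\rho w\,dQ=\|f\|^{2}_{L_{2}(\Omega,\rho)}$ gives $\int_{\Omega}\rho\,\mathfrak{D}^{\alpha}w\,dQ\ge\big(\mathfrak d^{-\alpha}\Gamma^{-1}(1-\alpha)-\alpha M\mathfrak d^{\lambda-\alpha}\Gamma^{-1}(1-\alpha)(\lambda-\alpha)^{-1}(\inf\rho)^{-1}\big)\|f\|^{2}_{L_{2}(\Omega,\rho)}$. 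Substituting this into the inequality of the first paragraph and adding the $\tfrac12 C_{n}^{(\alpha)}\mathfrak d^{-\alpha}\|f\|^{2}$ term yields precisely $\mathrm{Re}(f,\mathfrak{D}^{\alpha}f)_{L_{2}(\Omega,\rho)}\ge\mu\|f\|^{2}_{L_{2}(\Omega,\rho)}$ with the stated $\mu$. If in addition $\rho$ is non-increasing along each $\mathbf e$, then $\rho(Q)-\rho(P+\mathbf e t)\ge0$ for $t>r$, hence $\mathfrak{D}^{\alpha}_{d-}\rho(Q)\ge\Gamma^{-1}(1-\alpha)\mathfrak d^{-\alpha}\rho(Q)$, the correction term drops out, and $\mu=\tfrac12\mathfrak d^{-\alpha}(\Gamma^{-1}(1-\alpha)+C_{n}^{(\alpha)})$.

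The step I expect to be the real obstacle is the transposition $\int_{\Omega}\rho\,\mathfrak{D}^{\alpha}w\,dQ=\int_{\Omega}w\,\mathfrak{D}^{\alpha}_{d-}\rho\,dQ$: since $\rho$ need not vanish on $\partial\Omega$ it is in general not a fractional integral of an $L_{2}$ function, so Lemma \ref{L2} cannot be invoked verbatim — the point is that after reducing to $w\in C_{0}^{\infty}(\Omega)$ with compact support, the weight $(d(\mathbf e)-r)^{-\alpha}$ stays bounded on $\mathrm{supp}\,w$, all the double integrals occurring in \eqref{27} converge absolutely, and the Fubini interchange (together with the pointwise limit $\mathfrak{D}^{\alpha}_{d-,\varepsilon}\rho\to\mathfrak{D}^{\alpha}_{d-}\rho$, for which $\lambda>\alpha$ is used) goes through. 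The remaining ingredients — the algebraic identity, the polar formula \eqref{1}, the density reduction, and the elementary lower bound for $\mathfrak{D}^{\alpha}_{d-}\rho$ — are routine.
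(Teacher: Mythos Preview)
Your argument is correct and follows essentially the same line as the paper's own proof: the algebraic identity $\mathrm{Re}\,a(\bar a-\bar b)=\tfrac12(|a|^{2}-|b|^{2})+\tfrac12|a-b|^{2}$, discarding the nonnegative quadratic term, transposing the fractional derivative via the Fubini computation behind Lemma~\ref{L2}, the Lipschitz bound on~$\rho$, and the density step from $C_{0}^{\infty}$ to $H^{1}_{0}$ are exactly the ingredients the paper uses.

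The only organisational difference is where you place~$\rho$. The paper puts $\rho$ \emph{inside} the derivative, writing $\rho f\,\mathfrak{D}^{\alpha}f=\tfrac12\mathfrak{D}^{\alpha}(\rho f^{2})+I_{1}+I_{2}+I_{3}$ and then transposing onto the constant~$1$ (so $\int\mathfrak{D}^{\alpha}(\rho f^{2})=\int(\rho f^{2})\,\mathfrak{D}^{\alpha}_{d-}1$), with the Lipschitz correction appearing as a separate term~$I_{2}$; you keep $\rho$ \emph{outside}, transpose onto~$\rho$ itself, and absorb the Lipschitz correction into the pointwise lower bound for $\mathfrak{D}^{\alpha}_{d-}\rho$. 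The two packagings yield the same constant~$\mu$. Your version is marginally cleaner in that it handles the complex case directly via $w=|f|^{2}$ (the paper first reduces to real~$f$ and then recombines via ${\rm Re}(f,\mathfrak{D}^{\alpha}f)=(u,\mathfrak{D}^{\alpha}u)+(v,\mathfrak{D}^{\alpha}v)$), and you are explicit about why the transposition is legitimate even though $\rho\notin\mathfrak{I}^{\alpha}_{d-}(L_{2})$ --- the compact support of~$w$ keeps the $(d-r)^{-\alpha}$ weight bounded and makes the Fubini interchange valid. The paper's route has the minor advantage that transposing onto~$1$ is an exact evaluation rather than an inequality, isolating the Lipschitz loss in a single visible term~$I_{2}$.
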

\begin{proof}
Consider a real  case and let $f\in C_{0}^{\infty}(\Omega),$ we have
\begin{equation}\label{28.0}
\rho(Q)f(Q)( \mathfrak{D}^{\alpha}f )(Q)=\frac{1}{2}( \mathfrak{D}^{\alpha}\rho f^{2} )(Q)  
$$
$$
+\frac{\alpha}{2\Gamma(1-\alpha)} \int\limits_{0}^{r} \frac{\rho(Q)[f (Q)- f(T)]^{2}}{(r - t)^{\alpha+1}}
\left(\frac{t}{r}\right)^{n-1}\!\!\!dt 
$$
$$
+\frac{\alpha}{2\Gamma(1-\alpha)} \int\limits_{0}^{r} \frac{f^{2}(T)[\rho (T)- \rho(Q)] }{(r - t)^{\alpha+1}}
\left(\frac{t}{r}\right)^{n-1}\!\!\!dt+ \frac{C_{n}^{\alpha}}{2}(\rho f^{2})(Q)r^{-\alpha} 
$$
$$
=I_{0}(Q)+I_{1}(Q)+I_{2}(Q)+I_{3}(Q).
\end{equation}
Applying Theorem \ref{4}, we have
\begin{equation}\label{28.1}
\int\limits_{\Omega} I_{0}(Q)dQ=\frac{1}{2}\int\limits_{\Omega}(\mathfrak{D}^{\alpha}_{d-}1)(Q)   (\rho f^{2} )(Q)dQ  
$$
$$
=\frac{1}{2\Gamma(1-\alpha)}\int\limits_{\Omega}(d(\mathbf{e})-r)^{-\alpha}    (\rho f^{2} )(Q)dQ\geq \frac{\mathfrak{d}^{-\alpha}}{2\Gamma(1-\alpha)}\|f\|^{2}_{L_{2}(\Omega,\rho)}.
\end{equation}

Using the Fubini theorem, it can be shown in the usual way that
\begin{equation}\label{28.2}
\left|\int\limits_{\Omega} I_{2}(Q)dQ\right|\leq\frac{\alpha}{2\Gamma(1-\alpha)}\int\limits_{\omega}d \chi \int\limits_{0}^{d(\mathbf{e})}r^{n-1}dr \int\limits_{0}^{r} \frac{f^{2}(T)|\rho (T)- \rho(Q)| }{(r - t)^{\alpha+1}}
\left(\frac{t}{r}\right)^{n-1}\!\!\!dt 
$$
$$
=\frac{\alpha}{2\Gamma(1-\alpha)}\int\limits_{\omega}d \chi \int\limits_{0}^{d(\mathbf{e})}f^{2}(T) t^{n-1}dt \int\limits_{t}^{d(\mathbf{e})} \frac{|\rho (T)- \rho(Q)| }{(r - t)^{\alpha+1}}
 dr 
$$
$$
=\frac{\alpha}{2\Gamma(1-\alpha)}\int\limits_{\omega}d \chi \int\limits_{0}^{d(\mathbf{e})}f^{2}(T) t^{n-1}dt \int\limits_{0}^{d(\mathbf{e})-t}
\frac{|\rho (Q-\tau \mathbf{e})- \rho(Q)| }{\tau^{\alpha+1}}
 d\tau 
$$
$$
\leq\frac{\alpha M}{2\Gamma(1-\alpha)}\int\limits_{\omega}d \chi \int\limits_{0}^{d(\mathbf{e})}f^{2}(T) t^{n-1}dt \int\limits_{0}^{d(\mathbf{e})-t} \tau^{\lambda-\alpha+1}
 d\tau  
$$
$$
\leq\frac{\alpha M  \mathfrak{d}^{\lambda-\alpha}}{2\Gamma(1-\alpha)(\lambda-\alpha)}\|f\|^{2}_{L_{2}(\Omega)} .
\end{equation}
Consider
\begin{equation}\label{28.3}
\int\limits_{\Omega} I_{3}(Q)dQ=C^{(\alpha)}_{n}\int\limits_{\Omega}(\rho f^{2})(Q)r^{-\alpha}dQ\geq\frac{ C^{(\alpha)}_{n} \mathfrak{d}^{-\alpha}}{2}\|f\|^{2}_{L_{2}(\Omega,\rho)}.
\end{equation}
 Combining \eqref{28.0},\eqref{28.1},\eqref{28.2},\eqref{28.3}, and the fact that  $I_{1} $ is  non-negative, we obtain
\begin{equation}\label{28.4}
 ( f,\mathfrak{D}^{\alpha}f)_{L_2(\Omega,\rho)}\geq\mu\|f\|^{2}_{L_2(\Omega,\rho)},\;f\in C^{\infty}_{0} (\Omega).
\end{equation}
In the case when  for any fixed direction $\mathbf{e}$ the function  $\rho$ is   monotonically  non-increasing, we have    $I_{2}\geq 0.$ Hence \eqref{28.4} is   fulfilled. Now assume    that   $f\in H_{0}^{1} (\Omega),$ then  there exists   a sequence $\{f_k\}\in C^{\infty}_{0}(\Omega),\,
f_k\stackrel {H_{0}^1 }{\longrightarrow}f.
$
Using this fact,  it is not hard to prove that
$
f_k\stackrel {L_{2}(\Omega,\rho) }{\longrightarrow}f.
$
Using   inequality \eqref{3}, we prove  that
$
\|\mathfrak{D}^{\alpha} f \|_{L_{2}(\Omega,\rho)}   \leq C  \| f \|_{H_0 ^1(\Omega)}.
$
Therefore
$
 \mathfrak{D}^{\alpha}f_k\stackrel {L_{2}(\Omega,\rho) }{\longrightarrow} \mathfrak{D}^{\alpha}f.
$
Hence using the continuity property of the inner product, we get
$$
( f_k,\mathfrak{D}^{\alpha}f_k)_{L_{2}(\Omega,\rho)}\rightarrow ( f ,\mathfrak{D}^{\alpha}f )_{L_{2}(\Omega,\rho)} .
$$
Passing to the limit on the left and right side of inequality \eqref{28.4}, we obtain
\begin{equation}\label{28.5}
   ( f,\mathfrak{D}^{\alpha}f)_{L_2(\Omega,\rho)}\geq\mu\|f\|^{2}_{L_2(\Omega,\rho)},\;f\in H^{1}_{0} (\Omega).
\end{equation}
 Now let us  consider the  complex  case. Note that the following   equality is true
\begin{equation}\label{31}
{\rm Re} ( f,\mathfrak{D}^{\alpha}f)_{L_2(\Omega,\rho)}= ( u,\mathfrak{D}^{\alpha}u)_{L_2(\Omega,\rho)}+
( v,\mathfrak{D}^{\alpha}v)_{L_2(\Omega,\rho)},\;u={\rm Re}f,\,v={\rm Im}f.
\end{equation}

Combining   \eqref{31}, \eqref{28.5},  we obtain  \eqref{28}.
 \end{proof}
 \section{  Sectorial property }
Consider a uniformly elliptic operator with real   coefficients and the Kipriyanov fractional derivative    in the final term
\begin{equation*}
 Lu:=-  D_{j} ( a^{ij} D_{i}u)  +\rho\, \mathfrak{D}^{ \alpha }u,\;\; (i,j=\overline{1,n}) ,
 \end{equation*}
 $$
 \; \mathrm{D}(L)=H^{2}(\Omega)\cap H^{1}_{0}(\Omega),
 $$
 \begin{equation}\label{33}
 a^{ij}(Q)\in C^{1}(\bar{\Omega})  ,\,a^{ij}\xi _{i}  \xi _{j}  \geq a_{0} |\xi|^{2} ,\,a_{0}>0,
\end{equation}
 \begin{equation}\label{34}
\rho(Q)>0,\;\rho(Q)\in {\rm Lip\,\lambda},\,\alpha<\lambda\leq1.
\end{equation}
We assume in additional that $\mu>0,$ here we use  the denotation  that is used in Theorem \eqref{T5}.
We also   consider the formal adjoint  operator
\begin{equation*}
 L^{+}u:=-  D_{i} ( a^{ij} D_{j}u)  + \mathfrak{D}  ^{\alpha}_{ d -}\rho u  ,
 \end{equation*}
 $$
 \;\mathrm{D}(L^{+})=\mathrm{D}(L),
 $$
 and   the   operator
\begin{equation*}
  H=\frac{1}{2}(L+L^{+}).
 \end{equation*}
 We   use  a special case of the Green  formula
 \begin{equation}\label{35}
-\int\limits_{\Omega}D_{j}(a^{ij}D_{i}u)\,\bar{v}\, dQ=\int\limits_{\Omega}a^{ij}D_{i}u\, \overline{D_{j}v}\,  dQ\,,\;u\in H^{2}(\Omega),v\in H_{0}^{1}(\Omega) .
\end{equation}
\begin{remark}\label{L3}
  The operators $L,L^{\!+}\!,H$  are closeable. We can easily check this fact,  if we apply    Theorem 3.4 \cite[p.337]{firstab_lit:kato1966}.
\end{remark}
We have the following lemma.
\begin{theorem}
\label{T6}
 The operators $\tilde{L},\,\tilde{L}^{+}$  are  strictly  accretive, their  numerical range  belongs to  the sector
\begin{equation*}
   \mathfrak{S}:= \{\zeta\in\mathbb{C}: \,|{\rm arg }\,(\zeta-\gamma)|\leq\theta\},
\end{equation*}
 where $\theta$ and $\gamma$ are defined by the coefficients of the operator $L.$
\end{theorem}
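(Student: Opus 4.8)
The strategy is to estimate the numerical range of $\tilde L$ directly on $\mathrm{D}(L)=H^2(\Omega)\cap H^1_0(\Omega)$, then extend to the closure by continuity. Fix $u\in\mathrm{D}(L)$ and write, via the Green formula \eqref{35},
\begin{equation*}
(Lu,u)_{L_2(\Omega)}=\int_\Omega a^{ij}D_iu\,\overline{D_ju}\,dQ+(\rho\,\mathfrak D^\alpha u,u)_{L_2(\Omega)}
=\int_\Omega a^{ij}D_iu\,\overline{D_ju}\,dQ+(\mathfrak D^\alpha u,u)_{L_2(\Omega,\rho)}.
\end{equation*}
First I would treat the real part. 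By the ellipticity assumption \eqref{33} the first term is bounded below by $a_0\|\nabla u\|_{L_2(\Omega)}^2\ge 0$, and by Theorem \ref{T5} (applicable since $\rho$ is a non-negative $\mathrm{Lip}\,\lambda$ function with $\lambda>\alpha$, and $H^2\cap H^1_0\subset H^1_0$) the second term satisfies ${\rm Re}\,(\mathfrak D^\alpha u,u)_{L_2(\Omega,\rho)}\ge\mu\|u\|_{L_2(\Omega,\rho)}^2\ge\mu\,(\inf\rho)\|u\|_{L_2(\Omega)}^2$. Hence ${\rm Re}\,(Lu,u)_{L_2(\Omega)}\ge\gamma_0\|u\|_{L_2(\Omega)}^2$ with $\gamma_0:=\mu\inf\rho>0$ (using $\mu>0$ from \eqref{34}'s accompanying assumption), which gives strict accretivity; the same computation applies verbatim to $L^+$ after noting $(\mathfrak D^\alpha_{d-}\rho u,u)_{L_2(\Omega)}=\overline{(\mathfrak D^\alpha_{0+}u,\rho u)_{L_2(\Omega)}}$ by Lemma \ref{L2} (interpreting $\mathfrak D^\alpha$ as a restriction of $\mathfrak D^\alpha_{0+}$ via Lemma \ref{L1}), so that ${\rm Re}\,(L^+u,u)={\rm Re}\,(Lu,u)$.

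Next I would bound the imaginary part to get the sector. Since the $a^{ij}$ are real, $\int_\Omega a^{ij}D_iu\,\overline{D_ju}\,dQ$ need not be real for complex $u$, but its imaginary part is controlled: $|{\rm Im}\int_\Omega a^{ij}D_iu\,\overline{D_ju}\,dQ|\le C_a\|\nabla u\|_{L_2(\Omega)}^2$ for a constant $C_a$ depending only on the $C^1(\bar\Omega)$ norms of the $a^{ij}$ (in fact $C_a$ can be taken as $\max_{i,j}\|a^{ij}\|_{C(\bar\Omega)}$ times a dimensional factor, after symmetrizing). For the fractional term I would use the boundedness estimate \eqref{3}: writing $u=\mathfrak I^\alpha_{0+}$-representable is not available here, but \eqref{3} with $p=q=2$, $l=1$ gives $\|\mathfrak D^\alpha u\|_{L_2(\Omega)}\le C\|u\|_{H^1_0(\Omega)}$, whence by Cauchy--Schwarz $|(\mathfrak D^\alpha u,u)_{L_2(\Omega,\rho)}|\le (\sup\rho)\,C\,\|u\|_{H^1_0(\Omega)}\|u\|_{L_2(\Omega)}\le C'(\|\nabla u\|_{L_2(\Omega)}^2+\|u\|_{L_2(\Omega)}^2)$. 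Combining, $|{\rm Im}\,(Lu,u)|\le C''(\|\nabla u\|_{L_2(\Omega)}^2+\|u\|_{L_2(\Omega)}^2)$, while from the real-part lower bound $a_0\|\nabla u\|_{L_2(\Omega)}^2\le{\rm Re}\,(Lu,u)-\gamma_0\|u\|_{L_2(\Omega)}^2$. Eliminating $\|\nabla u\|^2$ yields $|{\rm Im}\,(Lu,u)|\le \tan\theta\,\big({\rm Re}\,(Lu,u)-\gamma\|u\|_{L_2(\Omega)}^2\big)$ for suitable $\gamma\le\gamma_0$ and $\theta\in[0,\pi/2)$ depending only on $a_0$, the $C^1$-norms of $a^{ij}$, $\sup\rho$, $\inf\rho$, $\mu$, $\alpha$, $\mathfrak d$ — i.e. on the coefficients of $L$. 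This is exactly the statement that $\Theta(L)\subset\mathfrak S$, and the identical argument handles $L^+$.

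Finally, the passage from $L,L^+$ to their closures $\tilde L,\tilde L^+$ is routine: if $u_k\in\mathrm{D}(L)$, $u_k\to u$ and $Lu_k\to Lu$ in $L_2(\Omega)$, then $(Lu_k,u_k)\to(\tilde Lu,u)$, so every point of $\Theta(\tilde L)$ is a limit of points of $\Theta(L)$, and since $\mathfrak S$ is closed, $\Theta(\tilde L)\subset\mathfrak S$; strict accretivity is preserved in the limit in the same way, giving $\tilde L+\kappa$ accretive for small $\kappa>0$. I expect the main obstacle to be the imaginary-part bound on the fractional term done cleanly enough that the resulting $\theta$ is genuinely strictly less than $\pi/2$: the crude Cauchy--Schwarz bound above mixes $\|\nabla u\|^2$ and $\|u\|^2$, and one must be careful that after absorbing $\|\nabla u\|^2$ into the real part via $a_0$, the coefficient of the leftover $\|u\|^2$ can be pushed into the $\gamma$-shift rather than spoiling the angle — this works precisely because the real part has the strictly positive floor $\gamma_0\|u\|^2$ coming from Theorem \ref{T5}, so there is room to trade. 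A secondary point requiring care is justifying the identity ${\rm Re}\,(Lu,u)={\rm Re}\,(L^+u,u)$ for the fractional terms, which rests on Lemma \ref{L2} together with the fact that $\rho u$ lies in the appropriate representable class (Theorem \ref{T4} with $f\in H^1_0(\Omega)$).
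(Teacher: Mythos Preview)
Your approach is essentially identical to the paper's: Green's formula plus ellipticity and Theorem~\ref{T5} for the real-part lower bound, inequality \eqref{3} to control the fractional contribution to the imaginary part, elimination of $\|\nabla u\|^{2}$ via the real-part estimate to obtain the sector, and passage to the closure by continuity. The only differences worth noting are that the paper applies \eqref{3} with an exponent $q\in\bigl(2,\,2n/(2\alpha-2+n)\bigr)$ --- your choice $p=q=2$ violates the hypothesis $q>p$ in \eqref{2}, though the $L_{2}$ bound you want follows at once from the $L_{q}$ bound on the bounded domain --- and that the paper carries Young-inequality parameters $\varepsilon,\delta$ through the imaginary-part estimate to produce explicit formulas for $\gamma$ and $\theta$, whereas your cruder Cauchy--Schwarz bound suffices for the qualitative statement.
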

\begin{proof}
Consider the operator $L .$ It is not hard to prove that
\begin{equation}\label{37}
-{\rm Re} \left( D_{j} [a^{ij} D_{i}f] ,f\right)_{L_{2}(\Omega)}\geq a_{0}\|   f  \|^{2}_{L^{1}_{2}(\Omega)},\;f\in \mathrm{D}(L).
\end{equation}
Hence
\begin{equation}\label{40}
  {\rm Re}  (f_{n}, L f_{n}  )_{L_{2}(\Omega)}\geq a_{0}\|   f_{n} \|^{2}_{L^{1}_{2}(\Omega)}+ {\rm Re}(f_{n},\mathfrak{D}^{\alpha}f_{n})_{L_2(\Omega,\rho)} ,\;\{f_{n}\}\subset\mathrm{D}(L).
\end{equation}
Assume that  $ f\in \mathrm{D}(\tilde{L}).$ In accordance with the definition, there  exists a sequence $\{f_{n}\}\subset\mathrm{D}(L),\,f_{n}\xrightarrow[  L ]{}f.$  By virtue of  \eqref{40},  we easily  prove that  $f\in H_{0}^{1}(\Omega).$  Using the continuity property of the inner product, we pass to the limit on the left and right side of inequality \eqref{40}. Thus, we have
\begin{equation}\label{41}
 {\rm Re}( f , \tilde{L} f   )_{L_{2}(\Omega)}\geq a_{0}\|   f  \|^{2}_{L^{1}_{2}(\Omega)}+ {\rm Re}( f ,\mathfrak{D}^{\alpha}f )_{L_2(\Omega,\rho)} ,\; f  \in\mathrm{D}(\tilde{L}).
\end{equation}
 By virtue of Theorem \ref{T5}, we can rewrite the previous inequality as follows
 \begin{equation}\label{42}
  {\rm Re}( f,\tilde{L}f  )_{L_{2}(\Omega)}\geq a_{0}\|f\|^{2}_{L^{1}_{2}(\Omega)}+\mu\|f\|^{2}_{L_{2}(\Omega,\rho)} ,\;f\in \mathrm{D}(\tilde{L}).
\end{equation}
Applying the     Friedrichs inequality    to the first summand of   the right side, we  get
  \begin{equation}\label{43}
  {\rm Re}  ( f,\tilde{L}f )_{L_{2}(\Omega)}\geq \mu_{1}\|f\|^{2}_{L_{2}(\Omega)},\;
 f  \in\mathrm{D}(\tilde{L}),\; \mu_{1}=a_{0}+\mu\inf\rho(Q)  .
 \end{equation}
Consider the  imaginary component   of the form, generated by the operator $L$  
\begin{equation}\label{44}
\left|{\rm Im} ( f,Lf    )_{L_{2}(\Omega)}\right|\leq   \left|\int\limits_{\Omega}
\left(a^{ij}  D_{i}u D_{j}v-a^{ij}  D_{i}v D_{j}u\right)dQ\right| 
$$
$$
+\left|  ( u,\mathfrak{D}^{\alpha}v )_{L_{2}(\Omega,\rho)} -( v,\mathfrak{D}^{\alpha}u   )_{L_{2}(\Omega,\rho)}\right|= I_{1}+I_{2}.
 \end{equation}
 Using the Cauchy Schwarz inequality for sums, the Young inequality,  we have
\begin{equation}\label{45}
a^{ij}  D_{i}u D_{j}v\leq a  |Du| |Dv| \leq
 \frac{a}{2}  \left(
|Du|^{2} +|Dv|^{2} \right),a(Q)\!=\!\left(\sum\limits_{i,j=1}^{n}
|a_{ij}(Q)|^{2} \right)^{\!\!1/2}\!\!.
\end{equation}
Hence
\begin{equation}\label{45.1}
 I_{1}\leq   a_{1}\|f\|^{2}_{L^{1}_{2}(\Omega)},\;a_{1}=\sup  a(Q)  .
\end{equation}
 Applying    inequality \eqref{3}, the Young inequality,    we get
\begin{equation}\label{46}
\left|( u,\mathfrak{D}^{\alpha}v )_{L_{2}(\Omega,p)}\right|\leq C_{1}\|u\|_{L_{2}(\Omega)}\|\mathfrak{D}^{\alpha}v\|_{L_{q}(\Omega)} 
$$
$$
 \leq
C_{1} \|u\|_{L_{2}(\Omega)}\left\{\frac{K}{\delta^{\nu}}\|v\|_{L_{2}(\Omega)}+\delta^{1-\nu}\|v\|_{L^{1}_{2}(\Omega)} \right\}  
$$
$$
\leq\frac{1}{\varepsilon} \|u\|^{2}_{L_{2}(\Omega)} + \varepsilon\left(\frac{ KC_{1}}{\sqrt{2}\delta^{\nu}}\right)^{2} \|v\|^{2}_{L_{2}(\Omega)}  +
 \frac{\varepsilon}{2}\left( C_{1}\delta^{1-\nu}\right)^{2} \|v\|^{2}_{L^{1}_{2}(\Omega)},
\end{equation}
where $ 2<q< 2n/(2\alpha-2+n).$
Hence
$$
 I_2   \leq\left|( u,\mathfrak{D}^{\alpha}v )_{L_{2}(\Omega,\rho)}\right|+\left|( v,\mathfrak{D}^{\alpha}u )_{L_{2}(\Omega,\rho)}\right|\leq \frac{1}{\varepsilon}\left(\|u\|^{2}_{L_{2}(\Omega)}+\|v\|^{2}_{L_{2}(\Omega)}\right) 
 $$
 $$
 +\varepsilon\left(\frac{ KC_{1}}{\sqrt{2}\delta^{\nu}}\right)^{2}\left(\|u\|^{2}_{L_{2}(\Omega)}+\|v\|^{2}_{L_{2}(\Omega)} \right)+\frac{\varepsilon}{2}\left( C_{1} \delta^{1-\nu}\right)^{2}\left(\|u\|^{2}_{L^{1}_{2}(\Omega)}+\|v\|^{2}_{L^{1}_{2}(\Omega)} \right) 
 $$
\begin{equation}\label{48}
= \left(\varepsilon \delta^{-2\nu}C_{2}  +\frac{1}{\varepsilon}\right) \|f\|^{2}_{L_{2}(\Omega)} +
 \varepsilon \delta^{2-2\nu} C_{3} \|f\|^{2}_{L^{1}_{2}(\Omega)}  .
\end{equation}
Taking into account  \eqref{44} and combining  \eqref{45.1},  \eqref{48}, we easily prove that
$$
\left|{\rm Im} ( f,\tilde{L}f    )_{L_{2}(\Omega)}\right|\leq
 \left(\varepsilon \delta^{-2\nu}C_{2}  +\frac{1}{\varepsilon}\right)   \|f\|^{2}_{L_{2}(\Omega)} + \left(\varepsilon \delta^{2-2\nu} C_{3}+a_{1}\right)   \|  f\|^{2}_{L_{2}^{1}(\Omega)}, f\in \mathrm{D}(\tilde{L}).
 $$
Thus  by virtue of    \eqref{43}   for an arbitrary number  $k>0,$  the next inequality holds
$$
{\rm Re}( f,\tilde{L}f    )_{L_{2}(\Omega)}-k \left|{\rm Im} ( f,\tilde{L}f    )_{L_{2}(\Omega)}\right|\geq
 \left(a_{0}-k    [\varepsilon\delta^{2-2\nu} C_{3}+a_{1}] \right)\|  f\|^{2}_{L_{2}^{1}(\Omega)} 
 $$
 $$
 +\left( \mu \inf \rho(Q)- k\left[\varepsilon \delta^{-2\nu}C_{2}  +\frac{1}{\varepsilon}\right] \right)\|f\|^{2}_{L_{2}(\Omega)}.
$$
 Choose  $k= a_{0}\left(\varepsilon \delta^{2-2\nu} C_{3}+ a_{1} \right)^{-1}\!\!,$ we get
\begin{equation}\label{49}
\left|{\rm Im} ( f,(\tilde{L}-\gamma) f    )_{L_{2}(\Omega)}\right|   \leq \frac{1}{k}{\rm Re}( f,(\tilde{L}-\gamma)f    )_{L_{2}(\Omega)},
$$
$$
\;\gamma= \mu \inf \rho(Q)- k\left[\varepsilon \delta^{-2\nu}C_{2}  +\frac{1}{\varepsilon}\right].
\end{equation}
This  inequality shows  that the numerical range    $\Theta(\tilde{L})$ belongs to the sector with the top   $\gamma$ and the semi-angle $\theta=\arctan(1/k).$
The prove corresponding  to the operator $\tilde{L}^{+}$ is analogous.
 \end{proof}
We do not   study in detail   the conditions under which  $\gamma>0,$  but we   just   note that relation     \eqref{49} gives us an opportunity to formulate them in an easy way. Further,     we assume that the coefficients of the operator $L$ such that $\gamma>0.$

 \begin{theorem}\label{T7}
   The operators $\tilde{L},\tilde{L}^{+},\tilde{H}$   is m-sectorial, the  operator $\tilde{H}$ is selfadjoint.
\end{theorem}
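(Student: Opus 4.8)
The plan is to exhibit $\tilde L$ as the m-sectorial operator associated, via Kato's first representation theorem \cite[Theorem VI.2.1]{firstab_lit:kato1966}, with the sesquilinear form
\[
\mathfrak{t}[u,v]=\int_{\Omega}a^{ij}D_{i}u\,\overline{D_{j}v}\,dQ+(\mathfrak{D}^{\alpha}u,v)_{L_{2}(\Omega,\rho)},\qquad \mathrm{D}(\mathfrak{t})=H^{1}_{0}(\Omega),
\]
and to identify that operator with $\tilde L$; the statements for $\tilde L^{+}$ and $\tilde H$ will then follow by running the same argument with the adjoint form $\mathfrak{t}^{*}[u,v]=\overline{\mathfrak{t}[v,u]}$ and with the symmetric form $\mathfrak{h}=\frac{1}{2}(\mathfrak{t}+\mathfrak{t}^{*})$, respectively. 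First I would record that $\mathfrak{t}$ is densely defined (clear, since $C_{0}^{\infty}(\Omega)\subset H^{1}_{0}(\Omega)$ is dense in $L_{2}(\Omega)$), sectorial, and closed, so that the representation theorem applies and produces a unique m-sectorial operator $\mathcal{L}$ with $\mathrm{D}(\mathcal{L})\subset H^{1}_{0}(\Omega)$ and $\mathfrak{t}[u,v]=(\mathcal{L}u,v)_{L_{2}(\Omega)}$ for all $u\in\mathrm{D}(\mathcal{L})$, $v\in H^{1}_{0}(\Omega)$; since a densely defined, closed, symmetric, semibounded form generates a self-adjoint operator, the same scheme will yield that $\tilde H$ is self-adjoint.

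For the properties of $\mathfrak{t}$ I would reuse the estimates already obtained in the proof of Theorem \ref{T6}. The real part satisfies ${\rm Re}\,\mathfrak{t}[u]\geq a_{0}\|u\|^{2}_{L^{1}_{2}(\Omega)}+\mu\|u\|^{2}_{L_{2}(\Omega,\rho)}$ by \eqref{37} and Theorem \ref{T5}, hence, via the Friedrichs inequality, ${\rm Re}\,\mathfrak{t}[u]\geq\mu_{1}\|u\|^{2}_{L_{2}(\Omega)}$ as in \eqref{43}; the imaginary part obeys $|{\rm Im}\,\mathfrak{t}[u]|\leq C_{1}\|u\|^{2}_{L_{2}(\Omega)}+C_{2}\|u\|^{2}_{L^{1}_{2}(\Omega)}$ by \eqref{45.1} and \eqref{48}; combining these exactly as in the derivation of \eqref{49} gives $|{\rm Im}\,\mathfrak{t}[u]|\leq\tan\theta\,\big({\rm Re}\,\mathfrak{t}[u]-\gamma\|u\|^{2}_{L_{2}(\Omega)}\big)$, i.e. sectoriality with the vertex and semi-angle of Theorem \ref{T6}. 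Closedness I would get from the equivalence $\|u\|^{2}_{\mathfrak{t}}:={\rm Re}\,\mathfrak{t}[u]+\|u\|^{2}_{L_{2}(\Omega)}\asymp\|u\|^{2}_{H^{1}_{0}(\Omega)}$: the lower bound is \eqref{42} (together with the Friedrichs inequality), while for the upper bound one bounds $|(\mathfrak{D}^{\alpha}u,u)_{L_{2}(\Omega,\rho)}|\leq\|\rho\|_{C(\bar{\Omega})}\|\mathfrak{D}^{\alpha}u\|_{L_{2}(\Omega)}\|u\|_{L_{2}(\Omega)}$ and applies \eqref{3} with $l=1$, $p=2$ (so that $\stackrel{\circ}{W_{2}^{1}}(\Omega)=H^{1}_{0}(\Omega)$), which gives $|\mathfrak{t}[u]|\leq C\|u\|^{2}_{H^{1}_{0}(\Omega)}$; completeness of $H^{1}_{0}(\Omega)$ then forces $\mathfrak{t}$ to be closed.

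To identify $\tilde L=\mathcal{L}$ I would argue by double inclusion. For $u\in\mathrm{D}(L)=H^{2}(\Omega)\cap H^{1}_{0}(\Omega)$ the Green formula \eqref{35} gives $(Lu,v)_{L_{2}(\Omega)}=\mathfrak{t}[u,v]$ for every $v\in H^{1}_{0}(\Omega)$, so $u\in\mathrm{D}(\mathcal{L})$ and $\mathcal{L}u=Lu$; since $\mathcal{L}$ is closed, $\tilde L\subset\mathcal{L}$. Conversely, take $u\in\mathrm{D}(\mathcal{L})$ and set $w=\mathcal{L}u\in L_{2}(\Omega)$; testing $\mathfrak{t}[u,v]=(w,v)_{L_{2}(\Omega)}$ against $v\in C_{0}^{\infty}(\Omega)$ gives $-D_{j}(a^{ij}D_{i}u)=w-\rho\,\mathfrak{D}^{\alpha}u$ in $\mathcal{D}'(\Omega)$, and by \eqref{3} with $l=1$, $p=2$ one has $\mathfrak{D}^{\alpha}u\in L_{q}(\Omega)\subset L_{2}(\Omega)$ for the admissible $q>2$ (recall $\Omega$ is bounded), so the right-hand side lies in $L_{2}(\Omega)$; global $H^{2}$-regularity for the Dirichlet problem with coefficients $a^{ij}\in C^{1}(\bar{\Omega})$ on the convex domain $\Omega$ then yields $u\in H^{2}(\Omega)\cap H^{1}_{0}(\Omega)=\mathrm{D}(L)$, whence $\mathcal{L}\subset\tilde L$. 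Thus $\tilde L=\mathcal{L}$ is m-sectorial. For $\tilde L^{+}$ the form $\mathfrak{t}^{*}$ is again closed and sectorial, and by the Green formula \eqref{35} and Lemmas \ref{L1}--\ref{L2} its associated operator extends $L^{+}$; the same regularity argument gives $\tilde L^{+}=\mathcal{L}^{*}$, m-sectorial. Finally $\mathfrak{h}=\frac{1}{2}(\mathfrak{t}+\mathfrak{t}^{*})$ is densely defined, symmetric, bounded below by $\mu_{1}$, and closed (its form norm is again equivalent to $\|\cdot\|_{H^{1}_{0}(\Omega)}$), so the associated operator is self-adjoint; by the regularity argument it equals $\tilde H$, and, being self-adjoint and bounded below, $\tilde H$ is m-sectorial.

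The step I expect to be the main obstacle is the inclusion $\mathcal{L}\subset\tilde L$, i.e. the elliptic regularity claim: one must be sure that the hypotheses \eqref{33}, \eqref{34} and the convexity (hence Lipschitz property) of $\Omega$ suffice to promote a weak $H^{1}_{0}$-solution of $-D_{j}(a^{ij}D_{i}u)=F$ with $F\in L_{2}(\Omega)$ to an $H^{2}(\Omega)$-solution, and, in the same breath, that the exponent range $2<q<2n/(2\alpha-2+n)$ arising from \eqref{46} is consistent with \eqref{2} for $l=1$, $p=2$, so that indeed $\mathfrak{D}^{\alpha}u\in L_{q}(\Omega)$ with $q>2$. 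The second delicate point is the closedness of $\mathfrak{t}$ — equivalently the two-sided bound $\|u\|_{\mathfrak{t}}\asymp\|u\|_{H^{1}_{0}(\Omega)}$ — which is precisely where the interpolation inequality \eqref{3} is used in its full strength.
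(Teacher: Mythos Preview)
Your route is genuinely different from the paper's. The paper does \emph{not} invoke the first representation theorem here; it proves m-sectoriality of $\tilde L$ (and $\tilde L^{+}$, $\tilde H$) directly by a deficiency--index argument: sectoriality from Theorem~\ref{T6} feeds into Kato's Theorem~3.2 \cite[p.~336]{firstab_lit:kato1966}, giving that $\mathrm{def}(\tilde L-\zeta)$ is constant on $\mathbb{C}\setminus\mathfrak{S}$; one then checks $\mathrm{def}(\tilde L-\zeta_{0})=0$ at a single point $\zeta_{0}$ with $\mathrm{Re}\,\zeta_{0}<0$ by observing that any $u\in C_{0}^{\infty}(\Omega)\cap\mathrm{R}(\tilde L-\zeta_{0})^{\perp}$ is forced to vanish by the strict accretivity estimate \eqref{43}, and then uses density of $C_{0}^{\infty}$. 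The resolvent bound in \eqref{51} is read off from \eqref{43} as well. For $\tilde H$ the same scheme plus symmetry gives self-adjointness via Kato's Theorem~3.16. The form machinery you set up is developed \emph{after} Theorem~\ref{T7} in the paper (Lemmas~\ref{L4}--\ref{L5}) and is used only later, for the resolvent compactness and eigenvalue comparison in Theorems~\ref{T8}--5.5; there the identification $T_{t}=\tilde L$ is obtained for free because $\tilde L$ is already known to be m-sectorial.

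Your argument is sound up to the inclusion $\mathcal L\subset\tilde L$, which you correctly isolate as the crux. The difficulty is real: global $H^{2}$-regularity for the Dirichlet problem $-D_{j}(a^{ij}D_{i}u)=F\in L_{2}(\Omega)$ with variable $C^{1}(\bar\Omega)$ coefficients on a merely convex (hence Lipschitz, possibly non-smooth) domain is not a textbook statement and is nowhere established in the paper. Without it you only get $\tilde L\subsetneq\mathcal L$ a priori, and m-sectoriality of $\tilde L$ does not follow. A cleaner way to close your argument, avoiding regularity altogether, is to show that $\mathrm{D}(L)$ is an operator core for $\mathcal L$: since $\mathcal L$ is m-sectorial, it suffices that $(L+\lambda)\mathrm{D}(L)$ be dense in $L_{2}(\Omega)$ for one $\lambda$ with $-\lambda\notin\mathfrak{S}$ --- but proving that density is essentially the paper's direct deficiency computation. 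In short, your form-theoretic route is legitimate and conceptually clean, but to finish it without an unproved regularity hypothesis you are led back to the paper's own argument.
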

\begin{proof} By virtue  of  Theorem \ref{T6} we have  that the operator $\tilde{L}$ is sectorial i.e.
 $\Theta(L)\subset  \mathfrak{S}.$  Applying Theorem 3.2   \cite[p. 336]{firstab_lit:kato1966}
 we   conclude that $\mathrm{R}(\tilde{L}-\zeta)$ is a closed  space  for any $\zeta\in \mathbb{C}\setminus\mathfrak{S}$ and that the next relation     holds
\begin{equation}\label{50}
  {\rm def}(\tilde{L}-\zeta)=\eta,\; \eta={\rm const} .
 \end{equation} Using   \eqref{43}, it is not hard to prove that   $
  \|\tilde{L}f\|_{L_{2}(\Omega)}\geq \sqrt{\mu_{1}}\|f\|_{L_{2}(\Omega)},\,
 f  \in\mathrm{D}(\tilde{L}).
$ Hence the inverse operator $(\tilde{L}+\zeta)^{-1}$ is defined  on the subspace $\mathrm{R}(\tilde{L}+\zeta),\,{\rm Re}\zeta>0.$     In accordance with    condition (3.38) \cite[p.350]{firstab_lit:kato1966},   we need to show  that
\begin{equation}\label{51}
{\rm def}(\tilde{L}+\zeta)=0,\;\|(\tilde{L}+\zeta)^{-1}\|\leq ({\rm Re}\zeta)^{-1},\;{\rm Re}\zeta>0 .
\end{equation}
Since $\gamma>0,$  then  the left half-plane is  included    in the the set $\mathbb{C}\setminus \mathfrak{S}.$
Note that by virtue of inequality  \eqref{43}, we have
 \begin{equation}\label{52}
  {\rm Re}  ( f,(\tilde{L}-\zeta )f  )_{L_{2}(\Omega)}\geq  (\mu- {\rm Re} \zeta ) \|f\|^{2}_{L_{2}(\Omega)} .
 \end{equation}
 Let $\zeta_{0}\in\mathbb{C}\setminus \mathfrak{S},\;{\rm Re}\zeta_{0} <0.$
  Since the  operator $\tilde{L}-\zeta_{0}$ has a closed range     $\mathrm{R} (\tilde{L}-\zeta_{0}),$ then we have
\begin{equation*}
 L_{2}(\Omega)=\mathrm{R} (\tilde{L}-\zeta_{0})\oplus \mathrm{R} (\tilde{L}-\zeta_{0})^{\perp} .
 \end{equation*}
Note that   $  C^{\infty}_{0}(\Omega)\cap \mathrm{R} (\tilde{L}-\zeta_{0})^{\perp}=0,$   because if we assume   the contrary, then applying inequality  \eqref{52}  for any element
 $u\in C^{\infty}_{0}(\Omega)\cap \mathrm{R}  (\tilde{L}-\zeta_{0})^{\perp},$    we get
 \begin{equation*}
(\mu-{\rm Re}\zeta_{0}) \|u\|^{2}_{L_{2}(\Omega)} \leq {\rm Re} ( u,(\tilde{L}-\zeta_{0})u  )_{L_{2}(\Omega)}=0,
 \end{equation*}
hence $u=0.$ Thus  this fact   implies that
$$
\left(g,v\right)_{L_{2}(\Omega)}=0,\,g\in  \mathrm{R}  (\tilde{L}-\zeta_{0})^{\perp},\, \in C^{\infty}_{0}(\Omega).
$$
Since $  C^{\infty}_{0}(\Omega)$  is a dense set in $L_{2}(\Omega),$ then $\mathrm{R}  (\tilde{L}-\zeta_{0})^{\perp}=0.$ It follows that  ${\rm def} (\tilde{L}-\zeta_{0}) =0.$ Now if we note \eqref{50}
then we  came to the conclusion that ${\rm def} (\tilde{L}-\zeta )=0,\;\zeta\in \mathbb{C}\setminus\mathfrak{S}.$  Hence ${\rm def} (\tilde{L}+\zeta )=0,\;{\rm Re}\zeta>0.$   Thus the proof  of  the first relation of  \eqref{51} is complete.
To prove the second relation \eqref{51} we should  note that
 \begin{equation*}
(\mu +{\rm Re}\zeta ) \|f\|^{2}_{L_{2}(\Omega)} \leq {\rm Re} ( f,(\tilde{L}+\zeta )f  )_{L_{2}(\Omega)}\leq \|f\|_{L_{2}(\Omega)}\|(\tilde{L}+\zeta )\|_{L_{2}(\Omega)},
 \end{equation*}
 $$
 \;f\in \mathrm{D}(\tilde{L}),\;
{\rm Re}\zeta>0 .
 $$
Using    first relation   \eqref{51}, we have
 \begin{equation*}
\|(\tilde{L}+\zeta )^{-1}g\|_{L_{2}(\Omega)}     \leq(\mu  +{\rm Re}\,\zeta ) ^{-1} \|g\|_{L_{2}(\Omega)}\leq ( {\rm Re}\,\zeta ) ^{-1} \|g\|_{L_{2}(\Omega)},\,g\in L_{2}(\Omega).
 \end{equation*}
 This implies that
$$
\|(\tilde{L}+\zeta )^{-1} \| \leq( {\rm Re}\,\zeta ) ^{-1},\,{\rm Re}\zeta>0.
$$
This concludes the proof  corresponding to the operator  $\tilde{L}.$  The proof  corresponding to the operator  $\tilde{L}^{+}$ is     analogous.
Consider the  operator $\tilde{H}.$ It is obvious that   $ \tilde{H}  $ is a symmetric operator. Hence   $ \Theta(\tilde{H})\subset \mathbb{R}.  $ Using \eqref{40} and arguing as above, we see that
 \begin{equation}\label{52.0}
    ( f,\tilde{H}f  )_{L_{2}(\Omega)}\geq  \mu_{1}  \|f\|^{2}_{L_{2}(\Omega)} .
 \end{equation}
 Continuing the used above  line of reasoning  and applying Theorem 3.2 \cite[p.336]{firstab_lit:kato1966},  we see that
  \begin{equation}\label{52.1}
{\rm def}(\tilde{H}-\zeta )=0,\,{\rm Im }\zeta\neq 0;
\end{equation}
  \begin{equation}\label{52.2}
   {\rm def}(\tilde{H}+\zeta)=0,\;\|(\tilde{H}+\zeta)^{-1}\|\leq ({\rm Re}\zeta)^{-1},\;{\rm Re}\zeta>0 .
\end{equation}
Combining \eqref{52.1} with   Theorem 3.16  \cite[p.340]{firstab_lit:kato1966}, we conclude that  the operator $\tilde{H}$ is selfadjoint.
Finally, note that in accordance with the definition,   relation  \eqref{52.2} implies   that the  operator $\tilde{H}$ is m-accretive.
  Since we  already know  that the operators $\tilde{L},\tilde{L}^{+},\tilde{H}$ are sectorial and m-accretive, then  in accordance with the definition they are m-sectorial.
 \end{proof}

\section{Main theorems}

In this section we need using the theory of sesquilinear forms. If it is not stated otherwise, we use the definitions and  the  notation  of the monograph \cite {firstab_lit:kato1966}.
Consider the forms
\begin{equation}\label{53}
t[u,v] = \int\limits_{\Omega} a^{ij}D_{i}u\, \overline{D_{j}v}dQ+ \int\limits_{\Omega}  \rho\,\mathfrak{D}^{\alpha }u \, \bar{v } dQ, \; u,v\in H^{1}_{0}(\Omega) ,
\end{equation}
  \begin{equation*}
t^{*}[u,v]: =\overline{t [v,u]} =\int\limits_{\Omega} a^{ij}D_{j}u\, \overline{D_{i}v}dQ+ \int\limits_{\Omega}   u  \rho\,  \overline{\mathfrak{D}^{\alpha }v }dQ,
\end{equation*}
$$
\mathfrak{Re} t :=\frac{1}{2}(t+t^{*}).
$$
For convenience, we use the shorthand notation $h:=\mathfrak{Re} t.$

 \begin{lemma}\label{L4}
The form $t$ is a closed sectorial form, moreover  $t=\mathfrak{\tilde{f}}, $ where
 \begin{equation*}
 \mathfrak{ f }[u,v]=(\tilde{L}u,v)_{L_{2}(\Omega)},\;u,v\in \mathrm{D}(\tilde{L}).
 \end{equation*}
\end{lemma}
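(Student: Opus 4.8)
The proof has three parts: (i) $t$ is sectorial; (ii) $t$ is closed; (iii) $t$ is precisely the closure $\mathfrak{\tilde f}$ of $\mathfrak f$. Throughout note that $t$ is densely defined, since $\mathrm{D}(t)=H^1_0(\Omega)\supset C_0^\infty(\Omega)$ is dense in $L_2(\Omega)$, and that $t$ is bounded relative to the $H^1_0(\Omega)$-norm: combining $|\int_\Omega a^{ij}D_iu\,\overline{D_jv}\,dQ|\leq a_1\|u\|_{L^1_2(\Omega)}\|v\|_{L^1_2(\Omega)}$ with the estimate $\|\mathfrak{D}^\alpha u\|_{L_2(\Omega)}\leq K\delta^{-\nu}\|u\|_{L_2(\Omega)}+\delta^{1-\nu}\|u\|_{L^1_2(\Omega)}$ from \eqref{3} gives $|t[u,v]|\leq C\|u\|_{H^1_0(\Omega)}\|v\|_{H^1_0(\Omega)}$; in particular $t$ is continuous in each argument for the $H^1_0(\Omega)$-norm.

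For (i) and (ii) the point is to transfer to $t$ (now living on all of $H^1_0(\Omega)$) the estimates already obtained for $\tilde L$. By the Green formula \eqref{35} and \eqref{33}, $\mathrm{Re}\int_\Omega a^{ij}D_iu\,\overline{D_iu}\,dQ\geq a_0\|u\|^2_{L^1_2(\Omega)}$; together with Theorem \ref{T5} (valid for $u\in H^1_0(\Omega)$) and the Friedrichs inequality this yields $\mathrm{Re}\,t[u]\geq a_0\|u\|^2_{L^1_2(\Omega)}+\mu\|u\|^2_{L_2(\Omega,\rho)}\geq \mu_1\|u\|^2_{L_2(\Omega)}$ with $\mu_1=a_0+\mu\inf\rho>0$. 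For the imaginary part, the computation \eqref{44}--\eqref{48} uses only \eqref{3} and the Young inequality, hence carries over verbatim to $u\in H^1_0(\Omega)$, giving $|\mathrm{Im}\,t[u]|\leq(\varepsilon\delta^{-2\nu}C_2+\varepsilon^{-1})\|u\|^2_{L_2(\Omega)}+(\varepsilon\delta^{2-2\nu}C_3+a_1)\|u\|^2_{L^1_2(\Omega)}$; choosing the parameters exactly as in the proof of Theorem \ref{T6} produces $|\mathrm{Im}(u,(t-\gamma)u)_{L_2(\Omega)}|\leq k^{-1}\mathrm{Re}(u,(t-\gamma)u)_{L_2(\Omega)}$, so $\Theta(t)\subset\mathfrak S$ with vertex $\gamma$ and semi-angle $\theta=\arctan(1/k)$, i.e. $t$ is sectorial. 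For closedness, a sectorial form is closed iff its real part $h=\mathfrak{Re}\,t$ is closed, and the latter means that $(\mathrm D(h),\|\cdot\|_h)$ is complete, where $\|u\|_h^2:=h[u]+(1-\gamma)\|u\|^2_{L_2(\Omega)}$. The lower bound above and the boundedness of $t$ show that $\|\cdot\|_h$ is equivalent to the norm of $H^1_0(\Omega)$ on $\mathrm D(t)=H^1_0(\Omega)$; since $H^1_0(\Omega)$ is complete, $h$, hence $t$, is closed.

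For (iii): by Theorem \ref{T7} the operator $\tilde L$ is m-sectorial, so $\mathfrak f$ is closable and $\mathfrak{\tilde f}$ is the unique closed sectorial form associated with $\tilde L$ (first representation theorem, \cite{firstab_lit:kato1966}). First I would verify $\mathfrak f\subset t$. By the proof of Theorem \ref{T6}, every $u\in\mathrm D(\tilde L)$ belongs to $H^1_0(\Omega)$ and is the $H^1_0(\Omega)$-limit of a sequence $u_n\in\mathrm D(L)$ with $Lu_n\to\tilde Lu$ in $L_2(\Omega)$ (the $H^1_0$-convergence follows from \eqref{40} together with Theorem \ref{T5}). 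For such $u_n$ the Green formula \eqref{35} gives $(Lu_n,v)_{L_2(\Omega)}=t[u_n,v]$ for all $v\in H^1_0(\Omega)$; passing to the limit (left side by $L_2$-convergence, right side by continuity of $t$ in its first argument) yields $(\tilde Lu,v)_{L_2(\Omega)}=t[u,v]$, i.e. $\mathfrak f[u,v]=t[u,v]$. Thus $t$ is a closed extension of $\mathfrak f$, whence $\mathfrak{\tilde f}\subset t$. For the reverse inclusion, I would show that $C_0^\infty(\Omega)$ is a core of $t$: it is dense in $H^1_0(\Omega)=\mathrm D(t)$, hence dense in $\mathrm D(t)$ for $\|\cdot\|_h$ by the norm equivalence of (ii); and on $C_0^\infty(\Omega)$ the Green formula gives $t[u,v]=(Lu,v)_{L_2(\Omega)}=\mathfrak f[u,v]$. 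Therefore the closure of $\mathfrak f$ contains the closure of its restriction to $C_0^\infty(\Omega)$, which equals $t$; that is, $t\subset\mathfrak{\tilde f}$. Combining the two inclusions gives $t=\mathfrak{\tilde f}$.

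\textbf{Main obstacle.} The estimates in (i)--(ii) are essentially inherited from Theorems \ref{T5} and \ref{T6}; the delicate point is the two-sided identification $t=\mathfrak{\tilde f}$, namely establishing that $C_0^\infty(\Omega)$ is a core of $t$ and hence that the closure of $\mathfrak f$ is not merely contained in, but equal to, $t$. This step relies decisively on the form-norm equivalence proved in (ii), which is why that part must be carried out carefully.
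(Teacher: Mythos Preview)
Your proposal is correct and follows essentially the same approach as the paper: both establish the two-sided equivalence of the form norm with the $H^1_0(\Omega)$ norm (from the lower bound via \eqref{37}/Theorem~\ref{T5} and the upper bound via \eqref{3}), deduce sectoriality from the computations in Theorem~\ref{T6}, conclude closedness from completeness of $H^1_0(\Omega)$, and identify $\mathfrak f$ with $t$ on $\mathrm D(\tilde L)$ by passing to the limit in the Green formula \eqref{35}. The only cosmetic difference is that you make the two inclusions $\tilde{\mathfrak f}\subset t$ and $t\subset\tilde{\mathfrak f}$ explicit via a core argument with $C_0^\infty(\Omega)$, whereas the paper simply invokes the form-norm equivalence together with Kato's Theorem~1.17 to pass to the limit in $\mathfrak f[u,v]=t[u,v]$; the content is the same.
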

 \begin{proof}
First we shall show  that the following    inequality   holds
\begin{equation}\label{54}
C_{0}\|f\|^{2} _{H^{1}_{0}(\Omega)}\leq  \left|t[f ]\right|\leq C_{1}\|f\|^{2} _{H^{1}_{0}(\Omega)},\,f\in H^{1}_{0}(\Omega).
\end{equation}
Using \eqref{41}, Theorem \ref{T5},   we obtain
\begin{equation}\label{55}
C_{0}\|f\|^{2} _{H^{1}_{0}(\Omega)}\leq   {\rm Re} t[f ] \leq\left|t[f ]\right|,\;f\in H^{1}_{0} (\Omega).
\end{equation}
Applying \eqref{45},\eqref{46}, we get
\begin{equation}\label{56}
|t[f]|\leq\left|\left(a^{ij}D_if,D_jf\right)_{\!L_{2}(\Omega)}\!\right|+\left|\left(\rho \,\mathfrak{D}^{\alpha} f, f\right)_{\!L_{2}(\Omega)}\!\right| \leq C_{1}\|f\|^{2}_{H^{1}_{0}(\Omega)},\,f\in H^{1}_{0} (\Omega).
\end{equation}
Note  that $ H^{1}_{0}(\Omega) \subset \mathrm{D}( \tilde{t}) .$  If $f\in \mathrm{D}( \tilde{t} ),$ then in accordance with the definition,  there exists  a sequence
$
\{f_{n}\}\subset \mathrm{D}( t),\, f_{n}\xrightarrow[t ]{ }f.
$
Applying   \eqref{54}, we get  $ f_{n}\xrightarrow[  ]{ H^{1}_{0}}f .$
Since the space $H^{1}_{0}(\Omega)$ is complete, then   $\mathrm{D}( \tilde{t}) \subset H^{1}_{0}(\Omega).$ It implies that $\mathrm{D}( \tilde{t})=\mathrm{D}( t).$ Hence   $t$ is a closed form.
The proof of the sectorial property contains in the proof of Theorem \ref{T6}. Let us prove that $t=\mathfrak{\tilde{f}}.$  First, we shall    show that
\begin{equation}\label{57}
\mathfrak{ \mathfrak{f} }[u,v]=t[u,v],\;u,v\in \mathrm{D}(\mathfrak{f}).
\end{equation}
 Using   formula \eqref{35}, we have
\begin{equation}\label{58}
( L u ,v )_{L_{2}(\Omega)}=t[u ,v ],\;u,v\in \mathrm{D}(L).
 \end{equation}
 Hence  we can rewrite    relation \eqref{54} in the following form
\begin{equation}\label{59}
C_{0}\|f\|^{2} _{H^{1}_{0}(\Omega)}\leq  \left|( L f,f)_{L_{2}(\Omega)}\right|\leq C_{1}\|f\|^{2} _{H^{1}_{0}(\Omega)},\;f\in \mathrm{D}(L).
\end{equation}
 Assume that $f \in \mathrm{D}(\tilde{L}),$ then there exists a sequence $\{f_{n}\}\in \mathrm{D}( L ),\,f_{n}\xrightarrow[L]{}f.$   Combining   \eqref{59},\eqref{54}, we obtain
   $f_{n}\xrightarrow[t]{}f.$   These facts give us an opportunity to pass to the limit on the left and right side of \eqref{58}. Thus, we obtain \eqref{57}. Combining  \eqref{57},\eqref{54}, we get
\begin{equation}\label{60}
C_{0}\|f\|^{2} _{H^{1}_{0}(\Omega)}\leq  \left|\mathfrak{ f }[f ]\right|\leq C_{1}\|f\|^{2} _{H^{1}_{0}(\Omega)},\;f\in \mathrm{D}(\mathfrak{f}).
\end{equation}
Note that by virtue  of Theorem \ref{T6} the operator $\tilde{L}$ is sectorial, hence due to Theorem 1.27 \cite[p.399] {firstab_lit:kato1966} the form $\mathfrak{f}$ is closable. Using the facts established above, Theorem 1.17 \cite[p.395] {firstab_lit:kato1966},   passing to the limit  on the left and right side of inequality \eqref{57}, we get
\begin{equation*}
\mathfrak{ \mathfrak{\tilde{f}} }[u,v]=t[u,v],\;u,v\in H_{0}^{1}(\Omega).
\end{equation*}
This concludes the proof.
 \end{proof}

\begin{lemma}\label{L5}
The form h is a closed symmetric sectorial form, moreover  $h=\mathfrak{\tilde{k}}, $ where
 \begin{equation*}
 \mathfrak{ k }[u,v]=(\tilde{H}u,v)_{L_{2}(\Omega)},\;u,v\in \mathrm{D}(\tilde{H}).
 \end{equation*}
\end{lemma}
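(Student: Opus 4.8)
The plan is to repeat, with the obvious modifications, the argument of Lemma~\ref{L4}, using that $h=\mathfrak{Re}\,t$ and that by Lemma~\ref{L4} the form $t$ is already closed and sectorial. First I would record the two-sided estimate
\begin{equation*}
C_{0}\|f\|^{2}_{H^{1}_{0}(\Omega)}\leq|h[f]|\leq C_{1}\|f\|^{2}_{H^{1}_{0}(\Omega)},\quad f\in H^{1}_{0}(\Omega),
\end{equation*}
which is immediate once one observes that $h[f,f]=\mathfrak{Re}\,t[f,f]$, so $|h[f]|=\mathfrak{Re}\,t[f]$ is squeezed between $C_{0}\|f\|^{2}_{H^{1}_{0}(\Omega)}$ by \eqref{55} and $|t[f]|\leq C_{1}\|f\|^{2}_{H^{1}_{0}(\Omega)}$ by \eqref{56}. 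Arguing exactly as in Lemma~\ref{L4}, this gives $\mathrm{D}(\tilde{h})=\mathrm{D}(h)=H^{1}_{0}(\Omega)$, so $h$ is closed and its form norm is equivalent to the $H^{1}_{0}(\Omega)$-norm; symmetry follows from $h^{*}=\frac{1}{2}(t^{*}+t)=h$; and combining \eqref{55} with the Friedrichs inequality (or directly with \eqref{52.0}) gives $h[f]\geq\mu_{1}\|f\|^{2}_{L_{2}(\Omega)}>0$, so that $h$ is symmetric sectorial with vertex $\mu_{1}$ and semi-angle $0$.

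It remains to identify $h$ with $\tilde{\mathfrak{k}}$. I would first verify that
\begin{equation*}
\mathfrak{k}[u,v]=h[u,v],\quad u,v\in\mathrm{D}(H).
\end{equation*}
By \eqref{58} we have $(Lu,v)_{L_{2}(\Omega)}=t[u,v]$, and the same computation applied to $L^{+}$ --- that is, the Green formula \eqref{35} for the second-order part together with Lemma~\ref{L2}, which gives $\int_{\Omega}(\mathfrak{D}^{\alpha}_{d-}\rho u)\,\bar{v}\,dQ=\int_{\Omega}u\,\rho\,\overline{\mathfrak{D}^{\alpha}v}\,dQ$ (the passage from $\mathfrak{D}^{\alpha}_{0+}$ to $\mathfrak{D}^{\alpha}$ on $H^{1}_{0}(\Omega)$ being supplied by Lemma~\ref{L1} and Theorem~\ref{T4}) --- yields $(L^{+}u,v)_{L_{2}(\Omega)}=t^{*}[u,v]$; adding and dividing by $2$ gives $(\tilde{H}u,v)_{L_{2}(\Omega)}=h[u,v]$. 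Now, for $f\in\mathrm{D}(\tilde{H})$ choose $\{f_{n}\}\subset\mathrm{D}(H)$ with $f_{n}\to f$ and $Hf_{n}\to\tilde{H}f$ in $L_{2}(\Omega)$; by \eqref{52.0} and the $H^{1}_{0}$-bound, exactly as in Lemma~\ref{L4}, one has $f_{n}\to f$ in $H^{1}_{0}(\Omega)$, and passing to the limit (using $(\tilde{H}f_{n},g_{n})\to(\tilde{H}f,g)$ and, by the two-sided estimate, $h[f_{n},g_{n}]\to h[f,g]$) extends the identity to all $u,v\in\mathrm{D}(\tilde{H})$. Thus $\mathfrak{k}$ is the restriction of the closed form $h$ to $\mathrm{D}(\tilde{H})$; since $\tilde{H}$ is selfadjoint and bounded below by Theorem~\ref{T7}, $\mathfrak{k}$ is closable, and since $\mathrm{D}(\tilde{H})\supset\mathrm{D}(H)\supset C_{0}^{\infty}(\Omega)$ is dense in $H^{1}_{0}(\Omega)$ it is a core for $h$; hence $\tilde{\mathfrak{k}}=h$.

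The only genuinely new point --- and hence the main obstacle --- is the fractional integration-by-parts identity $\int_{\Omega}(\mathfrak{D}^{\alpha}_{d-}\rho u)\,\bar{v}\,dQ=\int_{\Omega}u\,\rho\,\overline{\mathfrak{D}^{\alpha}v}\,dQ$ underlying $(L^{+}u,v)_{L_{2}(\Omega)}=t^{*}[u,v]$: one must check that $\rho u$ and $v$ lie in the classes $\mathfrak{I}^{\alpha}_{d-}(L_{2})$ and $\mathfrak{I}^{\alpha}_{0+}(L_{2})$ on which Lemma~\ref{L2} is available (which is exactly where the hypothesis $\rho\in{\rm Lip}\,\lambda$, $\lambda>\alpha$, together with Theorem~\ref{T4} and Lemma~\ref{L1}, enters), and that the two renderings $\mathfrak{D}^{\alpha}$ and $\mathfrak{D}^{\alpha}_{0+}$ of the fractional derivative genuinely coincide on $H^{1}_{0}(\Omega)$. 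Everything else is a routine transcription of the proof of Lemma~\ref{L4}.
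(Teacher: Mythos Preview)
Your proposal is correct and follows essentially the same route as the paper's proof: the two-sided estimate $C_{0}\|f\|^{2}_{H^{1}_{0}}\leq h[f]\leq C_{1}\|f\|^{2}_{H^{1}_{0}}$ from \eqref{55}--\eqref{56}, closedness via $\mathrm{D}(\tilde{h})=H^{1}_{0}(\Omega)$, the identity $(Hu,v)=h[u,v]$ on $\mathrm{D}(H)$ obtained from Lemmas~\ref{L1}--\ref{L2} and the Green formula \eqref{35}, extension to $\mathrm{D}(\tilde{H})$ by approximation, and finally closability of $\mathfrak{k}$ plus density to conclude $\tilde{\mathfrak{k}}=h$. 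Your explicit invocation of Theorem~\ref{T4} to place $\rho u,\,v$ in the right representability classes before applying Lemma~\ref{L2} is in fact a shade more careful than the paper, which cites only Lemmas~\ref{L1} and~\ref{L2} at that step.
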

\begin{proof}
To prove  the  symmetric  property   (see(1.5) \cite[p.387] {firstab_lit:kato1966}) of the form $h,$  it is  sufficient to note that
$$
h[u,v]=\frac{1}{2}\left(t[u,v]+ \overline{t[v,u]}  \right)=\frac{1}{2}\overline{\left( t[v,u] +\overline{t[u,v]}\right)}=\overline{h[v,u]},\;u,v\in \mathrm{D}(h).
$$
Obviously, we have
$
h[f]= {\rm Re}\,t[f].
$
Hence applying    \eqref{55},   \eqref{56},   we have
\begin{equation}\label{61}
C_{0}\|f\| _{H^{1}_{0}(\Omega)}\leq  h[f ] \leq C_{1}\|f\| _{H^{1}_{0}(\Omega)},\;f\in H^{1}_{0}(\Omega).
\end{equation}
 Arguing as above, using \eqref{61}, it is easy to  prove   that $\mathrm{D}(\tilde{h})=H_{0}^{1}(\Omega). $ Hence the form $h$ is a closed form. The  proof of the sectorial property of the form $h$ contains in the proof of Theorem \ref{T6}.
Let us  prove that $h=\mathfrak{\tilde{k}}.$ We shall  show that
\begin{equation}\label{62}
\mathfrak{ \mathfrak{k} }[u,v]=h[u,v],\,u,v\in \mathrm{D}(\mathfrak{k}).
\end{equation}
Applying \ref{L1}, Lemma \ref{L2}, we have
$$
(\rho\,\mathfrak{D}^{\alpha}f,g)_{L_{2}(\Omega)}=(f,\mathfrak{D}_{d-}^{\alpha}\rho g)_{L_{2}(\Omega)},\,f,g\in H_{0}^{1}(\Omega).
$$
Combining this fact with formula \eqref{35}, it is not hard to prove that
\begin{equation}\label{63}
( H u,v)_{L_{2}(\Omega)}=h[u,v],\;u,v\in \mathrm{D}(H).
\end{equation}
Using \eqref{63}, we can rewrite  estimate \eqref{61}  as  follows
\begin{equation}\label{64}
C_{0}\|f\| _{H^{1}_{0}(\Omega)}\leq    ( H f,f)_{L_{2}(\Omega)} \leq C_{1}\|f\| _{H^{1}_{0}(\Omega)},\;f\in \mathrm{D}(H).
\end{equation}
Note that in consequence of  Remark  \ref{L3} the  operator $H$ is closeable. Assume that  $f \in \mathrm{D}(\tilde{H}),$    then there  exists a sequence  $\{f_{n}\}\subset \mathrm{D}(H),\,f_{n}\xrightarrow[H]{}f .$   Combining  \eqref{64},\eqref{61}, we obtain
 $f_{n}\xrightarrow[h]{}f.$    Passing to the limit on the left and right side of \eqref{63}, we get \eqref{62}.
 Combining  \eqref{62},\eqref{61}, we obtain
\begin{equation}\label{65}
C_{0}\|f\| _{H^{1}_{0}(\Omega)}\leq   \mathfrak{ k }[f ] \leq C_{1}\|f\| _{H^{1}_{0}(\Omega)},\;f\in \mathrm{D}(\mathfrak{k}).
\end{equation}
 Note that in consequence of Theorem \ref{T6} the operator $\tilde{H}$ is sectorial. Hence by virtue of    Theorem 1.27 \cite[p.399] {firstab_lit:kato1966} the form $\mathfrak{k}$ is closable.
Using the proven  above facts, Theorem 1.17 \cite[p.395] {firstab_lit:kato1966},  passing to the limits on the left and right side of inequality \eqref{62}, we get
\begin{equation*}
 \mathfrak{ \mathfrak{\tilde{k}} }[u,v]  = h[u,v],\,u,v\in H_{0}^{1}(\Omega).
\end{equation*}
This completes the proof.
\end{proof}

\begin{theorem}\label{T8}
The operator $\tilde{H}$ has a compact resolvent,    the following estimate   holds
\begin{equation}\label{66}
\lambda_{n}(L_{0})\leq\lambda_{n}(\tilde{H})\leq\lambda_{n}(L_{1}),\,n\in\mathbb{N},
\end{equation}
where $\lambda_{n}(L_{k}),\,k=0,1$ are respectively  the eigenvalues of the following   operators with real  constant coefficients 
\begin{equation}\label{67}
L_{k}f=-  a_{ k }^{ij} D _{j}D_{i}f +\rho_{k}f,\,\mathrm{D}(L_{k})=\mathrm{D}(L),
$$
$$
a_{ k }^{ij}\xi_{i}\xi_{j}>0,\,\rho_{k}>0.
\end{equation}
\end{theorem}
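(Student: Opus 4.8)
The plan is to reduce both assertions to the form-theoretic picture provided by Lemma~\ref{L5}, which identifies $\tilde{H}$ as the self-adjoint operator associated with the closed symmetric form $h$, whose form domain is $H^{1}_{0}(\Omega)$ and which, by \eqref{61}, is equivalent to $\|\cdot\|^{2}_{H^{1}_{0}(\Omega)}$. First I would settle the compact resolvent: since $\Omega$ is bounded, the embedding $H^{1}_{0}(\Omega)\hookrightarrow L_{2}(\Omega)$ is compact, and a lower-semibounded closed symmetric form whose form domain is continuously and compactly embedded in $L_{2}(\Omega)$ generates a self-adjoint operator with compact resolvent (the standard compactness criterion, see \cite{firstab_lit:kato1966}, together with \eqref{61}). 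In particular the spectrum of $\tilde{H}$ is discrete, consisting of an increasing sequence $\lambda_{1}(\tilde{H})\leq\lambda_{2}(\tilde{H})\leq\dots\to\infty$ of eigenvalues of finite multiplicity. The same reasoning, using the Green formula \eqref{35}, applies to each $L_{k}$: its form $h_{k}[f]=\int_{\Omega} a^{ij}_{k}D_{i}f\,\overline{D_{j}f}\,dQ+\rho_{k}\|f\|^{2}_{L_{2}(\Omega)}$ has form domain $H^{1}_{0}(\Omega)$, is equivalent to $\|\cdot\|^{2}_{H^{1}_{0}(\Omega)}$, hence $L_{k}$ is self-adjoint, bounded below, has compact resolvent, and $\lambda_{n}(L_{k})$ is well defined.

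For the two-sided estimate I would invoke the variational (min--max) characterization: for a lower-semibounded self-adjoint operator with compact resolvent and associated form $a$,
$$
\lambda_{n}=\min_{\substack{M\subset\mathrm{D}(a)\\ \dim M=n}}\;\max_{0\neq f\in M}\frac{a[f]}{\|f\|^{2}_{L_{2}(\Omega)}}.
$$
Because the form domains of $h$, $h_{0}$ and $h_{1}$ all coincide with $H^{1}_{0}(\Omega)$, monotonicity of the eigenvalues with respect to the form reduces \eqref{66} to producing constant-coefficient data $(a^{ij}_{0},\rho_{0})$, $(a^{ij}_{1},\rho_{1})$ as in \eqref{67} for which
$$
h_{0}[f]\leq h[f]\leq h_{1}[f],\qquad f\in H^{1}_{0}(\Omega).
$$
Here one uses that $h[f]={\rm Re}\,t[f]={\rm Re}\!\int_{\Omega} a^{ij}D_{i}f\,\overline{D_{j}f}\,dQ+{\rm Re}(f,\mathfrak{D}^{\alpha}f)_{L_{2}(\Omega,\rho)}$, the first term being equal to $\int_{\Omega} a^{ij}_{\rm sym}D_{i}f\,\overline{D_{j}f}\,dQ$ with $a^{ij}_{\rm sym}=\frac{1}{2}(a^{ij}+a^{ji})$.

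The principal part is handled at once: \eqref{33} and $a^{ij}\in C^{1}(\bar{\Omega})$ give $a_{0}|\xi|^{2}\leq a^{ij}_{\rm sym}(Q)\xi_{i}\bar{\xi}_{j}\leq a_{2}|\xi|^{2}$, where $a_{2}=\sup_{\bar{\Omega},\,|\xi|=1}a^{ij}\xi_{i}\xi_{j}<\infty$, whence $a_{0}\|f\|^{2}_{L^{1}_{2}(\Omega)}\leq{\rm Re}\!\int_{\Omega} a^{ij}D_{i}f\,\overline{D_{j}f}\,dQ\leq a_{2}\|f\|^{2}_{L^{1}_{2}(\Omega)}$. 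For the zero-order term the lower bound is Theorem~\ref{T5}, namely ${\rm Re}(f,\mathfrak{D}^{\alpha}f)_{L_{2}(\Omega,\rho)}\geq\mu\|f\|^{2}_{L_{2}(\Omega,\rho)}\geq\mu\,(\inf_{\bar{\Omega}}\rho)\,\|f\|^{2}_{L_{2}(\Omega)}$, a genuinely positive lower bound since $\mu>0$ by assumption and $\inf_{\bar{\Omega}}\rho>0$. For the upper bound I would repeat the estimate behind \eqref{46}: inequality \eqref{3} and the Young inequality yield, for every $\varepsilon'>0$ and a suitable choice of the parameters $\varepsilon,\delta$, that $\left|(f,\mathfrak{D}^{\alpha}f)_{L_{2}(\Omega,\rho)}\right|\leq C(\varepsilon')\|f\|^{2}_{L_{2}(\Omega)}+\varepsilon'\|f\|^{2}_{L^{1}_{2}(\Omega)}$. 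Collecting these inequalities, one may take $a^{ij}_{0}=a_{0}\delta^{ij}$, $\rho_{0}=\mu\inf_{\bar{\Omega}}\rho$, $a^{ij}_{1}=(a_{2}+\varepsilon')\delta^{ij}$, $\rho_{1}=C(\varepsilon')$; all of them meet the positivity requirements of \eqref{67}, and the required sandwich, hence \eqref{66}, follows.

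The main obstacle is precisely the upper comparison $h[f]\leq h_{1}[f]$: unlike the lower estimate, the fractional term $(f,\mathfrak{D}^{\alpha}f)_{L_{2}(\Omega,\rho)}$ cannot be dominated by $\|f\|^{2}_{L_{2}(\Omega)}$ alone, so one is compelled to spend a small portion of the gradient energy via \eqref{3} and Young's inequality. This is harmless: the surplus $\varepsilon'\|f\|^{2}_{L^{1}_{2}(\Omega)}$ merely enlarges the still strictly positive leading coefficient of $L_{1}$, while the remaining term is absorbed into the zero-order coefficient $\rho_{1}>0$. Everything else --- the Rellich compactness, the norm equivalence \eqref{61}, and the form version of the min--max principle --- is routine once Lemma~\ref{L5} is available.
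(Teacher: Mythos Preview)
Your proof is correct and follows essentially the same route as the paper: both arguments identify $\tilde H$ with the closed symmetric form $h$ on $H^{1}_{0}(\Omega)$ via Lemma~\ref{L5}, use the Rellich--Kondrashov compact embedding together with the norm equivalence \eqref{61} to obtain the compact resolvent, and then derive \eqref{66} from monotonicity of eigenvalues under form comparison. The only cosmetic difference is that the paper phrases the comparison in Mihlin's language of energetic spaces and the ordering $L_{0}\leq\tilde H\leq L_{1}$ (citing \cite{firstab_lit:mihlin1970}), merely asserting the existence of suitable $L_{k}$ via \eqref{70}, whereas you invoke the min--max principle directly and give an explicit construction $a^{ij}_{0}=a_{0}\delta^{ij}$, $\rho_{0}=\mu\inf\rho$, $a^{ij}_{1}=(a_{2}+\varepsilon')\delta^{ij}$, $\rho_{1}=C(\varepsilon')$; the content is the same.
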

\begin{proof}
  First, we shall prove  the following      propositions \\
i) {\it The  operators $\tilde{H},L_{k}$ are positive-definite.} Using the fact that   the operator $H$ is selfadjoint, relation \eqref{52.0}, we conclude that
the  operator  $\tilde{H}$ is positive-definite. Using the definition, we can easily prove    that the operators $L_{k}$ are   positive-definite.   \\
ii) {\it The space $H^{1}_{0}(\Omega)$ coincides  with the energetic spaces  $\mathfrak{H}_{\tilde{H}},\mathfrak{H}_{L_{k}}$  as a set of   elements.} Using   Lemma \ref{L5}, we have
  \begin{equation}\label{68}
 \|f\| _{\mathfrak{H}_{\tilde{H}}}=\tilde{\mathfrak{k}}[f]=h[f],\;f\in H_{0}^{1}(\Omega).
\end{equation}
Hence the space  $\mathfrak{H}_{\tilde{H}}$ coincides with $H^{1}_{0}(\Omega)$ as a set of   elements. Using this fact,
we obtain   the coincidence of the spaces $H^{1}_{0}(\Omega)$ and $\mathfrak{H}_{L_{k}}$ as the particular case. \\
iii){\it We have the following estimates
\begin{equation}\label{69}
 \|f\| _{\mathfrak{H} _{L_{0}}}\leq \|f\| _{\mathfrak{H}_{\tilde{H}}}\leq  \|f\| _{\mathfrak{H} _{L_{1}}},\,f\in H^{1}_{0}(\Omega).
\end{equation}
}
We obtain the equivalence  of the   norms $\|\cdot\|_{H_{0}^{1}}$ and $\|\cdot\|_{\mathfrak{H} _{L_{k}}}$ as the particular case of relation \eqref{54}.
It is obvious that there   exist  such   operators  $L_{k}$    that the next  inequalities hold
\begin{equation}\label{70}
\|f\| _{\mathfrak{H} _{L_{0}}}\leq C_{0}\|f\| _{H^{1}_{0}(\Omega)},\;C_{1}\|f\| _{H^{1}_{0}(\Omega)}\leq\|f\| _{\mathfrak{H} _{L_{1}}},\,f\in H^{1}_{0}(\Omega).
\end{equation}
Combining \eqref{61},\eqref{68},\eqref{70}, we get   \eqref{69}.

 Now we can prove the   proposal  of this theorem.  Note that   the operators $\tilde{H},$ $L_{k}$  are positive-definite,    the   norms 
 $\|\cdot\|_{H_{0}^{1}}, \|\cdot\|_{\mathfrak{H} _{L_{k}}}, \|\cdot\|_{\mathfrak{H}_{\tilde{H}}}$     are equivalent.   Applying  the  Rellich-Kondrashov  theorem, we have that the energetic  spaces
  $\mathfrak{H}_{\tilde{H}},\;\mathfrak{H} _{L_{k}}$  are compactly embedded into $L_{2}(\Omega).$ Using Theorem 3 \cite[p.216]{firstab_lit:mihlin1970}, we obtain  the fact  that  the operators $ L_{0} ,L_{1},\tilde{H}$ have  a discrete   spectrum.
 Taking into account   (i),(ii),(iii), in accordance with the definition \cite[p.225]{firstab_lit:mihlin1970}, we have
$$
L_{0}\leq \tilde{H} \leq L_{1}.
$$
 Applying     Theorem 1 \cite[p.225]{firstab_lit:mihlin1970},   we obtain  \eqref{66}.
 Note that by virtue of Theorem \ref{T7} the operator $\tilde{H}$ is m-accretive. Hence $0\in P(\tilde{H}).$ Due to    Theorem 5 \cite[p.222]{firstab_lit:mihlin1970} the operator $\tilde{H}$ has a compact resolvent at the point zero.
 Applying    Theorem 6.29 \cite[p.237]{firstab_lit:kato1966}, we conclude that the  operator $\tilde{H}$ has a compact resolvent.

\end{proof}

\begin{theorem}
Operator $\tilde{L}$ has a compact resolvent,   discrete spectrum.
\end{theorem}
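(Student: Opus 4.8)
The plan is to obtain the compactness of the resolvent by showing that, at a suitable point of the resolvent set, the resolvent of $\tilde{L}$ maps $L_{2}(\Omega)$ boundedly into $H^{1}_{0}(\Omega)$, and then to invoke the compact embedding $H^{1}_{0}(\Omega)\hookrightarrow L_{2}(\Omega)$. All the analytic ingredients are already at hand: Theorem \ref{T7} gives that $\tilde{L}$ is m-sectorial, hence m-accretive; inequality \eqref{42} together with the Friedrichs inequality provides a constant $C_{0}>0$ such that ${\rm Re}(\tilde{L}f,f)_{L_{2}(\Omega)}\geq C_{0}\|f\|^{2}_{H^{1}_{0}(\Omega)}$ for $f\in\mathrm{D}(\tilde{L})$; and the Rellich--Kondrashov theorem supplies the compact embedding.

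First I would fix any $\zeta_{0}\in\mathbb{C}$ with ${\rm Re}\,\zeta_{0}<0$; such a point belongs to $P(\tilde{L})$ since $\tilde{L}$ is m-accretive. For $g\in L_{2}(\Omega)$ set $u=(\tilde{L}-\zeta_{0})^{-1}g$, so that $u\in\mathrm{D}(\tilde{L})\subset H^{1}_{0}(\Omega)$ and $\tilde{L}u=g+\zeta_{0}u$. Then
\begin{equation*}
C_{0}\|u\|^{2}_{H^{1}_{0}(\Omega)}\leq{\rm Re}(\tilde{L}u,u)_{L_{2}(\Omega)}={\rm Re}(g,u)_{L_{2}(\Omega)}+{\rm Re}\,\zeta_{0}\,\|u\|^{2}_{L_{2}(\Omega)}\leq\|g\|_{L_{2}(\Omega)}\,\|u\|_{L_{2}(\Omega)},
\end{equation*}
and since $\|u\|_{L_{2}(\Omega)}\leq C\,\|u\|_{H^{1}_{0}(\Omega)}$ we conclude $\|u\|_{H^{1}_{0}(\Omega)}\leq C'\,\|g\|_{L_{2}(\Omega)}$. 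Thus $(\tilde{L}-\zeta_{0})^{-1}$ is bounded as an operator from $L_{2}(\Omega)$ into $H^{1}_{0}(\Omega)$, and composing with the compact embedding $H^{1}_{0}(\Omega)\hookrightarrow L_{2}(\Omega)$ we see that $(\tilde{L}-\zeta_{0})^{-1}$ is a compact operator on $L_{2}(\Omega)$.

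To finish, I would argue exactly as in the proof of Theorem \ref{T8}: by Theorem 6.29 \cite[p.237]{firstab_lit:kato1966} the compactness of the resolvent at the single point $\zeta_{0}$ propagates to every point of $P(\tilde{L})$, so $\tilde{L}$ has a compact resolvent; consequently the spectrum of $\tilde{L}$ is discrete, consisting of isolated eigenvalues of finite algebraic multiplicity with no finite accumulation point. I expect no substantial obstacle here: the only point demanding a moment's care is the choice of $\zeta_{0}$ inside $P(\tilde{L})$, which is immediate from the m-accretivity and strict accretivity established in Theorems \ref{T6}--\ref{T7}; the rest is a routine assembly of estimates already proved.
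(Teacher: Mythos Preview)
Your argument is correct and complete: the coercivity estimate ${\rm Re}(\tilde{L}f,f)_{L_{2}(\Omega)}\geq C_{0}\|f\|^{2}_{H^{1}_{0}(\Omega)}$ on $\mathrm{D}(\tilde{L})$ (which indeed follows from \eqref{42}, since $\mathrm{D}(\tilde{L})\subset H^{1}_{0}(\Omega)$ was established in the proof of Theorem~\ref{T6}) together with the Rellich--Kondrashov embedding yields compactness of the resolvent at one point, and Theorem~6.29 of \cite{firstab_lit:kato1966} then gives the full conclusion.

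The paper, however, takes a genuinely different and more structural route. Rather than estimating the resolvent of $\tilde{L}$ directly, it first identifies $\tilde{L}$ and $\tilde{H}$ with the operators $T_{t}$ and $T_{h}$ associated via the first representation theorem to the closed sectorial forms $t$ and $h=\mathfrak{Re}\,t$ (Lemmas~\ref{L4} and~\ref{L5}, together with Theorem~2.9 of \cite[p.~409]{firstab_lit:kato1966}); this makes $\tilde{H}$ precisely the \emph{real part} of $\tilde{L}$ in the sense of \cite[p.~424]{firstab_lit:kato1966}. The compactness of the resolvent of $\tilde{H}$, already established in Theorem~\ref{T8}, is then transferred to $\tilde{L}$ by Kato's Theorem~3.3 \cite[p.~424]{firstab_lit:kato1966}, which asserts that an m-sectorial operator has compact resolvent if and only if its real part does. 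Your approach is more elementary and self-contained, avoiding the form-theoretic machinery and the detour through $\tilde{H}$; the paper's approach, on the other hand, exhibits the structural link between $\tilde{L}$ and $\tilde{H}$ and makes Theorem~\ref{T8} do double duty, which fits the paper's overall emphasis on the real-part operator and its eigenvalue estimates.
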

\begin{proof}
Note that in accordance with Theorem \ref{T7} the operators $\tilde{L},\tilde{H}$ are m-sectorial, the operator $\tilde{H}$  is self-adjoint. Applying Lemma \ref{L4}, Lemma \ref{L5}, Theorem 2.9 \cite[p.409]{firstab_lit:kato1966}, we get   $T_{t}=\tilde{L},\;T_{h}=\tilde{H},$ where $T_{t},T_{h}$ are the Fridrichs  extensions    of the operators $\tilde{L}, \tilde{H}$ (see \cite[p.409]{firstab_lit:kato1966}) respectively. Since in accordance
with  the   definition \cite[p.424]{firstab_lit:kato1966} the operator $\tilde{H}$ is a real part of the operator $\tilde{L},$ then due to  Theorem
\ref{T8}, Theorem 3.3 \cite[p.424]{firstab_lit:kato1966} the operator $\tilde{L}$ has a compact resolvent. Applying  Theorem 6.29 \cite[p.237]{firstab_lit:kato1966}, we conclude that the operator $\tilde{L}$ has a     discrete   spectrum.
\end{proof}
\begin{remark}
 It can easily be checked   that the Kypriaynov operator   is reduced  to the Marchaud operator in the one-dimensional case. At the same time,
 the results of this work are only true   for  the  dimensions   $ n\geq2.$
 However,   using   Corollary 1 \cite{firstab_lit:1kukushkin2017}, which establishes the strictly  accretive property  of the Marchaud  operator, we can apply the obtained  technique to  the one-dimensional case.
\end{remark}

\section{Conclusions}

  The paper  presents  the results  obtained in   the spectral theory of  fractional differential  operators. A  number of   propositions of  independent interest in the fractional calculus theory  are  proved, the  new concept   of a multidimensional directional fractional integral    is introduced.   The sufficient conditions of the representability   by the directional  fractional integral are formulated.  In particular,   the inclusion of  the Sobolev space to  the class  of   functions  that are  representable  by the directional  fractional integral  is established. Note that the technique of the  proofs, which is analogous to  the one-dimensional case, is of particular interest. It should be noted that the  extension of the Kipriyanov fractional differential  operator  is obtained, the  adjoint  operator is found, and the strictly accretive property is proved. These all create   a complete description  reflecting   qualitative properties of   fractional differential operators.    As the main   results, the following  theorems establishing the  properties of an uniformly elliptic operator with  the  Kipriyanov fractional derivative in the final term   are  proved:  the  theorem on the  strictly  accretive  property,  the theorem on the  sectorial  property,  the theorem on the m-accretive  property, the  theorem  establishing   a two-sided estimate for the  eigenvalues  and    discreteness of the spectrum of the  real component. Using  the  sesquilinear forms theory,  we  obtained the  major  theoretical results. We consider the proofs corresponding to the multidimensional case, however the reduction to the one-dimensional case is possible. For instance, the one-dimensional case is described in the paper  \cite {firstab_lit:2kukushkin2017}. We also note  that the results in this direction can be obtained  for the real axis. It is worth noticing  that the application of the  sesquilinear forms theory,  as a tool to study second order differential operators with a fractional derivative in the final term,  gives  an opportunity to analyze  the major role  of the senior term in the functional properties of the operator. This technique is novel  and can be used for  studying  the spectrum of  perturbed  fractional differential operators. Therefore,
  the idea of the proof may be of interest regardless of the results.

\subsection*{Acknowledgments}
  The author  thanks Professor Alexander L. Skubachevskii for valuable remarks and comments made during the  report,  which took place 31.10.2017 at Peoples' Friendship University of Russia, Moscow.

\newpage

\end{document}